\documentclass[12pt]{amsart}
\usepackage{}
\usepackage[all]{xy}
\usepackage{amsmath}
\usepackage{amsfonts}
\usepackage{amssymb}
\usepackage[all]{xy}

\usepackage{bbding}
\usepackage{txfonts}
\usepackage{amscd}

\usepackage[shortlabels]{enumitem}
\usepackage{ifpdf}
\ifpdf
 \usepackage[colorlinks,final,backref=page,hyperindex]{hyperref}
\else
  \usepackage[colorlinks,final,backref=page,hyperindex,hypertex]{hyperref}
\fi
\usepackage{tikz}
\usepackage[active]{srcltx}

%======================================================================
    %was    1, 1.5 for double sp
%======================================================================
%%standard setting
%\topmargin -0.3truein \textheight 8.4truein
%\oddsidemargin 0.2truein
%\evensidemargin 0.2truein \textwidth 440pt
%======================================================================
%%little larger standard setting: good setting
\topmargin -.8cm \textheight 22.8cm \oddsidemargin 0cm \evensidemargin -0cm \textwidth 16.3cm
%========================================================================================%%wide
%%lower setting for 1920x1080
%%\topmargin -.9cm \textheight 21cm \oddsidemargin 0cm \evensidemargin -0cm \textwidth 16.3cm
%%%%%%%%%%%%%%

\makeatletter

\newtheorem{thm}{Theorem}[section]
\newtheorem{lem}[thm]{Lemma}
\newtheorem{cor}[thm]{Corollary}
\newtheorem{pro}[thm]{Proposition}
\newtheorem{ex}[thm]{Example}
\newtheorem{rmk}[thm]{Remark}
\newtheorem{defi}[thm]{Definition}

\setlength{\baselineskip}{1.8\baselineskip}

\newcommand {\emptycomment}[1]{}
\newcommand {\yh}[1]{{\marginpar{*}\scriptsize\textcolor{purple}{yh: #1}}}
\newcommand {\tr}[1]{{\marginpar{*}\scriptsize\textcolor{red}{tr: #1}}}

\newcommand{\lon }{\,\rightarrow\,}
\newcommand{\be }{\begin{equation}}
\newcommand{\ee }{\end{equation}}

\newcommand{\mg}{\mathfrak g}

\newcommand{\g}{\mathfrak g}
\newcommand{\h}{\mathfrak h}
\newcommand{\al}{\alpha}
\newcommand{\WH}{\mathrm{WH}}

\newcommand{\huaB}{\mathcal{B}}%{{\mathcal{E}}}%{\mathcal{B}}
\newcommand{\huaS}{\mathcal{S}}
\newcommand{\huaA}{\mathcal{A}}%{{\mathcal{F}}}%{\mathcal{A}}
\newcommand{\huaL}{\mathcal{L}}

\newcommand{\huaF}{\mathcal{F}}
\newcommand{\huaG}{\mathcal{G}}

\newcommand{\Z}{\mathbb{Z}}

\newcommand{\huaU}{\mathcal{U}}

\newcommand{\huaW}{\mathcal{W}}

\newcommand{\huaC}{{\mathcal{C}}}%{\mathcal{C}}

\newcommand{\huaH}{\mathcal{H}}

\newcommand{\huaZ}{\mathcal{Z}}

\newcommand{\frkh}{\mathfrak h}

\newcommand{\frkk}{\mathfrak k}

\newcommand{\frkD}{\mathfrak D}

\newcommand{\frkH}{\mathfrak H}

\newcommand{\frkX}{\mathfrak X}

\newcommand{\NR}{\mathsf{NR}}

%{\mathbf{Diff}}
\newcommand{\half}{\frac{1}{2}}

\newcommand{\Courant}[1]{\left\llbracket  #1\right\rrbracket }

%{\frac{#1}{#2}}

\newcommand{\Id}{\rm{Id}}

\newcommand{\br}[1]{   [ \cdot,    \cdot  ]   }

\newcommand{\M}{\mathrm{M}}

\newcommand{\Hom}{\mathsf{Hom}}

\newcommand{\Rep}{\mathsf{Rep}}
\newcommand{\ARep}{\mathsf{ARep}}
\newcommand{\WRep}{\mathsf{WRep}}

\newcommand{\Der}{\mathsf{Der}}
\newcommand{\Lie}{\mathrm{Lie}}

\newcommand{\ob}{\mathrm{Ob}}

\newcommand{\Nij}{\mathsf{Nij}}

\newcommand{\diver}{\mathrm{div}}
\newcommand{\gl}{\mathfrak {gl}}
\newcommand{\sln}{\mathfrak {sl}}

\newcommand{\Mod}{\mathrm{Mod}}

\newcommand{\ad}{\mathsf{ad}}
\newcommand{\pr}{\mathrm{Pr}}

\newcommand{\Img}{\mathrm{Im}}

\newcommand{\K}{\mathbb{K}}

\def\D{\mathfrak{D}}

\def\L{\mathcal{L}}
\def\M{\mathcal{M}}

\def\G{\mathcal{G}}

\def\ot{\otimes}

\newcommand{\C}{\mathbb{C}}

\begin{document}

\title[Actions of monoidal categories and crossed homomorphisms]{Actions of monoidal categories and representations of Cartan type Lie algebras}

\author[Yufeng Pei]{Yufeng Pei}
\address{Department of Mathematics, Shanghai Normal University, Guilin Road 100,
Shanghai 200234, China}\email{pei@shnu.edu.cn}

\author{Yunhe Sheng}
\address{Department of Mathematics, Jilin University, Changchun 130012, Jilin, China}
\email{shengyh@jlu.edu.cn}

\author{Rong Tang}
\address{Department of Mathematics, Jilin University, Changchun 130012, Jilin, China}
\email{tangrong16@mails.jlu.edu.cn}

\author{Kaiming Zhao}
\address{Department of Mathematics, Wilfrid
Laurier University, Waterloo, ON, Canada N2L 3C5,  and School of Mathematical Sciences, Hebei Normal University, Shijiazhuang 050024, Hebei, China}
\email{kzhao@wlu.ca}

\vspace{-5mm}

\begin{abstract}
 Using crossed homomorphisms, we show that the category of weak representations (resp. admissible representations) of Lie-Rinehart algebras (resp. Leibniz pairs) is a left module category over the monoidal category of representations of Lie algebras. In particular, the corresponding bifunctor  of monoidal categories
is established  to give new weak representations (resp. admissible representations) of  Lie-Rinehart algebras   (resp. Leibniz pairs). This generalizes and unifies various existing constructions of representations of  many Lie algebras by using this new bifunctor.   We  construct some   crossed homomorphisms in different situations and use our  actions of monoidal categories to recover some known constructions of representations of various Lie algebras, also to obtain new representations for generalized Witt algebras  and their Lie subalgebras. The cohomology theory of crossed homomorphisms between Lie algebras is introduced and used to  study linear deformations of crossed homomorphisms.
\end{abstract}

\subjclass[2010]{17B10, 17B40, 17B56, 17B66,  53D17, 81R10}

\keywords{crossed homomorphism, deformation, cohomology, Lie-Rinehart algebra, Leibniz pair,  action of monoidal categories, Lie algebra of Cartan  type}

\maketitle
\vspace{-10mm}
\tableofcontents

\allowdisplaybreaks

\section{Introduction}

This paper aims to give a conceptual approach to unify various constructions of representations of certain Lie algebras and construct new representations of some Lie algebras using crossed homomorphisms, Lie-Rinehart algebras and Leibniz pairs.

\subsection{Representations of Cartan type Lie algebras} The representation theory of Lie algebras is of great importance due to its own overall completeness, and applications in mathematics and mathematical physics. The Cartan type Lie algebras, originally introduced and studied by Cartan,    consist of four classes of  infinite-dimensional simple Lie algebras of vector fields with   formal power series coefficients:   the Witt algebras, the divergence-free algebras, the Hamiltonian algebras, and the contact algebras.
 The representation theory of  Cartan type Lie algebras  was first studied by Rudakov \cite{Ru1,Ru2}. He showed that irreducible  continuous representations can be described explicitly as induced representations or quotients of induced representations. Later Shen \cite{Sh} studied graded modules of graded Lie algebras of Cartan type with   polynomial coefficients of positive characteristic. Larsson constructed a class of representations for the Witt algebras with   Laurent polynomial coefficients \cite{L3}.
More precisely,    Shen's   modules, called  mixed product, were constructed by certain monomorphism; while Larsson's   modules, named   conformal fields,  came from physics background.
 Many other authors have contributed very much to the theory along these approaches for the last few decades. In particular,    irreducible 	modules with finite-dimensional weight spaces over the Virasoro algebra  (universal central extension of   the Lie algebra   $W_1$  of vector fields on a circle) was classified by Mathieu in \cite{Mathieu}, while Billig and Futorny recently gave the classification of irreducible modules over the Witt algebras $W_n \, \, (n\geq2)$ with finite-dimensional weight spaces \cite{BF0}.
 Note that intrinsically there is a functor from the category of finite-dimensional irreducible  representations of finite dimensional simple Lie algebras to the category of representations of Cartan type Lie algebras among these works. Actually there should be some essential part that applies to all those constructions (even more) of complicated modules over some classes of Lie algebras (not only Cartan type Lie algebras) as a whole regardless of any specific feature exhibited in each particular case. From this point of view   it is of no surprise that earlier results in this direction due to many authors are fragments of the general theory. We  find a unifying  conceptual approach   generalizing Shen's construction.  This is one of the main purposes of the paper.

\subsection{Representations of  Lie-Rinehart algebras and Leibniz pairs}

Note that the above mentioned Cartan type Lie algebras are either Lie-Rinehart algebras, or Leibniz pairs.

Lie-Rinehart algebras, which was originally studied by Rinehart in \cite{Ri} in 1963,   arose from a wide variety of constructions in differential geometry, and  they have been introduced repeatedly into many areas under different terminologies, e.g. Lie pseudoalgebras. Lie-Rinehart algebras are the underlying structures of Lie algebroids. See  \cite{Ma} and references therein for more details. A Lie-Rinehart algebra is a quadruple $({A},\L,[\cdot,\cdot]_\L,\alpha)$, where $A$ is a commutative associative algebra, $\L$ is an $A$-module, $[\cdot,\cdot]_\L$ is a Lie bracket on $\L$ and $\alpha:\L\to\Der_\K(A)$ is an $A$-module homomorphism with some compatibility conditions involving the Lie brackets.  Lie-Rinehart algebras have been further investigated in many aspects \cite{CLP1,H1,H2,H5,Ma2,MM}. In particular, Rinehart constructed the  universal enveloping algebra of a Lie-Rinehart algebra \cite{Ri}. Huebschmann gave an alternative construction of the  universal enveloping algebra $\huaU(A, \L)$ of a Lie-Rinehart algebra $({A},\L,[\cdot,\cdot]_\L,\alpha)$ via the smash product, namely
	$
	\huaU(A, \L) =(A\# U(\L))/J,
	$
where $J$ is a certain two-sided ideal in $A\# U(\L)$, and showed that there is a one-one correspondence between representations of a Lie-Rinehart algebra and
	representations of its universal enveloping algebra \cite{H1}.  Representations of  Lie-Rinehart algebras are deeply related to the theory of $\mathcal{D}$-modules \cite{Pen}, which are   modules over the algebra $\mathcal{D}$ of linear differential operators on a manifold. Since the algebra  $\mathcal{D}$  is the universal enveloping algebra of the Lie-Rinehart algebra of vector fields, a $\mathcal{D}$-module is the same as  a module with a representation of the Lie-Rinehart algebra of vector fields. We introduce the notion of a weak representation of a Lie Rinehart algebra. The adjoint action is naturally a weak representation of a Lie-Rinehart algebra on itself. There is a one-to-one correspondence between weak representations of a Lie-Rinehart algebra and	representations of the smash product $A\# U(\L)$.

The notion of a Leibniz pair was originally introduced by Flato-Gerstenhaber-Voronov in \cite{FGV}, which consists of a $\K$-Lie algebra $(\huaS,[\cdot,\cdot]_\huaS)$ and a $\K$-Lie algebra homomorphism $\beta:\huaS\lon \Der_\K(A)$. In this paper we only consider the case that $A$ is a commutative associative algebra.
 A Leibniz pair was also studied by Winter \cite{Win}, and called a  Lie algop.  Leibniz pairs were further studied in \cite{HL,KS}. A Lie-Rinehart algebra $({A},\L,[\cdot,\cdot]_\L,\alpha)$ naturally gives rise to a Leibniz pair by  forgetting the $A$-module structure on $\L$. We introduce the notion of an admissible representation of a Leibniz pair. If $\WRep_{\K}(\huaL)$ denotes the category of weak representations of a Lie-Rinehart algebra $\huaL$, and $\ARep_{\K}(\huaS)$ denotes the category of admissible representations of a  Leibniz pair $\huaS$, then we have the following  category equivalence:
 $$
\WRep_{\K}(\huaL)\rightleftarrows \ARep_{\K}(\huaL),
$$
where the right-hand side $\huaL$ is considered as the underlying  Leibniz pair of a Lie-Rinehart algebra.
On the other hand, a Leibniz pair also gives rise to a Lie-Rinehart algebra $\huaS\otimes_\K A$, known as the action Lie-Rinehart algebra. We show that an admissible representation of a Leibniz pair can be naturally extended to a representation of the corresponding action Lie-Rinehart algebra. Actually we have the following  category equivalence:
$$
  \ARep_{\K}(\huaS)\rightleftarrows \Rep(\huaS\otimes_\K A),
$$
where $\Rep(\huaS\otimes_\K A)$ denotes the category of representations of the Lie-Rinehart algebra $\huaS\otimes_\K A$. See Remark \ref{rmk:equi} for more details about this equivalence.

  \subsection{Crossed homomorphisms}  The concept of a crossed homomorphism of Lie algebras was introduced in \cite{Lue} in the study of nonabelian extensions of Lie algebras in 1966. A special class of crossed homomorphisms are recently  called a differential operator of weight 1 in \cite{GK, GSZ}. A flat connection $1$-form of a trivial  principle bundle is naturally a crossed homomorphism.  To the best of our knowledge this concept has not been investigated for many years. Now we have to  use it in this paper. More precisely,
  by using crossed homomorphisms, we  show that the category of weak representations (resp. admissible representations) of Lie-Rinehart algebras (resp. Leibniz pairs) is a left module category over the monoidal category of representations of  Lie algebras. In particular, we obtain  bifunctors  among  categories of certain representations:
  $$
  F_{H}:\Rep_{\K}(\g)\times\WRep_{\K}(\L) \to \WRep_{\K}(\L),\quad \huaF_{H}:\Rep_{\K}(\h)\times \ARep_{\K}(\mathcal{S}) \to \ARep_{\K}(\mathcal{S}),
  $$
  which we call the actions of monoidal categories,  generalizing  Shen-Larsson constructions of representations for Cartan type Lie algebras.  Our construction sheds light on some difficult classification problems in representation theory of Lie algebras.

  We have seen the importance of crossed homomorphisms in our above construction. To better understand crossed homomorphisms and our actions of monoidal categories, we also study  deformations and cohomologies of crossed homomorphisms.   The deformation of algebraic structures began with the seminal
work of Gerstenhaber~\cite{Ge0,Ge} for associative
algebras and followed by its extension to Lie algebras by
Nijenhuis and Richardson~\cite{Nijenhuis-Richardson-1}. A suitable deformation theory of an algebraic structure needs to
follow certain general principle: on one hand, for a given
object with the algebraic structure, there should be a
differential graded Lie algebra whose Maurer-Cartan elements
characterize deformations of this object. On the other hand, there
should be a suitable cohomology so that the infinitesimal of a
formal deformation can be identified with a cohomology class.
  We successfully construct a differential graded Lie algebra such that   crossed homomorphisms are characterized  as Maurer-Cartan elements. The cohomology groups of  crossed homomorphisms are also defined to control their linear deformations.

\subsection{Outline of the paper}

In Section \ref{sec:ch}, we recall the concept of crossed homomorphisms between Lie algebras and show that there is a one-to-one correspondence between crossed homomorphisms and certain Lie algebra homomorphisms (Theorem \ref{twitst-iso}). This fact is the key ingredient in our later construction of the left module category.

In Section \ref{sec:wr}, we introduce the new concepts:  weak representations (resp. admissible representations) of Lie-Rinehart algebras (resp. Leibniz pairs).
Using crossed homomorphisms, we show that the category of weak representations (resp. admissible representations) of Lie-Rinehart algebras (resp. Leibniz pairs) is a left module category over the monoidal category of representations of Lie algebras. In particular, the corresponding bifunctor $\huaF_H$ which we call the action of monoidal categories,
is established  to give new representations of  Lie-Rinehart algebras   (resp. Leibniz pairs). See  Theorems \ref{bifunctor} and \ref{bifunctor'}. This generalizes and unifies various existing
 constructions of representations of  many Lie algebras by using this  new  bifunctor.

In Section \ref{sec:grsl},  to show the power of our action of monoidal categories $\huaF_H$  established in Section \ref{sec:wr}, we  construct some examples of crossed homomorphisms in different situations and using our action of monoidal categories to recover some known constructions representations of  various Lie algebras (see Section \ref{subsec:w}-\ref{hh}),  and to obtain new representations of generalized Witt algebras and their Lie subalgebras (See   Corollaries \ref{cor:gw}, \ref{cor:gs}, \ref{cor:gh}).
Certainly, our action of monoidal categories will be used to other situations  to give new simple representations of suitable Lie algebras.

In Section \ref{sec:deformation},   we  characterize crossed homomorphisms as Maurer-Cartan elements in a suitable differential graded Lie algebra and  introduce  the cohomology theory of crossed homomorphisms.
We use the cohomology theory of crossed homomorphisms that we established to study linear deformations of crossed homomorphisms, to prove that the  linear  deformation ${H}_t:={H}+t d_{\rho_{H}} (-{H} x)$   is trivial  for any Nijenhuis element $x$ (Theorem \ref{thm:trivial}).

We conclude our paper in Section \ref{Sect.6} by asking three questions.

As usual, we denote by $\mathbb{Z}$, $\mathbb{Z}_+$  and
$\mathbb{C}$ the sets of  all integers,
positive integers and complex numbers. All  vector spaces are over an algebraically closed field $\mathbb K$ of characteristic $0$.
\vspace{2mm}

\section{Crossed homomorphisms between Lie algebras}\label{sec:ch}

Let $(\g,[\cdot,\cdot]_\g)$ and $(\mathcal{\h},[\cdot,\cdot]_\h)$ be Lie algebras. We will denote by $\Der(\g)$ and $\Der(\h)$ the Lie algebras of derivations on $\g$ and $\h$ respectively.
A Lie algebra homomorphism $\rho:\g\lon\Der(\h)$ will be called  an {\bf action} of $\g$ on $\h$ in the sequel.

\begin{defi}\label{crossed-homo-defi}{\rm (\cite{Lue})}
  Let $\rho:\g\lon\Der(\h)$ be an action of  $(\g,[\cdot,\cdot]_\g)$  on     $(\mathcal{\h},[\cdot,\cdot]_\h)$.  A linear map ${H}:\g\to \h$ is called a {\bf crossed homomorphism with respect to the action $\rho$} if
\begin{eqnarray}\label{crossed-homo}
{H}[x,y]_\g=\rho(x)({H} y)-\rho(y)({H} x)+[{H} x,{H} y]_\h,\quad \forall x,y\in \g.
\end{eqnarray}
\end{defi}

\begin{rmk}
A crossed homomorphism from $\g$ to $\g$ with respect to the adjoint action  is also called a {\bf  differential operator of weight $1$}. See \cite{GK, GSZ} for more details.
\end{rmk}

\begin{ex}{\rm
 Let $P$ be a trivial  $G$-principle bundle over a differential manifold $M$, where $G$ is a Lie group. Let $\omega\in\Omega^1(M,\g)$ be a connection $1$-form, where $\g$ is the Lie algebra of $G$. Then $\omega$ is flat if and only if
 $d\omega+\half[\omega,\omega]_\g=0,$
 which is equivalent to
$$
 X\omega(Y)-Y\omega(X)-\omega([X,Y])+[\omega(X),\omega(Y)]_\g=0,\quad\forall X,Y\in\frkX(M).
 $$
 Therefore, a flat connection $1$-form, i.e. $\omega\in\Omega^1(M,\g)=\Hom(\frkX(M),\g\otimes C^\infty(M))$ satisfying the above equality,  is a crossed homomorphism from the Lie algebra of vector fields $\frkX(M)$ to the Lie algebra $\g\otimes C^\infty(M)$ with respect to the action $\rho$ given by
 $$
 \rho(X)(u\otimes f)=u\otimes X(f),\quad \forall X\in\frkX(M), u\in\g, f\in C^\infty(M).  \qed
 $$
 }
\end{ex}

\begin{ex}{\rm  If the action $\rho$ of $\g$ on $\h$ is zero, then any   crossed homomorphism from $\g$ to $\h$ is nothing but  a Lie algebra homomorphism.
If $\h$ is commutative, then any   crossed homomorphism from $\g$ to $\h$ is simply a derivation from $\g$ to $\h$ with respect to the representation $(\h;\rho)$.\qed
}
\end{ex}

 \begin{defi}\label{crossed-homo-homo}
      Let ${H}$ and ${H}'$ be crossed homomorphisms from $\g$ to $\h$ with respect to the action $\rho$. A {\bf homomorphism} from ${H}'$ to ${H}$ consists of two Lie algebra homomorphisms  $\phi_\g:\g\longrightarrow\g$ and $\phi_\h:\h\longrightarrow \h$ such that
      \begin{eqnarray}
        \label{homo-1}{H}\circ \phi_\g&=&\phi_\h\circ {H}',\\
        \label{homo-2}\phi_\h(\rho(x)u)&=&\rho(\phi_\g(x))(\phi_\h(u)),\quad\forall x\in\g, u\in \h.
      \end{eqnarray}
      In particular, if $\phi_\g$ and $\phi_\h$ are  invertible,  then $(\phi_\g,\phi_\h)$ is called an  {\bf isomorphism}  from ${H}'$ to ${H}$.
    \end{defi}

The following result can be also found in \cite{Lue}.
\begin{lem}
Let ${H}$ be a crossed homomorphism from $\g$ to $\h$ with respect to the action $\rho$.
  Define $\rho_{{H}}:\g\longrightarrow\gl(\h)$ by
  \begin{equation}\label{eq:newrep}
 \rho_{{H}}(x)u:=\rho(x)u+[{H} x,u]_\h,\;\;\forall x\in \g,u\in\h.
  \end{equation}
Then  $\rho_{{H}}$ is also an action of $\g$ on $\h$, i.e. $\rho_{{H}}:\g\lon\Der(\h)$ is a Lie algebra homomorphism.
  \end{lem}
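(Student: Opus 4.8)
The plan is to verify the two assertions directly from the definitions: first that each operator $\rho_{H}(x)$ is a derivation of $\h$, and then that $x\mapsto \rho_{H}(x)$ is a Lie algebra homomorphism into $\Der(\h)$. The only inputs are that $\rho$ is itself an action (so each $\rho(x)\in\Der(\h)$ and $\rho([x,y]_\g)=[\rho(x),\rho(y)]$), the Jacobi identity in $\h$, and crucially the defining identity \eqref{crossed-homo} of a crossed homomorphism.

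For the first assertion, I would fix $x\in\g$ and expand $\rho_{H}(x)[u,v]_\h$ using \eqref{eq:newrep}. The term $\rho(x)[u,v]_\h$ splits as a derivation because $\rho(x)\in\Der(\h)$, while the term $[{H} x,[u,v]_\h]_\h$ splits by the Jacobi identity (equivalently, $\ad_{{H} x}$ is an inner derivation). Recombining the two pieces yields exactly $[\rho_{H}(x)u,v]_\h+[u,\rho_{H}(x)v]_\h$. Note that this step does not use \eqref{crossed-homo} at all: the sum of a derivation and an inner derivation is always a derivation.

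The second assertion is the main computation and the only place where the crossed-homomorphism condition is needed. I would compute both sides of $\rho_{H}([x,y]_\g)=\rho_{H}(x)\rho_{H}(y)-\rho_{H}(y)\rho_{H}(x)$ applied to an arbitrary $u\in\h$. On the left, $\rho([x,y]_\g)u$ becomes $\rho(x)\rho(y)u-\rho(y)\rho(x)u$ since $\rho$ is a homomorphism, and $[{H}[x,y]_\g,u]_\h$ is rewritten via \eqref{crossed-homo} as $[\rho(x)({H} y),u]_\h-[\rho(y)({H} x),u]_\h+[[{H} x,{H} y]_\h,u]_\h$. On the right, expanding each $\rho_{H}(x)\rho_{H}(y)u$ by \eqref{eq:newrep} produces four summands apiece; after cancelling the matching pair $\rho(x)\rho(y)u-\rho(y)\rho(x)u$, the remaining task is to match the three left-hand terms against the six cross terms on the right.

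The hard (that is, bookkeeping-heavy) part is precisely this matching. I would dispose of it by rewriting each left-hand term with the tools already invoked: the derivation property of $\rho(x)$ turns $[\rho(x)({H} y),u]_\h$ into $\rho(x)[{H} y,u]_\h-[{H} y,\rho(x)u]_\h$ (and symmetrically for the $\rho(y)({H} x)$ term), while the Jacobi identity turns $[[{H} x,{H} y]_\h,u]_\h$ into $[{H} x,[{H} y,u]_\h]_\h-[{H} y,[{H} x,u]_\h]_\h$. Term by term these now coincide with the six cross terms from the right-hand expansion, completing the proof. The conceptual point worth recording is that \eqref{crossed-homo} contributes exactly the three terms whose derivation/Jacobi decomposition reproduces the commutator, which is why $\rho_{H}$ is a homomorphism precisely because ${H}$ is a crossed homomorphism.
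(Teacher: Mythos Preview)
Your proposal is correct and follows essentially the same approach as the paper: both verify $\rho_H(x)\in\Der(\h)$ and then establish $[\rho_H(x),\rho_H(y)]=\rho_H([x,y]_\g)$ by expanding via \eqref{eq:newrep}, using that $\rho(x)$ is a derivation, the Jacobi identity in $\h$, and the crossed-homomorphism identity \eqref{crossed-homo}. The only cosmetic difference is that the paper computes $[\rho_H(x),\rho_H(y)]u$ and simplifies it down to $\rho_H([x,y]_\g)u$ in one pass, whereas you expand both sides separately and match terms; the underlying manipulations are identical.
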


%\begin{proof}
%By the definition of $\rho_{{H}}$, for all $x\in\g$, we have $\rho_{{H}}(x)\in\Der(\h)$. By \eqref{crossed-homo},   we have
%\begin{eqnarray*}
%[\rho_{{H}}(x),\rho_{{H}}(y)]u
%&=&\rho(x)\big(\rho(y)u+[{H} y,u]_\h\big)+[{H} x,\rho(y)u+[{H} y,u]_\h]_\h\\
%&&-\rho(y)\big(\rho(x)u+[{H} x,u]_\h\big)-[{H} y,\rho(x)u+[{H} x,u]_\h]_\h\\
%                       &=&\rho([x,y]_\g)u+[\rho(x)({H} y),u]_\h-[\rho(y)({H} x),u]_\h+[[{H} x,{H} y]_\h,u]_\h\\
%                       &=&\rho_{{H}}([x,y]_\g)u,\quad \forall x,y\in\g,u\in\h.
%\end{eqnarray*}
%Thus,  $\rho_{{H}}$ is an action of $\g$ on $\h$.
%\end{proof}

We use $\g\ltimes_{\rho_{{H}}}\h $ and $\g\ltimes_{\rho}\h$ to denote the two semidirect products of $\g$ and $\h$ with respect to the actions $\rho_{H}$ and $\rho$ respectively. More precisely, we have
\begin{eqnarray*}
  ~[(x,u),(y,v)]_{\rho_{H}}&=&[x,y]_\g+\rho_{H}(x)v-\rho_{H}(y)u+[u,v]_\h,\\
   ~[(x,u),(y,v)]_{\rho}&=&[x,y]_\g+\rho(x)v-\rho(y)u+[u,v]_\h.
\end{eqnarray*}

\begin{thm}\label{twitst-iso}
Let ${H}:\g\to \h$ be a linear map   and $\rho:\g\lon\Der(\h)$ an action of $\g$ on $\h$.

\begin{itemize}\item [{\rm(a)}] Suppose that $\rho_H$ given by \eqref{eq:newrep} is an action of $\g$ on $\h$.
  Then the linear map $\hat{{H}}:\g\ltimes_{\rho_{{H}}}\h\longrightarrow\g\ltimes_{\rho}\h$ defined by
  \begin{equation}
   \hat{{H}}(x,u):=\big(x,{H} x+u\big),\;\;\forall x\in \g,u\in\h,
  \end{equation}
 is a Lie algebra isomorphism  if and only if $H$ is a crossed homomorphism from $\g$ to $\h$ with respect to the action $\rho$.
 \item[{\rm(b)}] $H$ is a crossed homomorphism from $\g$ to $\h$ with respect to the action $\rho$ if and only if the map
$\iota_H:\g\longrightarrow\g\ltimes_{\rho}\h$ defined by
  \begin{equation}
  \iota_H(x):=\big(x,{H} x\big),\;\;\forall x\in \g
  \end{equation}
is a Lie algebra homomorphism.\end{itemize}
  \end{thm}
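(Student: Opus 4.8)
The plan is to verify both equivalences by direct computation, isolating in each case the defining identity \eqref{crossed-homo} as the precise obstruction to the map in question being a morphism of Lie algebras.

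For part (b) I would start, since it is the cleaner statement. The map $\iota_H$ is manifestly linear, and its first component is automatically bracket-compatible (the inclusion of $\g$ as the first factor of $\g\ltimes_{\rho}\h$ is a homomorphism), so everything is governed by the second component. I would compute $[\iota_H(x),\iota_H(y)]_{\rho}=[(x,{H} x),(y,{H} y)]_{\rho}$, whose second component unwinds via the defining bracket of $\g\ltimes_{\rho}\h$ to $\rho(x)({H} y)-\rho(y)({H} x)+[{H} x,{H} y]_\h$, and compare it with the second component of $\iota_H([x,y]_\g)=\big([x,y]_\g,{H}[x,y]_\g\big)$. These agree for all $x,y\in\g$ precisely when \eqref{crossed-homo} holds, which is the asserted equivalence.

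For part (a) the first observation is that $\hat{{H}}$ is always a linear isomorphism, with explicit inverse $(x,u)\mapsto(x,u-{H} x)$, regardless of any bracket condition; hence ``$\hat{{H}}$ is a Lie algebra isomorphism'' reduces to ``$\hat{{H}}$ is a Lie algebra homomorphism'' (note also that, by the preceding Lemma, once $H$ is a crossed homomorphism $\rho_H$ is automatically an action, so the hypothesis on $\rho_H$ is only needed to make the source $\g\ltimes_{\rho_{{H}}}\h$ well-defined in the converse direction). The remaining task is to expand both $\hat{{H}}\big([(x,u),(y,v)]_{\rho_{{H}}}\big)$ and $[\hat{{H}}(x,u),\hat{{H}}(y,v)]_{\rho}$, using the definition \eqref{eq:newrep} to rewrite $\rho_{{H}}(x)v=\rho(x)v+[{H} x,v]_\h$ on the source side and expanding $[{H} x+u,{H} y+v]_\h$ into its four bilinear pieces on the target side.

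The main point—entirely bookkeeping rather than conceptual—is the cancellation: after the substitution, the terms $\rho(x)v$, $\rho(y)u$, $[u,v]_\h$, and the mixed brackets $[{H} x,v]_\h$ and $[{H} y,u]_\h$ (the latter appearing on the target side as $[u,{H} y]_\h=-[{H} y,u]_\h$ after using skew-symmetry of $[\cdot,\cdot]_\h$) pair off identically between the two sides. What survives is exactly ${H}[x,y]_\g$ from the source against $\rho(x)({H} y)-\rho(y)({H} x)+[{H} x,{H} y]_\h$ from the target, so the difference of the two expressions is precisely the defect of \eqref{crossed-homo}. Thus $\hat{{H}}$ is a homomorphism for all $x,u,y,v$ if and only if $H$ is a crossed homomorphism, and combined with the automatic linear bijectivity this completes part (a). I anticipate no genuine obstacle; the only care required is the faithful expansion of the target bracket and tracking which terms cancel.
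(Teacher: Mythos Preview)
Your proposal is correct and follows essentially the same direct-computation approach as the paper: expand both sides of the bracket compatibility, cancel the matching terms, and identify the surviving defect as precisely \eqref{crossed-homo}. The only organizational difference is that the paper proves (a) first and then obtains (b) by specializing to $u=v=0$, whereas you prove (b) independently before doing (a); either order works and the content is the same.
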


  \emptycomment{
\tr{I like the proposition (b) very much! In fact, there is a pair of adjoint functors $\ltimes:\g\mbox{-}\Mod\lon \Lie/\g$ with respect to (left adjoint functor) $\Omega:\Lie/\g\lon \g\mbox{-}\Mod$
                \begin{eqnarray*}
             \Hom_{\g\mbox{-}\Mod}(\Omega(\frkk),V)\cong\Der_{\g}(\frkk,V)\cong\Hom_{\Lie/\g}(\frkk,\g\ltimes V),
                \end{eqnarray*}
                where, $\kappa:\frkk \stackrel{}{\lon}\g\in \Lie/\g$ and $V\in \g\mbox{-}\Mod.$ We can deem proposition (b) is a non-abelian version of above adjoint functors. But there is a question: how to construct the left adjoint functor $\Omega$ of the non-abelian version (the module of Kahler differential forms)? There is a paper "Differentials for Lie algebras" which is written by Jochen Kuttler and Arturo Pianzola.}

\yh{So one more natural question, maybe related to Rong's question, is that what happens to the theory of modules if we consider the general nonabelian extensions. In this case $H$ is not a crossed homomorphism anymore. There will be a term $R:\wedge^2\g\lon \h$ such that $R(x,y)=\rho(x)({H} y)-\rho(y)({H} x)-{H}[x,y]_\g+[{H} x,{H} y]_\h$}

\yh{From geometric point of view,  a crossed homomorphism $H$ is a flat connection. In general, if we consider the general connection, there will be $R$ show up, which is the curvature. }

}
\begin{proof} (a). Clearly  $\hat{{H}}$ is an invertible linear map.
 For all $x,y\in\g,u,v\in\h$, we have
\begin{eqnarray*}
[\hat{{H}}(x,u),\hat{{H}}(y,v)]_{\rho}&=&[(x,{H} x+u),(y,{H} y+v)]_{\rho}\\
                                          &=&([x,y]_\g,\rho(x)({H} y+v)-\rho(y)({H} x+u)+[{H} x+u,{H} y+v]_\h)\\
 &=&\big([x,y]_\g,\rho(x)v-\rho(y)u+[{H} x,v]_\h-[{H} y,u]_\h+[u,v]_\h+[{H}x,Hy]_\g\\
 &&+\rho(x)({H} y)-\rho(y)({H} x)\big),\\
   \hat{{H}}[(x,u),(y,v)]_{\rho_{{H}}}&=&([x,y]_\g,{H}[x,y]_\g+\rho_{H}(x)v-\rho_{H}(y)u +[u,v]_\h)\\
                                           &=&([x,y]_\g,{H}[x,y]_\g+\rho(x)v-\rho(y)u+[{H} x,v]_\h-[{H} y,u]_\h+[u,v]_\h).
\end{eqnarray*}

Thus, $[\hat{{H}}(x,u),\hat{{H}}(y,v)]_{\rho}=\hat{{H}}[(x,u),(y,v)]_{\rho_{{H}}}$, if and only if
\eqref{crossed-homo} holds for $H$, which is equivalent to that $H$ is a crossed homomorphism from $\g$ to $\h$ with respect to the action $\rho$.

(b) follows from the proof of (a) by taking $u=v=0$.
\end{proof}

\begin{rmk}
  In fact, crossed homomorphisms correspond to split nonabelian extensions of Lie algebras. More precisely, we consider the following nonabelian extension of Lie algebras:
$$
0\lon \h\lon \g\oplus \h \lon \g\lon 0.
$$
A  section $s:\g\lon \g\oplus\h$ must be of the form $s(x)=(x,Hx), ~x\in\g$. Statement (b) says that $s$ is a Lie algebra homomorphism if and only if $H$ is a crossed homomorphism. Such an extension is called a split nonabelian extension. See \cite{Lue} for more details.

\end{rmk}

\section{Action of monoidal categories arising from Lie-Rinehart algebras and Leibniz pairs}\label{sec:wr}
In this section, first we introduce the notion of a weak representation of a Lie-Rinehart algebra, show that  the category of weak representations of Lie-Rinehart algebras is a left module category over the monoidal category of representations of Lie algebras by using crossed homomorphisms. Then we introduce the notion of an admissible representation of a Leibniz pair and obtain similar results. In particular, the corresponding  bifunctors are called the  actions of monoidal categories for Lie-Rinehart algebras and Leibniz pairs.

\subsection{Weak representations of Lie-Rinehart algebras}\hspace{2mm}

Let ${A}$ be a commutative associative algebra over $\K$. We denote by $\Der_{\K}({A})$ the set of $\K$-linear derivations of ${A}$, i.e.
$$
\Der_\K({A})=\{  D\in\text{End}_\K(A): D(ab)=D(a)b+aD(b),   \forall a,b\in{A}\}.
$$

\begin{defi}{\rm (\cite{Ri})}
  A {\bf Lie-Rinehart algebra} over ${A}$ is a $\K$-Lie algebra $(\L,[\cdot,\cdot]_{\L})$ together with an ${A}$-module structure on $\L$ and a map $\alpha:\L\to \Der_\K({A})$ (called the anchor) which is simultaneously a $\K$-Lie algebra and an ${A}$-module homomorphism such that
\begin{eqnarray*}
[x,ay]_\L=a[x,y]_\L+\alpha(x)(a)y,\quad \forall x,y\in \L,~a\in {A}.
\end{eqnarray*}
\end{defi}
We usually denote a Lie-Rinehart algebra over ${A}$ by $({A},\L,[\cdot,\cdot]_\L,\alpha)$ or simply by $\L$.

\begin{rmk}{  It is clear that a Lie-Rinehart algebra with $\alpha=0$ is exactly a Lie ${A}$-algebra.
}
\end{rmk}

\begin{ex} \label{example}
{\rm    $({A},\Der_\K({A}),[\cdot,\cdot]_C,\alpha=\Id)$ is a Lie-Rinehart algebra, where $ [\cdot,\cdot]_C$ is the commutator bracket.\qed}
\end{ex}

\begin{ex}{\rm
  Let ${M}$ be an ${A}$-module. Denote by $\gl_A({M})$ the set of ${A}$-module homomorphisms from ${M}$ to ${M}$. It is obvious that $(\gl_A({M}),[\cdot,\cdot]_C)$ is a Lie ${A}$-algebra.\qed
}
\end{ex}

\begin{ex}{\rm
 Let ${M}$ be an ${A}$-module. A {\bf first order differential operator} on ${M}$ is a pair $(D,\sigma)$, where $D:{M}\lon{M}$ is a $\K$-linear map and $\sigma=\sigma_{D}\in\Der_\K({A})$, satisfying the following compatibility condition:
 \begin{equation}D(am)=aD(m)+\sigma(a)m,\quad\forall a\in{A},m\in{M}.
 \end{equation}
 Denote by $\D({M})$ the set of first order differential operators on ${M}$. It is obvious that $\D({M})$ is an ${A}$-module. Define a bracket operation $[\cdot,\cdot]_C$ on $\D({M})$ by
 \begin{equation}
 [(D_1,\sigma_1),(D_2,\sigma_2)]_C:=(D_1\circ D_2-D_2\circ D_1,\sigma_1\circ \sigma_2-\sigma_2\circ \sigma_1),\quad\forall (D_1,\sigma_1),(D_2,\sigma_2)\in\D({M}),
 \end{equation}
and an ${A}$-module homomorphism $\pr:\D({M})\lon \Der_\K({A})$ by $\pr(D,\sigma)=\sigma$ for all $(D,\sigma)\in\D({M})$. Then   $({A},\D({M}),[\cdot,\cdot]_C,\alpha=\pr)$ is a Lie-Rinehart algebra.\qed
 }
\end{ex}

\begin{rmk}\label{rmk:ca}
 Let ${M}$ be an ${A}$-module. It is straightforward to see that we have a semidirect product commutative associative algebra $A\ltimes M$, where the multiplication is given by
 $$
 (a,m)\cdot (b,n)=(ab,an+bm),\quad \forall a,b\in A,~m,n\in M.
 $$
 Then  $(D,\sigma)$ is a first order differential operator on $M$ if and only if $(\sigma,D)$ is a derivation on the commutative associative algebra $A\ltimes M$. This result is the algebraic counterpart of the fact that a first order differential operator on a vector bundle $E$ can be viewed as a linear vector field on the dual bundle $E^*$. In fact, functions on the vector bundle $E^*\to N$ are generated by $C^\infty(N)$ and $\Gamma(E)$, while the latter are fibre linear functions on $E^*$. Since a first order differential operator maps a fibre linear function to a fibre linear function, so it is viewed as a linear vector field on $E^*.$
\end{rmk}

\begin{defi}
\begin{itemize}
\item[\rm(i)]
Let $({A},\L,[\cdot,\cdot]_\L,\alpha)$  and $({A},\L',[\cdot,\cdot]_{\L'},\alpha')$ be Lie-Rinehart algebras. A {\bf Lie-Rinehart weak homomorphism} is a $\K$-Lie algebra homomorphism  $f:\L \to \L'$   such that $\alpha'\circ f=\alpha$.

\item[\rm(ii)]
A Lie-Rinehart weak homomorphism $f$ is called a {\bf Lie-Rinehart homomorphism} if $f$ is also an ${A}$-module homomorphism, i.e. $f(ax)=af(x)$, for all $a\in{A}$ and $x\in \L.$
\end{itemize}
\end{defi}

Note that zero map from $\L$ to $\L'$ is not a Lie-Rinehart weak homomorphism if $\alpha\ne0$.

\begin{pro}\label{pro:composition}
Let $f_1:({A},\L_1,[\cdot,\cdot]_{\L_1},\alpha_1)\lon({A},\L_2,[\cdot,\cdot]_{\L_2},\alpha_2)$ and $f_2:({A},\L_2,[\cdot,\cdot]_{\L_2},\alpha_2)\lon({A},\L_3,[\cdot,\cdot]_{\L_3},\alpha_3)$ be two Lie-Rinehart weak homomorphisms. Then $f_2\circ f_1$ is a Lie-Rinehart weak homomorphism from $({A},\L_1,[\cdot,\cdot]_{\L_1},\alpha_1)$ to $({A},\L_3,[\cdot,\cdot]_{\L_3},\alpha_3)$.
\end{pro}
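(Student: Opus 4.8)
The plan is to verify directly that the composite $f_2\circ f_1:\L_1\lon\L_3$ satisfies the two defining conditions of a Lie-Rinehart weak homomorphism, namely that it is a $\K$-Lie algebra homomorphism and that $\alpha_3\circ(f_2\circ f_1)=\alpha_1$. Both conditions follow formally from the hypotheses on $f_1$ and $f_2$, so the proof amounts to assembling standard facts about composition of maps.

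First I would record that $f_2\circ f_1$ is a $\K$-Lie algebra homomorphism. Each $f_i$ is $\K$-linear and bracket-preserving by assumption, and the composite of two $\K$-linear maps is $\K$-linear while the composite of two bracket-preserving maps is again bracket-preserving; concretely, for $x,y\in\L_1$ one has
\begin{eqnarray*}
(f_2\circ f_1)[x,y]_{\L_1}
=f_2\big([f_1 x,f_1 y]_{\L_2}\big)
=[(f_2\circ f_1)x,(f_2\circ f_1)y]_{\L_3}.
\end{eqnarray*}

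Second, for the anchor compatibility I would use associativity of composition together with the two given relations $\alpha_2\circ f_1=\alpha_1$ and $\alpha_3\circ f_2=\alpha_2$, computing
\begin{eqnarray*}
\alpha_3\circ(f_2\circ f_1)=(\alpha_3\circ f_2)\circ f_1=\alpha_2\circ f_1=\alpha_1.
\end{eqnarray*}
Combining the two steps shows $f_2\circ f_1$ is a Lie-Rinehart weak homomorphism from $\L_1$ to $\L_3$. I do not anticipate any genuine obstacle here: the statement is a routine closure-under-composition check, and notably it never invokes the $A$-module structures on the $\L_i$. This is precisely the point of working with the \emph{weak} notion, and the same argument would additionally pass the $A$-linearity condition through composition if one wished to state the analogous closure result for honest Lie-Rinehart homomorphisms.
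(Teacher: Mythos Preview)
Your proof is correct and is exactly the routine verification the paper has in mind; the paper's own proof consists solely of the sentence ``This is easy to see.'' You have simply written out the details of that check.
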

\begin{proof} This is easy to see.
\end{proof}

We denote by $\WH(\huaL,\huaL')$ the set of weak homomorphisms from the Lie-Rinehart algebra $({A},\L,[\cdot,\cdot]_\L,\alpha)$  to $({A},\L',[\cdot,\cdot]_{\L'},\alpha')$. By Proposition \ref{pro:composition}, it is easy to see that $\WH(\huaL,\huaL)$ is a monoid.

\begin{defi}
\begin{itemize}
\item[\rm(i)]
A {\bf weak representation} of a Lie-Rinehart algebra $({A},\L,[\cdot,\cdot]_\L,\alpha)$ on an ${A}$-module ${M}$ is a Lie-Rinehart weak homomorphism $\rho:\L\lon\D({M})$. We denote a weak representation by $({M};\rho)$.

\item[\rm(ii)]
A weak representation $({M};\rho)$ is called a {\bf representation} if $\rho$ is also an ${A}$-module homomorphism, i.e. $\rho:\L\lon\D({M})$ is a Lie-Rinehart homomorphism.
\end{itemize}
\end{defi}

\begin{rmk} By a  weak representation of a Lie-Rinehart algebra $({A},\L,[\cdot,\cdot]_\L,\alpha)$ on an ${A}$-module ${M}$, it means  a $\K$-Lie algebra homomorphism $\rho:\L\lon\gl_\K({M})$ such that
$$\rho(x)(au)=a\rho(x)(u)+\alpha(x)(a)u,\quad \forall x\in\L, a\in A, u\in M.
$$
That is, $(D=\rho(x),\sigma=\alpha(x))$ is a first order differential operator  on ${M}$.
\end{rmk}

\begin{rmk}
  In \cite{H1},  Huebschmann showed that
 there is a one-one correspondence between representations of a Lie-Rinehart algebra and
 representations of its universal enveloping algebra $\huaU(A, \L) :=(A\# U(\L))/J$, where $J$ is a certain ideal of the smash product $A\# U(\L)$.
More explicitly,  it is known that $U(\L)$ is a Hopf algebra  and $A$ is  a $U(\L)$-module algebra.  Then the smash product $A\# U(\L)$ (see  \cite{Mon})  is a $\K$-vector space $A\otimes U(\L)$  with elements denoted by $a\#u$ ,  and with product defined for all $a,b\in A$  and $u,v\in U(\L)$ by
 $$
 (a\#u)(b\# v)=\sum a\alpha(u_{(1)})b\# u_{(2)}v,
 $$
 where  we use the standard Sweedler notation $\Delta(u)=\sum u_{(1)}\ot u_{(2)}$ for the coproduct $\Delta$.  The algebra $A\# U(\L)$ is also called the  Massey-Peterson
 algebra in \cite{H1}.  It is not hard to see that there is a one-to-one correspondence between weak representations of a Lie-Rinehart algebra and
 representations of the smash product $A\# U(\L)$.
\end{rmk}

\begin{ex}{\rm
Let $({A},\L,[\cdot,\cdot]_\L,\alpha)$ be a Lie-Rinehart algebra. Define $\ad:\huaL\lon\frkD(\L)$ by
$$
\ad_xy=[x,y]_\L,\quad \sigma_{\ad_x}=\alpha(x),\quad \forall x,y\in\L.
$$
Then $\ad$ is a weak representation of $\L$ on $\L$. Note that $\ad$ is generally not a representation of $\L$ on itself. \qed}
\end{ex}

\begin{defi}Let $({A},\L,[\cdot,\cdot]_\L,\alpha)$ be a Lie-Rinehart algebra,  $({M};\rho)$ and $({M}';\rho')$  two
 weak representation of $\L$. An ${A}$-module homomorphism $\phi:{M}\to{M}'$ is said to be a {\bf homomorphism of weak representations} if $\phi\circ\rho(x)=\rho'(x)\circ\phi$ for all $x\in\L.$
 \end{defi}

\begin{pro}
Let $\phi:({M};\rho)\lon({M}';\rho')$ and $\phi':({M}';\rho')\lon({M''};\rho'')$ be two homomorphisms of weak representations of $\L$. Then $\phi'\circ \phi$ is a homomorphism from  $({M};\rho)$ to $({M''};\rho'')$.
\end{pro}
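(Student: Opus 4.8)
The plan is to verify directly that $\phi'\circ\phi$ satisfies the two defining conditions of a homomorphism of weak representations, namely that it is an ${A}$-module homomorphism and that it intertwines the actions $\rho$ and $\rho''$. For the first condition, I would observe that $\phi$ and $\phi'$ are by hypothesis ${A}$-module homomorphisms, and that the composite of two ${A}$-module homomorphisms is again an ${A}$-module homomorphism, since ${A}$-linearity is preserved under composition; no computation beyond this is needed.

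For the intertwining condition, I would fix an arbitrary $x\in\L$ and compute $(\phi'\circ\phi)\circ\rho(x)$ by inserting parentheses so as to apply the two given intertwining relations in turn. Using first $\phi\circ\rho(x)=\rho'(x)\circ\phi$ and then $\phi'\circ\rho'(x)=\rho''(x)\circ\phi'$, one obtains
\[
(\phi'\circ\phi)\circ\rho(x)=\phi'\circ\big(\rho'(x)\circ\phi\big)=\big(\rho''(x)\circ\phi'\big)\circ\phi=\rho''(x)\circ(\phi'\circ\phi).
\]
Since $x\in\L$ was arbitrary, this shows that $\phi'\circ\phi$ intertwines $\rho$ and $\rho''$, so that $\phi'\circ\phi$ is indeed a homomorphism of weak representations from $({M};\rho)$ to $({M''};\rho'')$.

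There is no genuine obstacle in this argument: the statement is a formal consequence of the definitions, reducing to the associativity of composition together with the two intertwining hypotheses. The only point requiring care is the order in which the two relations are applied and the correct placement of parentheses; this is what guarantees the asserted monoidal/categorical structure of weak representations under composition.
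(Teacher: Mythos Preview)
Your proposal is correct and follows the same approach as the paper, which simply notes that the result is easy to see. You have just written out explicitly the two verifications (that $\phi'\circ\phi$ is an $A$-module map and that it intertwines $\rho$ and $\rho''$) that the paper leaves to the reader.
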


\begin{proof} This is easy to see.
\end{proof}

We usually denote by $M\stackrel {\phi}{\lon}M'$ a homomorphism between the weak representations $({M};\rho)$ and $({M}';\rho')$,  denote by {\bf $\WRep_{\K}(\L)$} the category of weak representations of a Lie-Rinehart algebra $({A},\L,[\cdot,\cdot]_\L,\alpha)$ and {$\Rep_{\K}(\g)$} the category of  representations of a $\K$-Lie algebra $(\g,[\cdot,\cdot]_\g)$. It is obvious that the category of representations of a Lie-Rinehart algebra  $({A},\L,[\cdot,\cdot]_\L,\alpha)$, denoted by {\bf $\Rep(\L)$}, is a full subcategory of the category {\bf $\WRep_{\K}(\L)$}. Please note    the subtle  difference of the  two categories  $\Rep_{\K}(\L)$  and $\Rep(\L)$.

\begin{defi}{\rm (\cite{CLP1})}
Let $({A},\L,[\cdot,\cdot]_\L,\alpha)$ be a Lie-Rinehart algebra and $(\G,[\cdot,\cdot]_\G)$ a Lie ${A}$-algebra. We say that $\L$ {\bf acts on} $\G$ if   a $\K$-Lie algebra homomorphism $\rho:\L\lon\Der_\K({\G})$ is given  such that
 $$\rho(ax)=a\rho(x),\quad\rho(x)(au)=a\rho(x)u+ \alpha(x)(a)u,\quad \forall a\in A, x\in \L, u\in\G.$$
 \end{defi}

Let $({A},\L,[\cdot,\cdot]_\L,\alpha)$ be a Lie-Rinehart algebra and $(\mathcal{G},[\cdot,\cdot]_\G)$ a Lie ${A}$-algebra on
which $\L$ acts via $\rho:\L\lon\Der_\K({\G})$.
 On the ${A}$-module $\L\oplus\mathcal{G}$, define a bracket operation $[\cdot,\cdot]_{\rho}$ by
$$
[(x,u),(y,v)]_{\rho}=([x,y]_\L,\rho(x)v-\rho(y)u+[u,v]_\G),\quad \forall x,y\in \L, u,v\in\mathcal{G},
$$
and define an ${A}$-module homomorphism $\tilde{\alpha}:\L\oplus\mathcal{G}\to\Der_\K({A})$ by
$$
\tilde{\alpha}(x,u)=\alpha(x),\quad  \forall x\in \L, u\in\mathcal{G}.
$$
Then  $({A},\L\oplus\G,[\cdot,\cdot]_{\rho},\tilde{\alpha})$ is a Lie-Rinehart algebra  \cite{CLP1}, which is called the {\bf semi-direct product} of $\L$ and $\G$, and denoted by $\L\ltimes_{\rho}\mathcal{G}$.

Note that the  Lie algebra $\L\ltimes_{\rho}\mathcal{G}$ acts on the Lie algebra $(\mathcal{G},[\cdot,\cdot]_\G)$ by
\begin{eqnarray}
\tilde{\rho}(x,u)v=\rho(x)v,\quad\forall x\in\mathcal{L},~u,v\in\huaG.
\end{eqnarray}
 Then using Theorem \ref{twitst-iso} (b) we can  easily verify the following result.
\begin{pro}
  With the above notations, the projection $\pr:\L\ltimes_{\rho}\mathcal{G}\lon\mathcal{G}$ is a crossed homomorphism with respect to the action $\tilde{\rho}$.
\end{pro}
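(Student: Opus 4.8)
The plan is to invoke Theorem \ref{twitst-iso}(b), exactly as the sentence preceding the statement suggests, so that the crossed homomorphism condition collapses to a single homomorphism check. I apply the theorem with the role of $\g$ played by the underlying $\K$-Lie algebra of $\L\ltimes_\rho\G$, the role of $\h$ by $\G$, the action (the $\rho$ of the theorem) by $\tilde{\rho}$, and the linear map $H$ by $\pr$. Here $\tilde{\rho}$ is indeed an action by hypothesis, since $\L\ltimes_\rho\G$ is stated to act on $\G$. The theorem then asserts that $\pr$ is a crossed homomorphism with respect to $\tilde{\rho}$ if and only if
$$\iota_{\pr}:\L\ltimes_\rho\G\lon (\L\ltimes_\rho\G)\ltimes_{\tilde{\rho}}\G,\qquad \iota_{\pr}(x,u)=\big((x,u),u\big),$$
is a $\K$-Lie algebra homomorphism. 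So the whole task reduces to checking that $\iota_{\pr}$ preserves brackets.

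To carry this out, I would first write down the bracket of the iterated semidirect product $(\L\ltimes_\rho\G)\ltimes_{\tilde{\rho}}\G$ using the semidirect-product formula recalled just before Theorem \ref{twitst-iso}, and then evaluate it on the pair $\iota_{\pr}(x,u)=((x,u),u)$ and $\iota_{\pr}(y,v)=((y,v),v)$. On the other side I would apply $\iota_{\pr}$ to $[(x,u),(y,v)]_\rho=([x,y]_\L,\ \rho(x)v-\rho(y)u+[u,v]_\G)$. Comparing the two, the $\L\ltimes_\rho\G$-component matches by construction, and everything reduces to a single identity in the outer $\G$-factor.

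The one point that makes this work — and hence the only thing worth checking carefully — is that $\tilde{\rho}(x,u)$ depends only on the $\L$-component, namely $\tilde{\rho}(x,u)=\rho(x)$. Consequently the outer $\G$-component produced by the iterated bracket, $\tilde{\rho}(x,u)v-\tilde{\rho}(y,v)u+[u,v]_\G$, collapses to $\rho(x)v-\rho(y)u+[u,v]_\G$, which is precisely $\pr[(x,u),(y,v)]_\rho$, i.e.\ the outer $\G$-component delivered by $\iota_{\pr}$ applied to the bracket. I do not anticipate any genuine obstacle here: the argument is bookkeeping in nested semidirect products, and the only hazard is confusing the two copies of $\G$. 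As a cross-check, one could instead bypass Theorem \ref{twitst-iso}(b) and verify the defining identity \eqref{crossed-homo} for $\pr$ directly, where the same cancellation makes both sides equal to $\rho(x)v-\rho(y)u+[u,v]_\G$.
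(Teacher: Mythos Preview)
Your proposal is correct and follows exactly the approach the paper indicates: invoking Theorem \ref{twitst-iso}(b) and reducing to the observation that $\tilde{\rho}(x,u)=\rho(x)$ makes the outer $\G$-component match $\pr[(x,u),(y,v)]_\rho$. The paper itself gives no further detail beyond the sentence ``using Theorem \ref{twitst-iso} (b) we can easily verify the following result,'' so your write-up is in fact more explicit than the paper's.
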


\subsection{Left module categories over monoidal categories}
\begin{pro}\label{important-twist}
Let $({A},\L,[\cdot,\cdot]_\L,\alpha)$ be a Lie-Rinehart algebra and $\rho $ an action of $\L$   on a Lie ${A}$-algebra $(\G,[\cdot,\cdot]_\G)$. For a crossed homomorphism ${H}:\huaL\lon \huaG$ between $\K$-Lie algebras, we define a $\K$-linear map $\iota_{H}: \L \to \L\ltimes_{\rho}\G$ by:
$$
\iota_{H}(x)=(x,{H} x),\quad \forall x\in \L.
$$
Then $\iota_{H}$ is a Lie-Rinehart  injective weak homomorphism from $\L$ to $\L\ltimes_{\rho}\G$.
\end{pro}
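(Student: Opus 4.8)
The plan is to obtain the $\K$-Lie algebra homomorphism property of $\iota_H$ as a direct consequence of Theorem \ref{twitst-iso}(b), and then to verify by hand the anchor compatibility and the injectivity required of a Lie-Rinehart weak homomorphism. The first step is to recognize that the hypothesis that $\L$ acts on $\G$ includes in particular that $\rho:\L\lon\Der_\K(\G)$ is a $\K$-Lie algebra homomorphism, and that the underlying $\K$-Lie algebra of the Lie-Rinehart semidirect product $\L\ltimes_{\rho}\G$ is precisely the $\K$-Lie algebra semidirect product $\g\ltimes_{\rho}\h$ of Theorem \ref{twitst-iso} with $\g:=\L$ and $\h:=\G$ viewed as $\K$-Lie algebras; indeed the bracket $[(x,u),(y,v)]_{\rho}=([x,y]_\L,\rho(x)v-\rho(y)u+[u,v]_\G)$ on the former is literally the semidirect-product bracket used in the latter. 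Since $H:\L\lon\G$ is assumed to be a crossed homomorphism with respect to $\rho$, Theorem \ref{twitst-iso}(b) then gives at once that $\iota_{H}(x)=(x,Hx)$ is a $\K$-Lie algebra homomorphism $\L\lon\L\ltimes_{\rho}\G$.

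It remains to check the two conditions specific to the Lie-Rinehart setting. For the anchor, using $\tilde{\alpha}(x,u)=\alpha(x)$ we get $\tilde{\alpha}(\iota_{H}(x))=\tilde{\alpha}(x,Hx)=\alpha(x)$ for all $x\in\L$, so $\tilde{\alpha}\circ\iota_{H}=\alpha$; together with the previous step this makes $\iota_{H}$ a Lie-Rinehart weak homomorphism. Injectivity is immediate, since if $\iota_{H}(x)=(x,Hx)$ vanishes then its first coordinate $x$ vanishes.

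I would then stress what fails, explaining why the conclusion is only a \emph{weak} homomorphism: in general $\iota_{H}$ is not an $A$-module map, because $\iota_{H}(ax)=(ax,H(ax))$ whereas $a\cdot\iota_{H}(x)=(ax,a\,Hx)$, and these agree only if $H(ax)=a\,Hx$, i.e.\ only if the $\K$-linear crossed homomorphism $H$ happens to be $A$-linear, which need not hold. There is no genuine obstacle in this proof; the only point requiring care is the first step, namely recognizing that Theorem \ref{twitst-iso}(b), though stated for bare $\K$-Lie algebras, applies verbatim here because the Lie-Rinehart semidirect product restricts on the nose to the $\K$-Lie algebra semidirect product appearing there.
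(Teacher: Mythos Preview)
Your proof is correct and follows essentially the same approach as the paper: invoke Theorem \ref{twitst-iso}(b) to get that $\iota_H$ is a $\K$-Lie algebra homomorphism, then verify $\tilde{\alpha}\circ\iota_H=\alpha$ directly from the definition of $\tilde{\alpha}$, and note injectivity from the first coordinate. Your closing remark explaining why $\iota_H$ is generally not $A$-linear is a helpful addition not present in the paper's proof.
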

\begin{proof}
By Theorem \ref{twitst-iso} (b), we know that   $\iota_{H}$ is a $\K$-Lie algebra monomorphism. Moreover, for all $x\in\L$, we have
$$
\tilde{\alpha}(\iota_{H}(x))=\tilde{\alpha}(x,{H} x)=\alpha(x),
$$
which implies that $\alpha=\tilde{\alpha}\circ\iota_{H}$. Thus, $\iota_{H}$ is a Lie-Rinehart injective weak  homomorphism.
\end{proof}

\begin{cor}\label{Pullback}
Let $({M};\rho)$ be a Lie-Rinehart weak representation of $({A},\L\ltimes_\rho\G,[\cdot,\cdot]_\rho,\tilde{\alpha})$ and ${H}$ a crossed homomorphism from   $\L$ to $\G$. Then $({M};\rho\circ\iota_{H})$ is a Lie-Rinehart weak representation of $({A},\L,[\cdot,\cdot]_\L,\alpha)$.
\end{cor}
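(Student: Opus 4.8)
The plan is to read the statement off as a purely formal consequence of two facts already in hand: first, that a weak representation of a Lie-Rinehart algebra on $M$ is, by definition, nothing but a Lie-Rinehart weak homomorphism into $\D(M)$; and second, that such weak homomorphisms compose, as established in Proposition \ref{pro:composition}. So I expect no new computation with the twisted bracket $[\cdot,\cdot]_\rho$ or the anchor $\tilde{\alpha}$ to be required — all of that work has already been absorbed into Proposition \ref{important-twist}.

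First I would recall that $(A,\D(M),[\cdot,\cdot]_C,\pr)$ is itself a Lie-Rinehart algebra over $A$, as noted when first-order differential operators were introduced, and that its anchor is $\pr$. Saying that $(M;\rho)$ is a weak representation of $\L\ltimes_\rho\G$ means precisely that $\rho\colon \L\ltimes_\rho\G \lon \D(M)$ is a Lie-Rinehart weak homomorphism, i.e. a $\K$-Lie algebra homomorphism satisfying $\pr\circ\rho=\tilde{\alpha}$. Next I would invoke Proposition \ref{important-twist}, which tells us that $\iota_{H}\colon \L \lon \L\ltimes_\rho\G$ is a Lie-Rinehart (injective) weak homomorphism, so in particular $\tilde{\alpha}\circ\iota_{H}=\alpha$.

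The conclusion then follows by composition. By Proposition \ref{pro:composition} the composite of two Lie-Rinehart weak homomorphisms is again one, so $\rho\circ\iota_{H}\colon \L \lon \D(M)$ is a Lie-Rinehart weak homomorphism: it is a $\K$-Lie algebra homomorphism, being a composite of two such, and its anchor condition is verified by the chase $\pr\circ(\rho\circ\iota_{H})=(\pr\circ\rho)\circ\iota_{H}=\tilde{\alpha}\circ\iota_{H}=\alpha$. By the very definition of a weak representation, this says exactly that $(M;\rho\circ\iota_{H})$ is a weak representation of $\L$ on $M$, which is the assertion.

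The only genuine subtlety, and the point where I would be most careful, is bookkeeping rather than mathematics: the symbol $\rho$ is overloaded, denoting both the fixed action of $\L$ on $\G$ used to build $\L\ltimes_\rho\G$ and the weak representation of that semidirect product on $M$. I would make sure that $\rho\circ\iota_{H}$ refers unambiguously to the latter, and that the anchor chased through each arrow is the correct one — $\alpha$ for $\L$, $\tilde{\alpha}$ for the semidirect product, and $\pr$ for $\D(M)$. Once the anchors are tracked correctly there is no obstacle, since the genuine content (that $\iota_{H}$ respects brackets and anchors) is supplied wholesale by Proposition \ref{important-twist}.
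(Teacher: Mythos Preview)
Your proof is correct and follows exactly the same route as the paper: invoke Proposition~\ref{important-twist} to see that $\iota_H$ is a Lie-Rinehart weak homomorphism, then Proposition~\ref{pro:composition} to compose with the weak homomorphism $\rho:\L\ltimes_\rho\G\to\D(M)$. The paper's version is a one-line citation of these two propositions; your added anchor chase and remark on the overloaded $\rho$ are helpful elaborations but not additional mathematical content.
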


\begin{proof}
By Propositions \ref{pro:composition} and \ref{important-twist}, we deduce that $\rho\circ\iota_{H}:\L\stackrel{\iota_{H}}\lon\L\ltimes_{\rho}\G\stackrel{\rho}\lon\D({M})$ is a Lie-Rinehart weak homomorphism.
\end{proof}

  Let $({A},\L,[\cdot,\cdot]_\L,\alpha)$ be a Lie-Rinehart algebra and $(\mg,[\cdot,\cdot]_\g)$ a $\K$-Lie algebra. Then $\G=\mg\ot_\K {A}$ is a Lie ${A}$-algebra, where  the ${A}$-module structure and the Lie bracket $[\cdot,\cdot]_\G$ is given by
  $$
  a(g\ot b)=g\ot ab,\quad [g\ot a,h\ot b]_\G=[g,h]_\mg\ot ab,\quad\forall a,b\in {A},~g,h\in \mg.
  $$
 Moreover, the Lie-Rinehart algebra $({A},\L,[\cdot,\cdot]_\L,\alpha)$ acts on the Lie ${A}$-algebra $\mg\ot_\K  {A}$ by $\alpha$ as follows:
\begin{equation}\label{eq:actional}
 \alpha(x)(g\ot a)=g\ot \alpha(x)(a),\quad \forall\  x\in \L, a\in{A}, g\in\mg.
\end{equation}
Consequently, we have the semidirect product Lie-Rinehart algebra $({A},\L\ltimes_\alpha (\mg\ot_\K{A}),[\cdot,\cdot]_\alpha,\tilde{\alpha})$.

Let $({A},\L,[\cdot,\cdot]_\L,\alpha)$ be a Lie-Rinehart algebra and $({M};\rho)$ a Lie-Rinehart weak representation of $({A},\L,[\cdot,\cdot]_\L,\alpha)$. Let $(\mg,[\cdot,\cdot]_\g)$ be a $\K$-Lie algebra   and $(V;\theta)$ a representation of $\g$. Then ${V}\ot_\K M$ has a natural ${A}$-module structure:
$$
a(v\ot m)=v\ot am,\quad \forall\   a\in{A}, v\in V, m\in{M}.
  $$
We define a $\K$-linear map $ \rho\boxplus\theta :\L\ltimes_\alpha (\mg\ot_\K {A})\lon\gl_\K({V}\ot_\K M)$ by
\begin{eqnarray*}
(\rho\boxplus\theta)(x,g\ot a)(v\ot m):=v\ot \rho(x)m+\theta(g)v\ot am
\end{eqnarray*}
for all $x\in\L,~a\in{A},~g\in\mg,~m\in{M},~v\in V$.

\begin{lem}\label{undeformed}
 With the above notations, $({V}\ot_\K M;\rho\boxplus\theta)$ is a Lie-Rinehart weak representation of the Lie-Rinehart algebra $({A},\L\ltimes_\alpha (\mg\ot_\K {A}),[\cdot,\cdot]_\alpha,\tilde{\alpha})$.
\end{lem}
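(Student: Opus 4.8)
The plan is to verify the two defining conditions of a weak representation directly from the formula for $\rho\boxplus\theta$, using only that $(M;\rho)$ is a weak representation of $\L$ (so $\rho$ is a $\K$-Lie algebra homomorphism satisfying $\rho(x)(am)=a\rho(x)m+\alpha(x)(a)m$) and that $(V;\theta)$ is a representation of $\g$ (so $\theta$ is a $\K$-Lie algebra homomorphism). By the reformulation of weak representations in the Remark following their definition, it suffices to check: (i) that for each $X\in\L\ltimes_\alpha(\mg\ot_\K A)$ the operator $(\rho\boxplus\theta)(X)$ is a first order differential operator on $V\ot_\K M$ whose symbol is $\tilde{\alpha}(X)$, equivalently $\pr\circ(\rho\boxplus\theta)=\tilde{\alpha}$; and (ii) that $\rho\boxplus\theta$ is a $\K$-Lie algebra homomorphism. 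It suffices to test both conditions on simple tensors $g\ot a$ and $v\ot m$, the general case following by $\K$-bilinearity.

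For (i), I would take $X=(x,g\ot b)$ and compute $(\rho\boxplus\theta)(X)\big(a(v\ot m)\big)$ using the $A$-module structure $a(v\ot m)=v\ot am$. Applying the identity $\rho(x)(am)=a\rho(x)m+\alpha(x)(a)m$ together with $b(am)=a(bm)$, the result reorganizes as $a\big(v\ot\rho(x)m+\theta(g)v\ot bm\big)+v\ot\alpha(x)(a)m$. Since $\tilde{\alpha}(x,g\ot b)=\alpha(x)$, this is precisely $a\,(\rho\boxplus\theta)(X)(v\ot m)+\tilde{\alpha}(X)(a)(v\ot m)$, so $(\rho\boxplus\theta)(X)$ lies in $\D(V\ot_\K M)$ with symbol $\alpha(x)=\tilde{\alpha}(X)$.

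For (ii), I would first expand the bracket in the semidirect product, using $\alpha(x)(h\ot b)=h\ot\alpha(x)(b)$, $\alpha(y)(g\ot a)=g\ot\alpha(y)(a)$ and $[g\ot a,h\ot b]_\G=[g,h]_\g\ot ab$, to obtain $[(x,g\ot a),(y,h\ot b)]_\alpha=([x,y]_\L,\,h\ot\alpha(x)(b)-g\ot\alpha(y)(a)+[g,h]_\g\ot ab)$. Applying $\rho\boxplus\theta$ and evaluating on $v\ot m$ yields four terms, in which $\rho([x,y]_\L)$, $\alpha(x)(b)$, $\alpha(y)(a)$ and $\theta([g,h]_\g)$ appear. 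Independently I would compute the commutator $[(\rho\boxplus\theta)(x,g\ot a),(\rho\boxplus\theta)(y,h\ot b)]$ on $v\ot m$ by composing the two operators in both orders and subtracting.

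The main obstacle, and the only place where care is needed, is the bookkeeping in this commutator. When $(\rho\boxplus\theta)(x,g\ot a)$ is applied to the term $\theta(h)v\ot bm$ produced by the other operator, the weak representation identity forces $\rho(x)(bm)=b\rho(x)m+\alpha(x)(b)m$, and it is exactly the derivation part $\alpha(x)(b)m$ that reproduces the anchor term $h\ot\alpha(x)(b)$ coming from the semidirect-product bracket. I would then observe that the genuinely mixed terms $\theta(g)v\ot a\rho(y)m$ and $\theta(h)v\ot b\rho(x)m$ occur with opposite signs in the two orders and cancel; that the pure first-slot terms combine via $[\rho(x),\rho(y)]=\rho([x,y]_\L)$; and that the pure second-slot terms combine via $[\theta(g),\theta(h)]=\theta([g,h]_\g)$, where commutativity of $A$ gives $(ab)m=(ba)m$. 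Matching the surviving terms against the four terms from the first computation yields the homomorphism identity, which together with (i) shows that $(V\ot_\K M;\rho\boxplus\theta)$ is a Lie-Rinehart weak representation.
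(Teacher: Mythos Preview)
Your proposal is correct and follows essentially the same route as the paper: verify directly that $\rho\boxplus\theta$ is a $\K$-Lie algebra homomorphism via the explicit commutator computation (where the weak-representation identity $\rho(x)(bm)=b\rho(x)m+\alpha(x)(b)m$ produces the anchor terms), and separately check that each $(\rho\boxplus\theta)(x,g\ot b)$ lies in $\D(V\ot_\K M)$ with symbol $\tilde{\alpha}(x,g\ot b)=\alpha(x)$. The paper merely reverses the order of these two verifications and writes out all terms of the commutator explicitly rather than narrating the cancellations.
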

\begin{proof}
Since $\rho:\L\lon\D({M})$ and $\theta:\g\lon\gl(V)$ are $\K$-Lie algebra homomorphisms. For all $a,b\in{A},x,y\in\L,g,h\in\mg, m\in {M},v\in V$, we have
\begin{eqnarray*}
   &&\Big([(\rho\boxplus\theta)(x,g\ot a),(\rho\boxplus\theta)(y,h\ot b)]_C-(\rho\boxplus\theta)([(x,g\ot a),(y,h\ot b)]_\alpha)\Big)(v\ot m)\\
   &=&(\rho\boxplus\theta)(x,g\ot a)\Big(v\ot \rho(y)m+\theta(h)v\ot bm \Big)-(\rho\boxplus\theta)(y,h\ot b)\Big(v\ot \rho(x)m+\theta(g)v\ot am\Big)\\
   &&-(\rho\boxplus\theta)\Big([x,y]_\L,h\ot \alpha(x)(b)-g\ot \alpha(y)(a)+[g,h]_\g\ot ab\Big)(v\ot m)\\
   &=& v\ot \rho(x)(\rho(y)m)+\theta(g)v\ot a(\rho(y)m)+\theta(h)v\ot \rho(x)(bm)+ \theta(g)(\theta(h)v)\ot a(bm)\\
   &&-v\ot \rho(y)(\rho(x)m)-\theta(h)v\ot b(\rho(x)m)-\theta(g)v\ot \rho(y)(am)- \theta(h)(\theta(g)v)\ot b(am)\\
   &&-v\ot\rho([x,y]_\L)m -\theta(h)v\ot\alpha(x)(b)m+\theta(g)v\ot \alpha(y)(a)m-\theta([g,h]_\g)v\ot (ab)m\\
   &=&0.
  \end{eqnarray*}
Therefore, we deduce that $\rho\boxplus\theta$ is a $\K$-Lie algebra homomorphism.

Furthermore, by $\rho(x)\in\D({M})$, we have
\begin{eqnarray*}
 (\rho\boxplus\theta)(x,g\ot b)\Big(a(v\ot m)\Big)&=&(\rho\boxplus\theta)(x,g\ot b)(v\ot am)\\
 &=&v\ot \rho(x)(am)+ \theta(g)v\ot a(bm)\\
 &=&v\ot \Big(a\rho(x)(m)+\alpha(x)(a)m\Big)+\theta(g)v \ot a(bm)\\
 &=&a\Big( (\rho\boxplus\theta)(x,g\ot b)(v\ot m)\Big)+\alpha(x)(a)(v\ot m),
\end{eqnarray*}
which implies that $(\rho\boxplus\theta)(x,g\ot b)\in\D(V\ot_\K{M})$ and $\tilde{\alpha}=\pr\circ(\rho\boxplus\theta)$.

Therefore, $\rho\boxplus\theta:\L\ltimes_\alpha (\mg\ot_\K {A})\lon\D(V\ot_\K{M})$ is a Lie-Rinehart weak homomorphism.
\end{proof}

\begin{cor}\label{iterated}
Let $({M};\rho)$ be a Lie-Rinehart representation of $({A},\L,[\cdot,\cdot]_\L,\alpha)$ and $(V;\theta)$ a representation of $\g$. Then $({V}\ot_\K M;\rho\boxplus\theta)$ is a Lie-Rinehart representation of $\L\ltimes_\alpha (\mg\ot_\K {A})$.
\end{cor}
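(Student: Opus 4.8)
The plan is to piggyback on Lemma \ref{undeformed}, which already does the heavy lifting. That lemma, proved under the weaker hypothesis that $({M};\rho)$ is merely a weak representation, establishes that $\rho\boxplus\theta$ is a $\K$-Lie algebra homomorphism compatible with the anchors, i.e.\ a Lie-Rinehart weak homomorphism $\L\ltimes_\alpha(\mg\ot_\K A)\to\D(V\ot_\K M)$. By the definitions, the weak representation $({V}\ot_\K M;\rho\boxplus\theta)$ is a genuine representation exactly when $\rho\boxplus\theta$ is in addition an $A$-module homomorphism, so the only thing left to verify is the $A$-linearity
$$
(\rho\boxplus\theta)\big(a\cdot(x,g\ot b)\big)=a\cdot\big((\rho\boxplus\theta)(x,g\ot b)\big),
$$
for all $a,b\in A$, $x\in\L$, $g\in\mg$, where the left-hand $A$-action is the component-wise one $a\cdot(x,g\ot b)=(ax,g\ot ab)$ on $\L\ltimes_\alpha(\mg\ot_\K A)$ and the right-hand action is the one on the first-order differential operators $\D(V\ot_\K M)$.

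First I would locate precisely where the strengthened hypothesis is used. In Lemma \ref{undeformed} the map $\rho$ was only $\K$-linear, whereas in the present corollary $({M};\rho)$ is assumed to be a representation, which by definition means $\rho:\L\to\D(M)$ is itself an $A$-module homomorphism, i.e.\ $\rho(ax)=a\rho(x)$ for all $a\in A$, $x\in\L$. This single identity is exactly what is absent in the weak setting, and I expect it to be the one and only ingredient that promotes the weak representation to a representation.

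Then I would check the displayed identity by evaluating both sides on a generator $v\ot m$. Expanding the left-hand side with the defining formula for $\rho\boxplus\theta$ and substituting $\rho(ax)m=a(\rho(x)m)$ gives $v\ot a(\rho(x)m)+\theta(g)v\ot (ab)m$. For the right-hand side, recall that the $A$-module structure on $V\ot_\K M$ is $a(v\ot m)=v\ot am$, and that the induced structure on $\D(V\ot_\K M)$ sends a differential operator $D$ to $a\cdot D$ with $(a\cdot D)(v\ot m)=a\big(D(v\ot m)\big)$; applying this to $(\rho\boxplus\theta)(x,g\ot b)$ yields again $v\ot a(\rho(x)m)+\theta(g)v\ot (ab)m$. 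The two expressions agree, so $\rho\boxplus\theta$ is $A$-linear. Together with Lemma \ref{undeformed} this shows $\rho\boxplus\theta$ is a Lie-Rinehart homomorphism, hence $({V}\ot_\K M;\rho\boxplus\theta)$ is a representation of $\L\ltimes_\alpha(\mg\ot_\K A)$.

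I do not anticipate a genuine obstacle here; the corollary is essentially a bookkeeping refinement of the lemma. The only subtlety is to keep the several $A$-module structures straight—on $\L\ltimes_\alpha(\mg\ot_\K A)$, on $V\ot_\K M$, and on $\D(V\ot_\K M)$—and to recognize that it is precisely the representation hypothesis on $\rho$ (as opposed to a mere weak-representation hypothesis) that supplies the needed $A$-linearity.
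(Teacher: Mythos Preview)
Your proposal is correct and follows essentially the same approach as the paper: invoke Lemma \ref{undeformed} for the weak-representation part, then verify the one remaining condition, $A$-linearity of $\rho\boxplus\theta$, by a direct computation on generators using the hypothesis $\rho(ax)=a\rho(x)$. The paper's proof is precisely this computation (with the scalar denoted $b$ rather than $a$), presented a bit more tersely.
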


\begin{proof}
Since $\rho$ is an ${A}$-module homomorphism, we have
\begin{eqnarray*}
&&\Big((\rho\boxplus\theta)(b(x,g\ot a))-b(\rho\boxplus\theta)(x,g\ot a)\Big)(v\ot m)\\
&=&v\ot \rho(bx)m+\theta(g)v\ot (ba)m-b\Big(v\ot \rho(x)m+\theta(g)v\ot am\Big)\\
&=&0, \quad \forall a,b\in{A},~x\in\L,~g\in\g,~m\in{M},~v\in V.
\end{eqnarray*}
Thus,   $\rho\boxplus\theta$ is also an ${A}$-module homomorphism.
\end{proof}

Before we give the main result of the paper, we recall the notions of a monoidal category and a left module category over a monoidal category.
\begin{defi}{\rm (\cite{Etingof})}
A {\bf monoidal category} is a $6$-tuple $(\huaC,\otimes, a,\mathbf{1},l,r)$ consists of the following data:
\begin{itemize}
\item A category $\huaC$;
\item A bifunctor $\otimes:\huaC\times \huaC\to \huaC$ called the monoidal product;
\item A natural isomorphism $a:\otimes\circ(\otimes\times \Id_\huaC)\lon\otimes\circ(\Id_\huaC\times \otimes)$ called the associativity isomorphism;
\item An object $\mathbf{1}\in \ob(\huaC)$ called the unit object;
\item A natural isomorphism $l:\otimes\circ(\mathbf{1}\times\Id_\huaC)\lon\Id_\huaC$ called the left unit isomorphism and a natural isomorphism $r:\otimes\circ(\Id_\huaC\times\mathbf{1})\lon\Id_\huaC$ called the right unit isomorphism.
\end{itemize}
These data satisfy the following two axioms:

$(1)$ the {\bf pentagon axiom}: the pentagon diagram
$$
\xymatrix{
&&((W\otimes X)\otimes Y)\otimes Z\ar[lld]_{a_{W\otimes X,Y,Z}}\ar[rrd]^{a_{W,X,Y}\otimes{\Id}_Z} &&\\
(W\otimes X)\otimes (Y\otimes Z)\ar[d]_{a_{W,X,Y\otimes Z}}&&&&(W\otimes (X\otimes Y))\otimes Z\ar[d]^{a_{W,X\otimes Y,Z}}\\
W\otimes( X\otimes (Y\otimes Z))&&&&W\otimes ((X\otimes Y)\otimes Z)\ar[llll]_{{\Id}_W\otimes a_{X,Y,Z}}\\
}
$$
commutes for the all $W,X,Y,Z\in\ob(\huaC)$.

\emptycomment{
$$
\xymatrix{
&&((W\otimes X)\otimes Y)\otimes Z\ar[lld]_{a_{W\otimes X,Y,Z}}\ar[rrd]^{a_{W,X,Y}\otimes{\Id}_Z} &&\\
(W\otimes X)\otimes (Y\otimes Z)\ar[d]_{a_{W,X,Y\otimes Z}}&&&&((W\otimes X)\otimes Y)\otimes Z\ar[d]_{a_{W,X\otimes Y,Z}}\ar[d]_{a_{W,X\otimes Y,Z}}\\
W(\otimes X\otimes (Y\otimes Z))&&&&W\otimes (X\otimes Y)\otimes Z)\ar[llll]_{{\Id}_W\otimes a_{X,Y,Z}}\\
}
$$
}

$(2)$ the   {\bf triangle axiom}: the triangle diagram
$$
\xymatrix{
    (X\otimes\mathbf{1}) \otimes Y\ar[rr]^{a_{X,\mathbf{1},Y}}\ar@/_/[dr]_{r_X\otimes {\Id}_Y} & & X\otimes(\mathbf{1} \otimes Y ) \ar@/^/[dl]^{{{\Id}_X}\otimes l_Y} \\
     & X\otimes Y &
    }
$$
commutes for the all $X,Y\in\ob(\huaC)$.

The monoidal category $\huaC$ is  {\bf strict} if the associativity isomorphism, left unit isomorphism and right unit isomorphism $a,l,r$ are all identities.
\end{defi}

\begin{ex}{\rm Let $\mathcal{C}$ be a category and $\mathcal{E}nd(\mathcal{C})$ the category of endofunctors (the functors from $\mathcal{C}$ into itself). Then $\mathcal{E}nd(\mathcal{C})$ is
a strict monoidal category with the composition of functors as the monoidal product and the identity functor as the unit object of this category.
}
\end{ex}

\begin{ex}{\rm  The category of   representations
$\Rep_{\K}(\g)$ of a $\K$-Lie algebra  $\g$ is a monoidal category: the monoidal product of $(V_1;\theta_1)$ and $(V_2;\theta_2)$ is defined
by
$$
(V_1;\theta_1)\otimes (V_2;\theta_2):=(V_1\otimes V_2;\theta_1\otimes {\Id}_{V_2}+{\Id}_{V_1}\otimes \theta_2),
$$
and the unit object $\mathbf{1}$ is the 1-dimensional trivial representation $(\K;0)$ of $\g$. Moreover, the associativity isomorphism $$
a_{(V_1;\theta_1),(V_2;\theta_2),(V_3;\theta_3)}:((V_1;\theta_1)\otimes (V_2;\theta_2))\otimes (V_3;\theta_3)\lon (V_1;\theta_1)\otimes ((V_2;\theta_2)\otimes (V_3;\theta_3))
$$
is defined by
\begin{eqnarray}
a_{(V_1;\theta_1),(V_2;\theta_2),(V_3;\theta_3)}\Big((v_1\otimes v_2)\otimes v_3\Big):=v_1\otimes (v_2\otimes v_3),\,\,\,\,\forall v_i\in V_i,~i=1,2,3,
\end{eqnarray}
the left unit isomorphism $l_{(V;\theta)}$ and the right unit isomorphism $r_{(V;\theta)}$ are defined by
\begin{eqnarray}
l_{(V;\theta)}(k\otimes v):=kv,\,\,\,\,r_{(V;\theta)}(v\otimes k):=kv,\,\,\,\,\forall k\in\K,~v\in V.\qquad \qed
\end{eqnarray}
}
\end{ex}

\begin{defi}\label{left-module-cate}{\rm (\cite{Etingof})}
Let $(\huaC,\otimes, a,\mathbf{1},l,r)$ be a monoidal category. A {\bf left module category} over $\huaC$ is a category $\M$ equipped with a bifunctor $\otimes^{\M}:\huaC\times \M\lon\M$, a natural isomorphism $a^{\M}:\otimes^{\M}\circ(\otimes\times {\Id}_\M)\lon\otimes^{\M}\circ({\Id}_\huaC\times \otimes^{\M})$, and a natural isomorphism $l^\M:\otimes^\M\circ(\mathbf{1}\times\Id_\M)\lon\Id_\M$ such that
the {\bf pentagon diagram}
$$
\xymatrix{
&&((X\otimes Y)\otimes Z)\otimes^\M M\ar[lld]_{a^\M_{X\otimes Y,Z,M}}\ar[rrd]^{a_{X,Y,Z}\otimes^\M{\Id}_M} &&\\
(X\otimes Y)\otimes^\M(Z\otimes^\M M)\ar[d]_{a^\M_{X,Y,Z\otimes^\M M}}&&&&(X\otimes (Y\otimes Z))\otimes^\M M\ar[d]^{a^\M_{X,Y\otimes Z,M}}\\
X\otimes^\M( Y\otimes^\M (Z\otimes^\M M))&&&&X\otimes^\M ((Y\otimes Z)\otimes^\M M)\ar[llll]_{{\Id}_X\otimes^\M a^\M_{Y,Z,M}}\\
}
$$
and the {\bf triangle diagram}
$$
\xymatrix{
    (X\otimes\mathbf{1}) \otimes^\M M\ar[rr]^{a^\M_{X,\mathbf{1},M}}\ar@/_/[dr]_{r_X\otimes^\M {\Id}_M} & & X\otimes^\M(\mathbf{1} \otimes^\M M ) \ar@/^/[dl]^{{{\Id}_X}\otimes^\M l^\M_M} \\
     & X\otimes^\M M &
    }
$$
commute for the all $X,Y,Z\in\ob(\huaC),~M\in\ob(\M)$.
\end{defi}

\begin{ex}{\rm
Any monoidal category $(\huaC,\otimes, a,\mathbf{1},l,r)$ is a left module category over itself. More precisely, we set $\otimes^\huaC=\otimes,~a^\huaC=a,~l^\huaC=l$. This left module category can be considered as a categorification of the regular representation of an associative algebra.\qed
}
\end{ex}

Let $({A},\L,[\cdot,\cdot]_\L,\alpha)$ be a Lie-Rinehart algebra and $\mg$ a $\K$-Lie algebra. Let ${H}$ be a crossed homomorphism from the $\K$-Lie algebra $\L$ to $\g\ot_\K{A}$ with respect to the action $\alpha$ given by \eqref{eq:actional}. For all $x\in\L$, we set $Hx=\sum_i x_i^\g\otimes x^A_i$ or $Hx=x_i^\g\otimes x^A_i$ for simplicity.

By Corollary \ref{Pullback} and Lemma \ref{undeformed}, our main theorem can be stated as follows:

\begin{thm}\label{bifunctor} Let $({A},\L,[\cdot,\cdot]_\L,\alpha)$ be a Lie-Rinehart algebra,  $(\mg,[\cdot,\cdot]_\g)$  a $\K$-Lie algebra.  Then any crossed homomorphism $H:\L\lon\g\ot_\K{A}$ induces a left module category structure of the category of weak representations   {$\WRep_{\K}(\L)$} over the monoidal category {$\Rep_{\K}(\g)$}. More precisely, the left module structure is given by
\begin{itemize}
  \item the bifunctor $F_{H}:\Rep_{\K}(\g)\times\WRep_{\K}(\L) \to \WRep_{\K}(\L)$, which is defined  on the set of objects and on the set of morphisms respectively by
\begin{eqnarray}
F_{H}\Big((V;\theta),({M};\rho)\Big)&=&({V}\ot_\K M;(\rho\boxplus\theta)\circ\iota_{H}),\\
F_{H}(V\stackrel{\psi}{\lon}V',M\stackrel{\phi}{\lon}M')&=&V\otimes M\stackrel{\psi\otimes \phi}{\lon}V'\otimes M',
\end{eqnarray}
 for $(V;\theta),(V';\theta')\in \Rep_{\K}(\g),~({M};\rho),({M}';\rho')\in\WRep_{\K}(\L),$ representation homomorphism $V\stackrel{\psi}{\lon}V'$ of the $\K$-Lie algebra $(\mg,[\cdot,\cdot]_\g)$ and weak representation homomorphism $M\stackrel{\phi}{\lon}M'$ of the Lie-Rinehart algebra $({A},\L,[\cdot,\cdot]_\L,\alpha)$;

\item the natural isomorphism $$a_{(V_1;\theta_1),(V_2;\theta_2),(M;\rho)}:F_H\big((V_1;\theta_1)\otimes(V_2;\theta_2), (M;\rho)\big)\lon F_H\Big((V_1;\theta_1),F_H\big((V_2;\theta_2), (M;\rho)\big)\Big),$$ which is defined by
\begin{eqnarray}
a_{(V_1;\theta_1),(V_2;\theta_2),(M;\rho)}((v_1\otimes v_2)\otimes m)=v_1\otimes (v_2\otimes m),
\end{eqnarray}
\item  the natural isomorphism $l_{({M};\rho)}:F_H\big((\K;0),({M};\rho))\lon ({M};\rho)$, which is defined by
\begin{eqnarray}
l_{({M};\rho)}(k\otimes m)=km.
\end{eqnarray}
\end{itemize}
\end{thm}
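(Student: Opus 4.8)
The plan is to verify, one by one, the data and axioms required by Definition \ref{left-module-cate}, taking $\otimes^\M = F_H$, $a^\M = a$, and $l^\M = l$. The well-definedness of $F_H$ on objects is already in hand: given $(V;\theta)\in\Rep_\K(\g)$ and $(M;\rho)\in\WRep_\K(\L)$, Lemma \ref{undeformed} shows that $(V\otimes_\K M;\rho\boxplus\theta)$ is a weak representation of $\L\ltimes_\alpha(\g\otimes_\K A)$, and then Corollary \ref{Pullback} applied to the crossed homomorphism $H$ shows that $(V\otimes_\K M;(\rho\boxplus\theta)\circ\iota_H)$ is a weak representation of $\L$, which is exactly $F_H((V;\theta),(M;\rho))$. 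So the first thing left to check is that $F_H$ is functorial, and the remaining work is that $a$ and $l$ are natural isomorphisms \emph{of weak representations} and that the pentagon and triangle diagrams commute.

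For functoriality, I would first check that for a $\g$-representation homomorphism $\psi:V\to V'$ and a weak representation homomorphism $\phi:M\to M'$, the map $\psi\otimes\phi$ is a homomorphism of weak representations of $\L$. Writing $Hx = x_i^\g\otimes x_i^A$, the action on the source is $((\rho\boxplus\theta)\circ\iota_H)(x)(v\otimes m)=v\otimes\rho(x)m+\theta(x_i^\g)v\otimes x_i^A m$; applying $\psi\otimes\phi$ and using that $\phi$ is $A$-linear with $\phi\circ\rho(x)=\rho'(x)\circ\phi$ and that $\psi\circ\theta(g)=\theta'(g)\circ\psi$ turns this into the corresponding action on the target, so $\psi\otimes\phi$ intertwines; $A$-linearity of $\psi\otimes\phi$ is immediate from $A$-linearity of $\phi$ and the $A$-module structure $a(v\otimes m)=v\otimes am$. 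Preservation of identities and composition is the usual functoriality of $-\otimes-$, so $F_H$ is a bifunctor into $\WRep_\K(\L)$.

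The main point is to show that the associator $a$ is an isomorphism of weak representations. On underlying spaces it is the canonical reassociation $(v_1\otimes v_2)\otimes m\mapsto v_1\otimes(v_2\otimes m)$, hence an $A$-linear isomorphism, so the only thing to verify is intertwining. Here I would expand both sides explicitly. On the source, using the monoidal product $(V_1;\theta_1)\otimes(V_2;\theta_2)=(V_1\otimes V_2;\theta_1\otimes\Id+\Id\otimes\theta_2)$, the action of $x$ produces $(v_1\otimes v_2)\otimes\rho(x)m+(\theta_1(x_i^\g)v_1\otimes v_2+v_1\otimes\theta_2(x_i^\g)v_2)\otimes x_i^A m$. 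On the target the action is built by iterating $\boxplus$ with the \emph{same} crossed homomorphism $H$: first $(V_2\otimes M;(\rho\boxplus\theta_2)\circ\iota_H)$, then $\boxplus\,\theta_1$ followed by $\iota_H$. Expanding gives $v_1\otimes v_2\otimes\rho(x)m+v_1\otimes\theta_2(x_i^\g)v_2\otimes x_i^A m+\theta_1(x_i^\g)v_1\otimes v_2\otimes x_i^A m$. These two expressions correspond under $a$, so $a$ intertwines. I expect this calculation, keeping careful track that the inner and outer applications of $\iota_H$ use the same element $x_i^\g\otimes x_i^A$, to be the main obstacle; the crucial phenomenon is that the ``cross term'' of the iterated construction reassembles precisely the comultiplied action $\theta_1\otimes\Id+\Id\otimes\theta_2$. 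The left unit $l\colon\K\otimes_\K M\to M$, $k\otimes m\mapsto km$, is handled similarly but more easily: since $\theta=0$ on $(\K;0)$, the action $((\rho\boxplus 0)\circ\iota_H)(x)(k\otimes m)=k\otimes\rho(x)m$ is carried by $l$ to $\rho(x)(km)$, so $l$ intertwines and is an $A$-linear isomorphism.

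Finally, naturality of $a$ and $l$ is the usual naturality of the vector-space associator and left unitor, and the pentagon and triangle diagrams commute because all of the structure maps involved ($a$, $a^\M$, $r$, $l$, $l^\M$) are the standard reassociation and unit maps of the underlying tensor products; hence the two diagrams reduce on underlying spaces to the pentagon and triangle coherence already valid in $\Vect_\K$ (equivalently, inherited from the monoidal category $\Rep_\K(\g)$), and since the vertices are the weak representations produced by $F_H$ and the edges are weak representation homomorphisms by the preceding steps, the diagrams commute in $\WRep_\K(\L)$. This would complete the verification of all axioms in Definition \ref{left-module-cate}.
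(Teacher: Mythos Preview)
Your proposal is correct and follows essentially the same route as the paper's proof: both invoke Lemma~\ref{undeformed} and Corollary~\ref{Pullback} for well-definedness on objects, then verify directly that $\psi\otimes\phi$, $a$, and $l$ are $A$-linear and intertwine the $\L$-actions by expanding $(\rho\boxplus\theta)\circ\iota_H$ in terms of $Hx=x_i^\g\otimes x_i^A$, and finally dispose of the pentagon and triangle by reduction to the standard coherence in $\Vect_\K$. The only cosmetic difference is that you articulate the reason the coherence diagrams commute a bit more explicitly than the paper, which simply says ``by straightforward computations''.
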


\begin{proof}

By Corollary \ref{Pullback} and Lemma \ref{undeformed}, $({V}\ot_\K M;(\rho\boxplus\theta)\circ\iota_{H})$ is a weak representation of $\huaL$. Thus, $F_H$ is well-defined on the set of objects.
To see that $F_H$ is also well-defined on the set of morphisms, we need to show that the linear map $\psi\otimes \phi:V\otimes M\lon V'\otimes M'$ is indeed a homomorphism from $({V}\ot_\K M;(\rho\boxplus\theta)\circ\iota_{H})$ to $({V}'\ot_\K M';(\rho'\boxplus\theta')\circ\iota_{H})$. In fact, for all $a\in A,~v\in V,~m\in M$, we have
\begin{eqnarray*}
(\psi\otimes \phi)\Big(a(v\otimes m)\Big)-a\Big((\psi\otimes \phi)(v\otimes m)\Big)
&=&(\psi\otimes \phi)(v\otimes am)-a\Big(\psi(v)\otimes \phi(m)\Big)\\
&=&\psi(v)\otimes \phi(am)\otimes -\psi(v)\otimes a\phi(m)\\
&=&0.
\end{eqnarray*}
For all $x\in\g,~v\in V$ and $m\in M$, we have
\begin{eqnarray*}
&&(\psi\otimes \phi)\Big(\big((\rho\boxplus\theta)\iota_{H}(x)\big)(v\otimes m)\Big)-\Big((\rho'\boxplus\theta')\iota_{H}(x)\Big)\Big((\psi\otimes \phi)(v\otimes m)\Big)\\
&=&(\psi\otimes \phi)\Big(\big((\rho\boxplus\theta)(x,x_i^\g\otimes x_i^A)\big)(v\otimes m)\Big)-\Big((\rho'\boxplus\theta')(x,x_i^\g\otimes x_i^A)\Big)\Big(\psi(v)\otimes \phi(m)\Big)\\
&=&(\psi\otimes \phi)\Big(v\otimes\rho(x)m+\theta(x_i^\g)v\otimes x_i^Am\Big)-\Big(\psi(v)\otimes \rho'(x)\phi(m)+\theta'(x_i^\g)\psi(v)\otimes x_i^A\phi(m)\Big)\\
&=&\Big(\psi(v)\otimes\phi(\rho(x)m)+\psi(\theta(x_i^\g)v)\otimes  \phi(x_i^Am)\Big)-\Big(\psi(v)\otimes \rho'(x)\phi(m)+\theta'(x_i^\g)\psi(v)\otimes x_i^A\phi(m)\Big)\\
&=&0.
\end{eqnarray*}
Thus, we obtain that $F_{H}(\psi,\phi)=\psi\otimes \phi$ is a homomorphism of the weak representations. Moreover, by straightforward computations, we deduce that $F_{H}$ preserves identity morphisms and composite morphisms. Therefore, $F_{H}$ is a bifunctor.

Let $(V_1;\theta_1)$ and $(V_2;\theta_2)$ be representations of the $\K$-Lie algebra $\g$ and $({M};\rho)$ be a weak representation of the Lie-Rinehart algebra $({A},\L,[\cdot,\cdot]_\L,\alpha)$. For all $b\in A,~v_1\in V_1,~v_2\in V_2$ and $m\in M$, we have
\begin{eqnarray*}
&&a_{(V_1;\theta_1),(V_2;\theta_2),(M;\rho)}\Big(b\big((v_1\otimes v_2)\otimes m\big)\Big)-ba_{(V_1;\theta_1),(V_2;\theta_2),(M;\rho)}\Big((v_1\otimes v_2)\otimes m\Big)\\
&=&a_{(V_1;\theta_1),(V_2;\theta_2),(M;\rho)}\Big((v_1\otimes v_2)\otimes bm\Big)-b\Big(v_1\otimes (v_2\otimes m)\Big)\\
&=&v_1\otimes (v_2\otimes bm)-v_1\otimes b(v_2\otimes m)\\
&=&0.
\end{eqnarray*}
For all $x\in\L,~v_1\in V_1,~v_2\in V_2$ and $m\in M$, we have
\begin{eqnarray*}
&&a_{(V_1;\theta_1),(V_2;\theta_2),(M;\rho)}\Big(\Big(\Big(\rho\boxplus\Big(\theta_1\otimes {\Id}_{V_2}+{\Id}_{V_1}\otimes \theta_2\Big)\Big)\iota_{H}(x)\Big)\big((v_1\otimes v_2)\otimes m\big)\Big)\\
&&-\Big(\Big(\Big((\rho\boxplus\theta_2)\circ\iota_{H}\Big)\boxplus\theta_1\Big)\iota_{H}(x)\Big)a_{(V_1;\theta_1),(V_2;\theta_2),(M;\rho)}((v_1\otimes v_2)\otimes m)\\
&=&a_{(V_1;\theta_1),(V_2;\theta_2),(M;\rho)}\Big(\Big(\Big(\rho\boxplus\Big(\theta_1\otimes {\Id}_{V_2}+{\Id}_{V_1}\otimes \theta_2\Big)\Big)(x,x_i^\g\otimes x_i^A)\Big)((v_1\otimes v_2)\otimes m)\Big)\\
&&-\Big(\Big(\Big((\rho\boxplus\theta_2)\circ\iota_{H}\Big)\boxplus\theta_1\Big)\iota_{H}(x)\Big)(v_1\otimes (v_2\otimes m))\\
&=&a_{(V_1;\theta_1),(V_2;\theta_2),(M;\rho)}\Big((v_1\otimes v_2)\otimes\rho(x)m+(\theta_1\otimes {\Id}_{V_2}+{\Id}_{V_1}\otimes \theta_2)(x_i^\g)(v_1\otimes v_2)\otimes x_i^Am\Big)\\&&-\Big(\Big(\Big((\rho\boxplus\theta_2)\circ\iota_{H}\Big)\boxplus\theta_1\Big)(x,x_i^\g\otimes x_i^A)\Big)(v_1\otimes (v_2\otimes m))\\
&=&v_1\otimes (v_2\otimes\rho(x)m)+\theta_1(x_i^\g)v_1\otimes (v_2\otimes x_i^Am)+v_1\otimes (\theta_2(x_i^\g)v_2\otimes x_i^Am)\\
&&-\Big(v_1\otimes\big((\rho\boxplus\theta_2)\iota_{H}(x)(v_2\otimes m)\big)+\theta_1(x_i^\g)v_1\otimes x_i^A(v_2\otimes m)\Big)\\
&=&v_1\otimes (v_2\otimes\rho(x)m)+\theta_1(x_i^\g)v_1\otimes (v_2\otimes x_i^Am)+v_1\otimes (\theta_2(x_i^\g)v_2\otimes x_i^Am)\\
&&-\Big(v_1\otimes\big(v_2\otimes\rho(x)m+\theta_2(x_i^\g)v_2\otimes x_i^Am\big)+\theta_1(x_i^\g)v_1\otimes (v_2\otimes x_i^Am)\Big)\\
&=&0.
\end{eqnarray*}

Thus, we obtain that $a_{(V_1;\theta_1),(V_2;\theta_2),(M;\rho)}$ is a homomorphism of the weak representations. Moreover, by straightforward computations, we obtain that $a_{(V_1;\theta_1),(V_2;\theta_2),(M;\rho)}$ is a natural isomorphism and satisfies the pentagon diagram in Definition \ref{left-module-cate}.

Let $({M};\rho)$ be a weak representation of the Lie-Rinehart algebra $({A},\L,[\cdot,\cdot]_\L,\alpha)$.  We have
\begin{eqnarray*}
l_{({M};\rho)}(a(k\otimes m))=l_{({M};\rho)}(k\otimes am)=k(am)=a(km)=al_{({M};\rho)}(k\otimes m), \quad \forall a\in A,~k\in\K, ~m\in M.
\end{eqnarray*}
For all $x\in\L,~k\in\K$ and $m\in M$, we have
\begin{eqnarray*}
&&l_{({M};\rho)}\Big(\big((\rho\boxplus 0)\iota_{H}(x)\big)(k\otimes m)\Big)-\rho(x)\Big(l_{({M};\rho)}(k\otimes m)\Big)\\
&=&l_{({M};\rho)}\Big(\big((\rho\boxplus 0)(x,x_i^\g\otimes x_i^A)\big)(k\otimes m)\Big)-\rho(x)(km)
=l_{({M};\rho)}\Big(k\otimes\rho(x)m\Big)-\rho(x)(km)\\
&=&k(\rho(x)m)-\rho(x)(km)
=0.
\end{eqnarray*}
Thus, we deduce that $l_{({M};\rho)}$ is a homomorphism of   weak representations. Moreover, by straightforward computations, we obtain that $l_{({M};\rho)}$ is a natural isomorphism and satisfies the triangle diagram in Definition \ref{left-module-cate}. The proof is finished.
\end{proof}

Since $({A};\alpha)$ is  a representation of a Lie-Rinehart algebra $({A},\L,[\cdot,\cdot]_\L,\alpha)$, which is known as the {\bf natural representation}, we obtain the following result.

\begin{cor}\label{Shen-Larsson -functors}
Let $({A},\L,[\cdot,\cdot]_\L,\alpha)$ be a Lie-Rinehart algebra,  $(\mg,[\cdot,\cdot]_\g)$  a $\K$-Lie algebra and ${H}$  a crossed homomorphism from   $\L$ to $\g\ot_\K{A}$. Then we have   a functor     $$\aligned &F_{{H}}^A:\Rep_{\K}(\g)\to \WRep_{\K}(\L),\\
& (V;\theta)\mapsto (V\ot_\K{A}; ({\alpha}\boxplus \theta)\circ\iota_{H}),\quad\forall(V;\theta)\in \Rep_{\K}(\g).\endaligned
$$
\end{cor}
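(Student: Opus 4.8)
The plan is to obtain $F_H^A$ as a specialization of the bifunctor $F_H$ from Theorem \ref{bifunctor}, freezing its second argument at the natural representation $(A;\alpha)$. The only preliminary point to settle is that $(A;\alpha)$ genuinely is an object of $\WRep_{\K}(\L)$; granting this, restricting a bifunctor to a fixed object in one slot automatically yields a functor in the remaining slot, and reading off the formulas from Theorem \ref{bifunctor} gives exactly the stated assignment.

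First I would check that $(A;\alpha)$ is a weak representation of $\L$ on the $A$-module $A$. For each $x\in\L$ the anchor value $\alpha(x)$ is a $\K$-derivation of $A$, so the Leibniz rule $\alpha(x)(ab)=a\alpha(x)(b)+\alpha(x)(a)b$ says precisely that the pair $(\alpha(x),\alpha(x))$ is a first order differential operator on $A$, i.e. lies in $\D(A)$. Since $\alpha:\L\to\Der_\K(A)$ is a $\K$-Lie algebra homomorphism, the induced map $x\mapsto(\alpha(x),\alpha(x))$ is a $\K$-Lie algebra homomorphism into $(\D(A),[\cdot,\cdot]_C)$ satisfying $\pr\circ\alpha=\alpha$, hence a Lie-Rinehart weak homomorphism $\L\to\D(A)$; the $A$-linearity of the anchor even upgrades it to a representation, namely the natural representation, so that $(A;\alpha)\in\Rep(\L)\subseteq\WRep_{\K}(\L)$.

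With $(A;\alpha)$ in hand, I would simply set $F_H^A(-):=F_H(-,(A;\alpha))$. By Theorem \ref{bifunctor}, substituting $M=A$ and $\rho=\alpha$ yields on objects $F_H^A(V;\theta)=(V\ot_\K A;(\alpha\boxplus\theta)\circ\iota_{H})$ and on morphisms $F_H^A(V\stackrel{\psi}{\lon}V')=V\ot A\stackrel{\psi\ot\Id_A}{\lon}V'\ot A$, matching the claimed description. Well-definedness on objects and morphisms, together with preservation of identities and of composition, are inherited verbatim from the corresponding properties of the bifunctor $F_H$ already established in Theorem \ref{bifunctor}. There is essentially no obstacle here: all the real content sits in Theorem \ref{bifunctor}, and the only step one must not overlook is confirming that the natural representation $(A;\alpha)$ is a legitimate object of the target category, which is immediate from the Lie-Rinehart axioms. (Note that although $(A;\alpha)$ is a genuine representation, the pullback along the merely weak homomorphism $\iota_{H}$ produces in general only a weak representation of $\L$, which is why the functor lands in $\WRep_{\K}(\L)$ rather than in $\Rep(\L)$.)
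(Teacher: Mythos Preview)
Your proposal is correct and follows exactly the paper's approach: the paper simply notes that $(A;\alpha)$ is the natural representation of the Lie-Rinehart algebra $\L$ and then states the corollary as an immediate consequence of Theorem~\ref{bifunctor} with the second argument frozen at $(A;\alpha)$. Your write-up just spells out the (straightforward) verification that $(A;\alpha)\in\WRep_\K(\L)$ and the mechanics of restricting the bifunctor, which the paper leaves implicit.
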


We can also have a very useful functor on $\WRep_{\K}(\L)$ as follows.

\begin{cor}\label{twisting-functors}
Let $({A},\L,[\cdot,\cdot]_\L,\alpha)$ be a Lie-Rinehart algebra, $(\mg,[\cdot,\cdot]_\g)$ a $\K$-Lie algebra, ${H}$  a crossed homomorphism from  $\L$ to $\g\ot_\K{A}$, and $(V;\theta)$  a given representation of $\g$. Then we have   a functor
$$\aligned &F_{H}^\theta :\WRep_{\K}(\L)\to \WRep_{\K}(\L),\\
& ({M};\rho)\mapsto (V\ot_\K{M} ;({\rho}\boxplus \theta)\circ\iota_{H}),\quad\forall({M};\rho)\in\WRep_{\K}(\L).\endaligned
$$
\end{cor}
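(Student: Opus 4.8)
The plan is to realize $F_H^\theta$ as the partial functor obtained from the bifunctor $F_H$ of Theorem \ref{bifunctor} by freezing the first variable at the fixed representation $(V;\theta)$. In other words, I would observe that $F_H^\theta(-)=F_H\big((V;\theta),-\big)$, so that every claim reduces to facts already established for $F_H$, together with the elementary categorical fact that fixing one argument of a bifunctor yields a functor in the remaining argument. No new structure needs to be built; the content is purely a specialization.

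On objects, given $({M};\rho)\in\WRep_{\K}(\L)$, the pair $(V\ot_\K{M};(\rho\boxplus\theta)\circ\iota_{H})$ is a weak representation of $\L$ by Corollary \ref{Pullback} and Lemma \ref{undeformed} (equivalently, by the object part of Theorem \ref{bifunctor}), so $F_H^\theta$ is well defined on objects. On morphisms, for a homomorphism of weak representations $\phi\colon({M};\rho)\lon({M}';\rho')$ I would set $F_H^\theta(\phi):={\Id}_V\ot\phi\colon V\ot_\K M\lon V\ot_\K M'$. That ${\Id}_V\ot\phi$ is again a homomorphism of weak representations, i.e. that it is $A$-linear and intertwines $(\rho\boxplus\theta)\circ\iota_{H}$ with $(\rho'\boxplus\theta)\circ\iota_{H}$, is exactly the morphism computation carried out in the proof of Theorem \ref{bifunctor}, specialized to $\psi={\Id}_V$ (and $\theta'=\theta$); I would invoke that computation rather than repeat it.

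Finally, functoriality is routine: $F_H^\theta(\Id_M)={\Id}_V\ot\Id_M=\Id_{V\ot_\K M}$, and for composable morphisms $\phi,\phi'$ one has $({\Id}_V\ot\phi')\circ({\Id}_V\ot\phi)={\Id}_V\ot(\phi'\circ\phi)$, both identities being immediate properties of the tensor product. The only point requiring genuine verification is the intertwining property of ${\Id}_V\ot\phi$, and since this is the $\psi={\Id}_V$ case of a calculation already completed for the bifunctor, there is no real obstacle: the corollary is a direct specialization of Theorem \ref{bifunctor} and could, if desired, be stated simply as the assertion that each object $(V;\theta)$ of $\Rep_{\K}(\g)$ acts on the left module category $\WRep_{\K}(\L)$ via the endofunctor $F_H^\theta$.
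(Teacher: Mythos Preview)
Your proposal is correct and matches the paper's approach: the paper states this corollary without proof, treating it as an immediate specialization of Theorem~\ref{bifunctor} obtained by fixing the first argument of the bifunctor $F_H$ at $(V;\theta)$. Your identification $F_H^\theta(-)=F_H\big((V;\theta),-\big)$ and the reduction of all verifications to the $\psi={\Id}_V$ case of the computations in Theorem~\ref{bifunctor} is exactly what is intended.
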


A special but very interesting case of the above result is that $(V;\theta)=(\g;\ad)$. In the next section we will show that Corollary \ref{twisting-functors} is a very efficient way to construct interesting modules from easy modules.

\subsection{Admissible representations of Leibniz pairs}

In this subsection, we introduce the notion of an admissible representation  of a Leibniz pair.  In the sequel, $A$ is always a commutative associative algebra.  The notion of a Leibniz pair was originally given in \cite{FGV}.

\begin{defi}{\rm(\cite{FGV})}
  A {\bf Leibniz pair}  consists of a $\K$-Lie algebra $(\huaS,[\cdot,\cdot]_\huaS)$ and a $\K$-Lie algebra homomorphism $\beta:\huaS\lon \Der_\K(A)$.
\end{defi}
We denote  a Leibniz pair by  $(A,\huaS,[\cdot,\cdot]_\huaS,\beta)$, or simply by $\huaS$.

 \begin{defi}
An {\bf admissible representation} of a Leibniz pair $(A,\huaS,[\cdot,\cdot]_\huaS,\beta)$ consists of an $A$-module $M$  and a $\K$-Lie algebra homomorphism $\rho:\mathcal{S}\to \gl_\K(M)$   such that
\begin{equation}\label{eq:conanchor}\rho(x)(am)=a\rho(x)m+\beta(x)(a)m,\quad \forall x\in \mathcal{S}, a\in A, m\in M.\end{equation}
 \end{defi}

\begin{defi}Let $(A,\huaS,[\cdot,\cdot]_\huaS,\beta)$ be a Leibniz pair,  $({M};\rho)$ and $({M}';\rho')$  two
 admissible representation of $\huaS$. An ${A}$-module homomorphism $\phi:{M}\to{M}'$ is said to be a {\bf homomorphism of admissible representations} if $\phi\circ\rho(x)=\rho'(x)\circ\phi$ for all $x\in \huaS.$
 \end{defi}

 Admissible representations of Leibniz pairs are like weak representations of  Lie-Rinehart algebras. We use $\ARep_{\K}(\mathcal{S})$ to denote the category of admissible representations of  $\mathcal{S}$.

 It is straightforward to obtain the following result.
 \begin{lem}
Let $(A,\huaS,[\cdot,\cdot]_\huaS,\beta)$ be a Leibniz pair, $M$ an $A$-module and $\rho:\mathcal{S}\to \gl_\K(M)$ a $\K$-linear map. Then $(M;\rho)$ is an  admissible representation of $\huaS$ if and only if $(A\ltimes M, \huaS\oplus M,[\cdot,\cdot]_\rho,\hat{\beta})$ is a Leibniz pair, where $A\ltimes M$ is the commutative associative algebra given in Remark \ref{rmk:ca}, $[\cdot,\cdot]_\rho$ is the semidirect product Lie bracket and $\hat{\beta}:\huaS\oplus M\lon\Der_\K(A\ltimes M)$ is defined by
$$
\hat{\beta}(x,m)(a,n):=(\beta(x)a,\rho(x)n),\quad\forall x\in \huaS, a\in A, m,n\in M.
$$
 \end{lem}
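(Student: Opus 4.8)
The plan is to prove the biconditional by unwinding both sides into the axioms of a Leibniz pair and checking that the two conditions on $\rho$ that arise are exactly the Lie algebra homomorphism property and the compatibility condition \eqref{eq:conanchor}. Recall that $(A\ltimes M,\huaS\oplus M,[\cdot,\cdot]_\rho,\hat\beta)$ being a Leibniz pair means two things: first, that $\hat\beta$ is a well-defined map into $\Der_\K(A\ltimes M)$, i.e.\ each $\hat\beta(x,m)$ is actually a derivation of the commutative associative algebra $A\ltimes M$; and second, that $\hat\beta$ is a $\K$-Lie algebra homomorphism from $(\huaS\oplus M,[\cdot,\cdot]_\rho)$ to $\Der_\K(A\ltimes M)$. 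I would treat these two requirements as the two halves of the argument.

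First I would verify that $\hat\beta(x,m)$ is a derivation of $A\ltimes M$. Using the multiplication $(a,n)\cdot(b,l)=(ab,al+bn)$ from Remark \ref{rmk:ca} and the definition $\hat\beta(x,m)(a,n)=(\beta(x)a,\rho(x)n)$, I would expand $\hat\beta(x,m)\big((a,n)\cdot(b,l)\big)$ and compare it with $\hat\beta(x,m)(a,n)\cdot(b,l)+(a,n)\cdot\hat\beta(x,m)(b,l)$. The first coordinate reduces to the statement that $\beta(x)$ is a derivation of $A$, which holds since $\beta:\huaS\to\Der_\K(A)$; the second coordinate produces the identity $\rho(x)(al+bn)=a\rho(x)l+\beta(x)(a)l+b\rho(x)n+\beta(x)(b)n$, which, by linearity, is equivalent to \eqref{eq:conanchor} holding for the two terms $al$ and $bn$. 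Thus the derivation property of $\hat\beta(x,m)$ is equivalent to the compatibility condition \eqref{eq:conanchor}.

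Next I would check the Lie algebra homomorphism property. The bracket $[\cdot,\cdot]_\rho$ on $\huaS\oplus M$ is the semidirect product bracket $[(x,m),(y,l)]_\rho=([x,y]_\huaS,\rho(x)l-\rho(y)m)$. I would compute $\hat\beta([(x,m),(y,l)]_\rho)$ and the commutator $[\hat\beta(x,m),\hat\beta(y,l)]$ in $\Der_\K(A\ltimes M)$, evaluating both on a general element $(a,n)$. In the $A$-coordinate the equality amounts to $\beta([x,y]_\huaS)=[\beta(x),\beta(y)]$, which is precisely the fact that $\beta$ is a Lie algebra homomorphism; in the $M$-coordinate it amounts to $\rho([x,y]_\huaS)n=\rho(x)\rho(y)n-\rho(y)\rho(x)n$, which is exactly the statement that $\rho$ is a $\K$-Lie algebra homomorphism into $\gl_\K(M)$. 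Collecting both coordinates, the homomorphism property of $\hat\beta$ is equivalent to $\rho$ being a Lie algebra homomorphism.

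Combining the two halves, $(A\ltimes M,\huaS\oplus M,[\cdot,\cdot]_\rho,\hat\beta)$ is a Leibniz pair if and only if $\rho$ is a $\K$-Lie algebra homomorphism satisfying \eqref{eq:conanchor}, which is exactly the definition of $(M;\rho)$ being an admissible representation of $\huaS$. I do not expect any genuine obstacle here, since the statement is essentially a bookkeeping identity; the only point requiring mild care is keeping the two coordinates of $A\ltimes M$ separate throughout and invoking linearity to pass between the single-variable form of \eqref{eq:conanchor} and the mixed terms $al+bn$ that appear in the derivation computation. Accordingly I would present the proof as two short coordinate-wise verifications rather than one long expansion.
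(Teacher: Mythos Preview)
Your proposal is correct; the paper itself does not supply a proof, stating only that the result ``is straightforward to obtain.'' One small point worth making explicit: the requirement that $(\huaS\oplus M,[\cdot,\cdot]_\rho)$ be a Lie algebra---part of the Leibniz pair data---also forces $\rho$ to be a Lie algebra homomorphism (via the Jacobi identity for the semidirect product bracket), so this condition is actually encoded twice in the Leibniz pair axioms; since you recover it from the $M$-coordinate of the $\hat\beta$-homomorphism check anyway, the biconditional goes through as you describe.
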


It is obvious that any Lie-Rinehart algebra is a  Leibniz pair. A weak representation of a Lie-Rinehart algebra is naturally an admissible representation of the underlying Leibniz pair. Actually we have the following  category equivalence:
 $$
\WRep_{\K}(\huaL)\rightleftarrows \ARep_{\K}(\huaL),
$$
where the right-hand side $\huaL$ is considered as a Leibniz pair.

 Conversely, given a Leibniz pair $(A,\huaS,[\cdot,\cdot]_\huaS,\beta)$, we also have
  an action Lie-Rinehart algebra $(A,\huaS\otimes_\K A,[\cdot,\cdot],\alpha)$,  where  the ${A}$-module structure and the $\K$-Lie bracket $[\cdot,\cdot]$ are given by
  $$
  a(x\ot b)=x\ot ab,\quad [x\ot a,y\ot b]=[x,y]_\huaS\ot ab+y\otimes (a\beta(x)b)-x\otimes (b\beta(y)a),
  $$
and an $A$-module homomorphism $\alpha:\huaS\otimes_\K A\lon\Der_\K({A})$ is defined by
$
\alpha(x\ot a):=a\beta(x)
$
for all  $a,b\in {A},~x,y\in \huaS.$ Furthermore, we obtain the following result.
\begin{pro}\label{pro:constructionbarrho}
  Let $(M;\rho)$ be an   admissible representation   of a Leibniz pair $(A,\huaS,[\cdot,\cdot]_\huaS,\beta)$. Define $\overline{\rho}:\huaS\otimes_\K A\lon \gl_\K(M)$ by
  $$
  \overline{\rho}(x\ot a):=a\rho(x),\quad \forall x\in\huaS, a\in A.
  $$
  Then $(M;\overline{\rho})$ is a representation of the Lie-Rinehart algebra $(A,\huaS\otimes_\K A,[\cdot,\cdot],\alpha)$.
\end{pro}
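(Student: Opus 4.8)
The plan is to verify directly that the map $\overline{\rho}(x\otimes a) := a\rho(x)$ defines a representation of the action Lie-Rinehart algebra $(A,\huaS\otimes_\K A,[\cdot,\cdot],\alpha)$. There are exactly three things to check: that $\overline{\rho}$ lands in $\D(M)$ (i.e., each $\overline{\rho}(x\otimes a)$ is a first order differential operator with the correct symbol $\alpha(x\otimes a)$), that $\overline{\rho}$ is a $\K$-Lie algebra homomorphism, and that $\overline{\rho}$ is an $A$-module homomorphism (this last is what upgrades a weak representation to a genuine representation). I would organize the proof around these three verifications, each reducing to the defining relation \eqref{eq:conanchor} of an admissible representation together with the Leibniz rule for the derivations $\beta(x)\in\Der_\K(A)$.

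First I would check the first-order differential operator condition. For $a,b\in A$ and $m\in M$, compute
\[
\overline{\rho}(x\otimes a)(bm)=a\rho(x)(bm)=a\big(b\rho(x)m+\beta(x)(b)m\big)=b\big(a\rho(x)m\big)+\big(a\beta(x)(b)\big)m,
\]
using \eqref{eq:conanchor}. Since $\alpha(x\otimes a)=a\beta(x)$, the right-hand side is exactly $b\,\overline{\rho}(x\otimes a)(m)+\alpha(x\otimes a)(b)\,m$, so $\big(\overline{\rho}(x\otimes a),\alpha(x\otimes a)\big)\in\D(M)$, i.e. $\pr\circ\overline{\rho}=\alpha$. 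Next I would verify the Lie-algebra homomorphism property. Using the given bracket on $\huaS\otimes_\K A$, I must show
\[
[\overline{\rho}(x\otimes a),\overline{\rho}(y\otimes b)]_C = \overline{\rho}\big([x\otimes a,y\otimes b]\big)
\]
for all $x,y\in\huaS$ and $a,b\in A$. Expanding the left side as $a\rho(x)\circ b\rho(y)-b\rho(y)\circ a\rho(x)$ and pushing the scalars past the operators via \eqref{eq:conanchor} (each $\rho(x)$ acts on the scalar $b$ through $\beta(x)(b)$), the cross terms should assemble into $ab[\rho(x),\rho(y)]$ plus the terms $a\beta(x)(b)\rho(y)-b\beta(y)(a)\rho(x)$. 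The former matches $ab\,\rho([x,y]_\huaS)=\overline{\rho}([x,y]_\huaS\otimes ab)$ because $\rho$ is a Lie homomorphism on $\huaS$, while the latter two match $\overline{\rho}(y\otimes a\beta(x)b)-\overline{\rho}(x\otimes b\beta(y)a)$, so everything lines up with the action bracket. Finally the $A$-linearity $\overline{\rho}(c(x\otimes a))=c\,\overline{\rho}(x\otimes a)$ is immediate, since $c(x\otimes a)=x\otimes ca$ and $\overline{\rho}(x\otimes ca)=(ca)\rho(x)=c\big(a\rho(x)\big)$.

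The main obstacle, and the only step requiring genuine care, is the Lie-homomorphism computation: one must track how the scalars interact with the operators when commuting $a\rho(x)$ with $b\rho(y)$, since $\rho(x)$ does not commute with multiplication by $b$ but instead produces the derivation term $\beta(x)(b)$. The bookkeeping of these extra derivation terms is exactly engineered to reproduce the two correction terms $y\otimes a\beta(x)b$ and $x\otimes b\beta(y)a$ in the definition of the action Lie-Rinehart bracket, so the identity \eqref{eq:conanchor} together with the Leibniz property of $\beta(x)$ makes the two sides coincide. Alternatively, one could shorten the argument by invoking the stated category equivalence $\ARep_{\K}(\huaS)\rightleftarrows\Rep(\huaS\otimes_\K A)$, but since that equivalence is precisely what this proposition helps establish, I would prefer the direct verification above.
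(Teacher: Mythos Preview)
Your proposal is correct and follows essentially the same approach as the paper: the paper verifies the same three conditions, writing out the first-order differential operator computation exactly as you do and declaring the $A$-linearity obvious and the Lie-homomorphism property straightforward. Your write-up simply supplies more detail on the commutator computation than the paper bothers to record.
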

 \begin{proof}
  First it is obvious that $ \overline{\rho}$ is an $A$-module homomorphism from $\huaS\otimes_\K A$ to $\gl_\K(M)$. Then it is straightforward to deduce that $\overline{\rho}$ is a $\K$-Lie algebra homomorphism. Finally, by \eqref{eq:conanchor}, we deduce that
  $$
  \overline{\rho}(x\ot a)(bm)=a\rho(x)(bm)=a\big(b\rho(x)m+\beta(x)(b)m\big)=b\overline{\rho}(x\ot a)m+\alpha(x\ot a)(b)m.
  $$
Thus,  $(M;\overline{\rho})$ is a representation of the Lie-Rinehart algebra $\huaS\otimes_\K A$.
 \end{proof}

\begin{rmk}\label{rmk:equi}
Actually we have the following  category equivalence if  $A$ is unital:
	$$
	\ARep_{\K}(\huaS)\rightleftarrows \Rep(\huaS\otimes_\K A).
	$$
First the construction of Proposition \ref{pro:constructionbarrho} can be easily enhanced to a functor. In fact, assume that $\phi:{M}\to{M}'$  is a   homomorphism of admissible representations  of a Leibniz pair $(A,\huaS,[\cdot,\cdot]_\huaS,\beta)$, then  it is straightforward to deduce that  $$\phi\circ\overline{\rho}(x\otimes a)=\overline{\rho'}(x\otimes a)\circ\phi$$ for all $x\in \huaS, a\in A.$ Thus,  $\phi:{M}\to{M}'$  is also a   homomorphism of  representations of the Lie-Rinehart algebra $\huaS\otimes_\K A.$ So we obtain a functor $P:\ARep_{\K}(\huaS)\to \Rep(\huaS\otimes_\K A)$, which is defined on the sets of objects and morphisms respectively by
\begin{eqnarray*}
  P(M;\rho)&=&(M;\overline{\rho}),\\
  P(\phi:{M}\to{M}')&=&(\phi:{M}\to{M}').
\end{eqnarray*}
Conversely, let $(M;\rho)$ be a representation of the Lie-Rinehart algebra $\huaS\otimes_\K A.$ Define $\widetilde{ \rho}:\huaS \lon \gl_\K(M)$ by
  $$
   \widetilde{\rho}(x ):= \rho(x\otimes 1),\quad \forall x\in\huaS.
  $$
  Similar as the above discussion, this can also be enhanced to a functor and give the equivalence between the categories $\ARep_{\K}(\huaS)$ and $ \Rep(\huaS\otimes_\K A)$.
\end{rmk}
	
Let $(A,\huaS,[\cdot,\cdot]_\huaS,\beta)$ be a Leibniz pair and $\frkh$ be a $\K$-Lie algebra. Then $(A,\mathcal{S}\oplus  (\h\ot_\K {A}),[\cdot,\cdot],\tilde{\beta})$ is a Leibniz pair, where the $\K$-Lie algebra structure on $\mathcal{S}\oplus  (\h\ot_\K {A})$ is given by
$$
[(x,g\otimes a),(y,h\otimes b)]=([x,y]_\huaS,h\otimes\beta(x)(b)-g\otimes\beta(y)(a)+[g,h]_\h\otimes ab),\quad \forall x,y\in \huaS, g\otimes a, h\otimes b\in\h\otimes_\K A,
$$
and $\tilde{\beta}:\huaS\oplus (\h\ot_\K {A})\lon \Der_\K(A)$ is given by
  $$
  \tilde{\beta}(x,g\otimes a)=\beta(x).
  $$
Denote this Leibniz pair by $\mathcal{S}\ltimes_\beta (\h\ot_\K {A})$.

  Let   $({M};\rho)$ be an admissible representation over $\mathcal{S}$  and $(V;\theta)$ a representation of a $\K$-Lie algebra $\h$. Then ${V}\ot_\K M$ has a natural ${A}$-module structure:
$$
a(v\ot m)=v\ot am,\quad \forall\   a\in{A}, v\in V, m\in{M}.
  $$
We define a $\K$-linear map $ \rho\boxplus\theta :\mathcal{S}\ltimes_\beta (\h\ot_\K {A})\lon\gl_\K({V}\ot_\K M)$ by
\begin{eqnarray*}
(\rho\boxplus\theta)(x,g\ot a)(v\ot m):=v\ot \rho(x)m+\theta(g)v\ot am, \quad \forall x\in\huaS,~a\in{A},~g\in\h,~m\in{M},~v\in V.
\end{eqnarray*}

Then it is straightforward to verify the following result.

\begin{lem}\label{undeformed1}
 With the above notations,
  $({V}\ot_\K M;\rho\boxplus\theta)$ is an admissible representation of the Leibniz pair $\mathcal{S}\ltimes_\beta (\h\ot_\K {A})$.
\end{lem}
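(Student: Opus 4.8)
The plan is to follow verbatim the strategy used for Lemma \ref{undeformed}, observing that the Leibniz-pair bracket on $\mathcal{S}\ltimes_\beta(\h\ot_\K A)$ has precisely the same shape as the Lie-Rinehart bracket $[\cdot,\cdot]_\alpha$ there, with $\alpha$ replaced by $\beta$ and $\L$ replaced by $\huaS$. Two assertions must be verified: that $\rho\boxplus\theta$ is a $\K$-Lie algebra homomorphism into $\gl_\K(V\ot_\K M)$, and that it satisfies the anchor-compatibility condition \eqref{eq:conanchor} with respect to $\tilde{\beta}$. Since an admissible representation places no $A$-linearity requirement on $\rho$ (in contrast to a Lie-Rinehart representation), there is no extra $A$-module-homomorphism clause to check, so the verification is actually shorter than that of Corollary \ref{iterated}.

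First I would verify the homomorphism property. Fixing $x,y\in\huaS$, $g,h\in\h$, $a,b\in A$, $v\in V$, $m\in M$, I expand $\big[(\rho\boxplus\theta)(x,g\ot a),(\rho\boxplus\theta)(y,h\ot b)\big]_C(v\ot m)$ directly from the defining formula $(\rho\boxplus\theta)(x,g\ot a)(v\ot m)=v\ot\rho(x)m+\theta(g)v\ot am$. Because $\rho:\huaS\to\gl_\K(M)$ and $\theta:\h\to\gl(V)$ are Lie algebra homomorphisms, the terms $v\ot\rho(x)\rho(y)m$ and $\theta(g)\theta(h)v\ot(ab)m$, together with their counterparts from the opposite ordering, collapse to $v\ot\rho([x,y]_\huaS)m$ and $\theta([g,h]_\h)v\ot(ab)m$. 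The remaining cross terms group as $\theta(h)v\ot\big(\rho(x)(bm)-b\rho(x)m\big)$ and $-\theta(g)v\ot\big(\rho(y)(am)-a\rho(y)m\big)$, which by the admissible-representation identity $\rho(x)(bm)=b\rho(x)m+\beta(x)(b)m$ become $\theta(h)v\ot\beta(x)(b)m$ and $-\theta(g)v\ot\beta(y)(a)m$. These are exactly the terms produced by $(\rho\boxplus\theta)$ applied to the mixed part $h\ot\beta(x)(b)-g\ot\beta(y)(a)+[g,h]_\h\ot ab$ of the Leibniz bracket, so both sides agree.

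Finally I would check \eqref{eq:conanchor}. Using $b\cdot(v\ot m)=v\ot bm$ and the admissible identity once more,
$$
(\rho\boxplus\theta)(x,g\ot a)\big(b\cdot(v\ot m)\big)=v\ot\rho(x)(bm)+\theta(g)v\ot(ab)m=v\ot\big(b\rho(x)m+\beta(x)(b)m\big)+\theta(g)v\ot(ab)m,
$$
which regroups as $b\cdot\big((\rho\boxplus\theta)(x,g\ot a)(v\ot m)\big)+\beta(x)(b)\cdot(v\ot m)$; since $\tilde{\beta}(x,g\ot a)=\beta(x)$, this is exactly \eqref{eq:conanchor}. The only genuine bookkeeping lives in the bracket computation of the middle step, and it is controlled entirely by the single admissible-representation identity together with the homomorphism properties of $\rho$ and $\theta$; I anticipate no real obstacle beyond the careful regrouping of cross terms, exactly as in the proof of Lemma \ref{undeformed}.
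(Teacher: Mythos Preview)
Your proposal is correct and follows exactly the approach the paper intends: the paper simply declares the result ``straightforward to verify'' and implicitly refers the reader back to the computation in Lemma~\ref{undeformed}, which you have faithfully transcribed with $\alpha$ replaced by $\beta$ and the admissible-representation identity \eqref{eq:conanchor} in place of the first-order-differential-operator condition. Your observation that no $A$-linearity clause needs checking (so the argument is shorter than Corollary~\ref{iterated}) is also correct.
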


Let ${H}$ be a crossed homomorphism from  the $\K$-Lie algebra $\mathcal{S}$ to $ \h\ot_\K{A}$.
Then we have the Lie algebra homomorphism
$$\aligned  &\iota_{H}: \mathcal{S} \to \mathcal{S}\ltimes_\beta (\h\ot_\K {A})\\
&\iota_{H}(x)=(x,{H} x),\quad \forall x\in \mathcal{S}.\endaligned
$$

Similar to  Theorem \ref{bifunctor}, we have the following result.

 \begin{thm}\label{bifunctor'}  Any crossed homomorphism $H:\huaS\lon\h\ot_\K{A}$ induces a left module category structure of the category of admissible representations   {$\ARep_{\K}(\huaS)$} over the monoidal category {$\Rep_{\K}(\h)$}
 $$\aligned &\huaF_{H}:\Rep_{\K}(\h)\times \ARep_{\K}(\mathcal{S}) \to \ARep_{\K}(\mathcal{S})\\
&\huaF_{H}\Big((V;\theta),({M};\rho)\Big)=({V}\ot_\K M;(\rho\boxplus\theta)\circ\iota_{H}).
\endaligned$$\end{thm}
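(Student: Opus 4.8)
The plan is to follow the proof of Theorem \ref{bifunctor} essentially verbatim, replacing throughout weak representations by admissible representations, the Lie-Rinehart algebra $\L$ by the Leibniz pair $\huaS$, the anchor $\alpha$ by $\beta$, the auxiliary Lie algebra $\mg$ by $\h$, and the two key inputs (Lemma \ref{undeformed} and Corollary \ref{Pullback}) by their Leibniz-pair counterparts. The conceptual reason this works is that an admissible representation is, exactly like a weak representation, nothing but a $\K$-Lie algebra homomorphism $\rho$ into $\gl_\K(M)$ subject to the first-order anchor-compatibility \eqref{eq:conanchor}, so every computation transcribes without change once the correct invariant is identified.

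First I would check that $\huaF_H$ is well defined on objects. By Theorem \ref{twitst-iso}(b), since $H$ is a crossed homomorphism with respect to $\beta$, the map $\iota_H:\huaS\to\huaS\ltimes_\beta(\h\ot_\K A)$ is a $\K$-Lie algebra homomorphism; moreover $\tilde{\beta}\circ\iota_H=\beta$, because $\tilde{\beta}(x,Hx)=\beta(x)$. Hence pulling back any admissible representation of the Leibniz pair $\huaS\ltimes_\beta(\h\ot_\K A)$ along $\iota_H$ again yields an admissible representation of $\huaS$: the condition \eqref{eq:conanchor} is preserved precisely because $\iota_H$ is a Lie algebra homomorphism intertwining the two anchors, which is the Leibniz-pair analogue of Corollary \ref{Pullback}. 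Combining this with Lemma \ref{undeformed1}, which gives that $(V\ot_\K M;\rho\boxplus\theta)$ is an admissible representation of $\huaS\ltimes_\beta(\h\ot_\K A)$, shows that $(V\ot_\K M;(\rho\boxplus\theta)\circ\iota_H)$ is an admissible representation of $\huaS$, so $\huaF_H$ indeed lands in $\ARep_\K(\huaS)$.

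Next I would verify that $\huaF_H$ is a bifunctor and that the structure morphisms behave correctly. On morphisms, writing $Hx=x_i^\h\ot x_i^A$ so that $\iota_H(x)=(x,x_i^\h\ot x_i^A)$, one shows that $\psi\otimes\phi$ is $A$-linear and intertwines $(\rho\boxplus\theta)\circ\iota_H$ with $(\rho'\boxplus\theta')\circ\iota_H$; these are the two displayed computations in the proof of Theorem \ref{bifunctor}, now read with $x\in\huaS$. The associativity isomorphism $a$, given by $(v_1\otimes v_2)\otimes m\mapsto v_1\otimes(v_2\otimes m)$, and the left unit isomorphism $l$, given by $k\otimes m\mapsto km$, are then checked to be $A$-linear homomorphisms of admissible representations, natural, invertible, and to satisfy the pentagon and triangle diagrams of Definition \ref{left-module-cate} by the same elementary calculations, the only input specific to $\Rep_\K(\h)$ being that its monoidal product is $\theta_1\otimes\Id+\Id\otimes\theta_2$, exactly as in the Lie-Rinehart case.

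Since the argument is a literal transcription, I expect no genuine obstacle. The single point warranting care is confirming that the right invariant to track is the anchor-compatibility \eqref{eq:conanchor} with respect to $\beta$ (rather than the membership in $\D(M)$ used for Lie-Rinehart algebras, where $\huaS$ carries no $A$-module structure), and verifying that this invariant is preserved both under the pullback along $\iota_H$ and under the $\boxplus$ construction. Once that dictionary between the two settings is fixed, the pentagon and triangle axioms reduce to the same identities already verified in Theorem \ref{bifunctor}, and the proof is complete.
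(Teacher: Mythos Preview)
Your proposal is correct and takes essentially the same approach as the paper. The paper's own proof explicitly computes the anchor-compatibility \eqref{eq:conanchor} for $(\rho\boxplus\theta)\circ\iota_H$ and then defers everything else to Theorem \ref{bifunctor}; your version frames that same verification more conceptually as a pullback along $\iota_H$ combined with Lemma \ref{undeformed1}, which is exactly the Leibniz-pair analogue of the Corollary \ref{Pullback}/Lemma \ref{undeformed} pair you identify.
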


 \begin{proof} We verify that the representation $({V}\ot_\K M;(\rho\boxplus\theta)\circ\iota_{H})$ satisfies \eqref{eq:conanchor}.
 For any $x\in \mathcal{S}, a\in A, v\in V, m\in M$. Suppose $H(x)=Hx=\sum_i x_i^\h\otimes x^A_i$ or $Hx=x_i^\h\otimes x^A_i$ for simplicity.
 Then
 $$\aligned ((\rho\boxplus\theta)\circ\iota_{H})(x)(a(v\otimes m))
 =&((\rho\boxplus\theta) (x, x_i^\h\otimes x^A_i)(v\otimes am)\\
 =&v\otimes \rho(x)(am)+ \theta(x_i^\h) v\otimes x^A_iam\\
 =&a\Big(v\otimes \rho(x)(m)+ \theta(x_i^\h) v\otimes x^A_im\Big)+\beta(x)(a)(v\otimes m)\\
 =&a((\rho\boxplus\theta)\circ\iota_{H})(x)(v\otimes m)+\beta(x)(a)(v\otimes m).\endaligned
 $$
The proof is similar to Theorem \ref{bifunctor}. So the details will be omitted.
 \end{proof}
Since $({A};\beta)$ is  an admissible  representation of a Leibniz pair  $(A,\huaS,[\cdot,\cdot]_\huaS,\beta)$, we obtain the following result.

\begin{cor}\label{Shen-Larsson-functors'}
Let $(A,\huaS,[\cdot,\cdot]_\huaS,\beta)$ be a Leibniz pair,  $(\h,[\cdot,\cdot]_\h)$  a $\K$-Lie algebra and ${H}$  a crossed homomorphism from   $\huaS$ to $\h\ot_\K{A}$. Then we have   a functor     $$\aligned &\huaF_{{H}}^A:\Rep_{\K}(\h)\to \ARep_{\K}(\huaS),\\
& (V;\theta)\mapsto (V\ot_\K{A}; ({\beta}\boxplus \theta)\circ\iota_{H}),\quad\forall(V;\theta)\in \Rep_{\K}(\h).\endaligned
$$
\end{cor}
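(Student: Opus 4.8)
The plan is to obtain this functor simply by freezing the second argument of the bifunctor $\huaF_H$ of Theorem \ref{bifunctor'}. First I would verify that $(A;\beta)$ is genuinely an admissible representation of the Leibniz pair $(A,\huaS,[\cdot,\cdot]_\huaS,\beta)$, where $A$ is regarded as a module over itself via multiplication. Since $\beta:\huaS\lon\Der_\K(A)$ is by definition a $\K$-Lie algebra homomorphism, the only nontrivial point is the compatibility condition \eqref{eq:conanchor}, which for $M=A$ and $\rho=\beta$ reads
$$
\beta(x)(am)=a\beta(x)(m)+\beta(x)(a)m,\quad\forall x\in\huaS,~a,m\in A.
$$
This is exactly the derivation property of $\beta(x)\in\Der_\K(A)$, so $(A;\beta)\in\ARep_\K(\huaS)$. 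No unitality of $A$ is needed for this.

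Next I would invoke Theorem \ref{bifunctor'} with the second slot of the bifunctor set equal to the admissible representation $(A;\beta)$. Fixing one argument of a bifunctor always produces a functor in the remaining argument, so
$$
\huaF_H^A:=\huaF_H\big(-,(A;\beta)\big):\Rep_\K(\h)\lon\ARep_\K(\huaS)
$$
is automatically a well-defined functor. On objects it sends $(V;\theta)$ to $\huaF_H\big((V;\theta),(A;\beta)\big)=(V\ot_\K A;(\beta\boxplus\theta)\circ\iota_H)$, which is precisely the claimed assignment once one substitutes $\rho=\beta$ and $M=A$. On morphisms, a representation homomorphism $\psi:V\lon V'$ of $\h$ is carried to $\psi\ot\Id_A:V\ot_\K A\lon V'\ot_\K A$; that this is a homomorphism of admissible representations, and that identities and composites are preserved, are inherited directly from the corresponding facts for the bifunctor $\huaF_H$ (which in turn rest on Lemma \ref{undeformed1} and the verification in the proof of Theorem \ref{bifunctor'}).

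I expect no real obstacle here: all the substantive work has already been carried out in Lemma \ref{undeformed1} and Theorem \ref{bifunctor'}, and the corollary is merely the specialization of the module-category structure to the \emph{natural} admissible representation $(A;\beta)$, in perfect parallel with Corollary \ref{Shen-Larsson -functors} in the Lie-Rinehart setting. The single item deserving a line of justification is the verification that $(A;\beta)$ belongs to $\ARep_\K(\huaS)$, which as noted is immediate from the fact that each $\beta(x)$ is a derivation of $A$.
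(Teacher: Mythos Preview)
Your proposal is correct and follows essentially the same approach as the paper: the paper simply notes that $(A;\beta)$ is an admissible representation of the Leibniz pair and then states the corollary as an immediate consequence of Theorem \ref{bifunctor'}. Your additional explicit verification that $(A;\beta)\in\ARep_\K(\huaS)$ via the derivation property is exactly the content of the paper's one-line remark preceding the corollary.
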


We can also have a very useful functor on $\WRep_{\K}(\L)$ as follows.

\begin{cor}\label{twisting-functors'}
Let $(A,\huaS,[\cdot,\cdot]_\huaS,\beta)$ be a Leibniz pair,  $(\h,[\cdot,\cdot]_\h)$  a $\K$-Lie algebra, ${H}$  a crossed homomorphism from  $\huaS$ to $\h\ot_\K{A}$, and $(V;\theta)$  a given representation of $\h$. Then we have   a functor
$$\aligned &\huaF_{H}^\theta :\ARep_{\K}(\huaS)\to \ARep_{\K}(\huaS),\\
& ({M};\rho)\mapsto (V\ot_\K{M} ;({\rho}\boxplus \theta)\circ\iota_{H}),\quad\forall({M};\rho)\in\ARep_{\K}(\huaS).\endaligned
$$
\end{cor}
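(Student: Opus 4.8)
The plan is to realize $\huaF_H^\theta$ as the partial functor obtained from the bifunctor $\huaF_H$ of Theorem \ref{bifunctor'} by freezing its first argument at the fixed representation $(V;\theta)$. Concretely, I set $\huaF_H^\theta := \huaF_H\big((V;\theta),-\big)$, so that on objects $\huaF_H^\theta(M;\rho) = (V\ot_\K M;(\rho\boxplus\theta)\circ\iota_H)$ as stated, and on morphisms $\huaF_H^\theta(\phi) := \Id_V\otimes\phi = \huaF_H(\Id_{(V;\theta)},\phi)$ for any homomorphism of admissible representations $\phi:(M;\rho)\to(M';\rho')$.

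First I would check that $\huaF_H^\theta$ is well defined on objects, i.e. that $(V\ot_\K M;(\rho\boxplus\theta)\circ\iota_H)$ really lies in $\ARep_\K(\huaS)$. This is exactly the content of Theorem \ref{bifunctor'}: by Lemma \ref{undeformed1}, $(V\ot_\K M;\rho\boxplus\theta)$ is an admissible representation of the Leibniz pair $\mathcal{S}\ltimes_\beta(\h\ot_\K A)$, and since $\iota_H:\huaS\to\mathcal{S}\ltimes_\beta(\h\ot_\K A)$ is a $\K$-Lie algebra homomorphism by Theorem \ref{twitst-iso}(b), the pullback $(\rho\boxplus\theta)\circ\iota_H$ is again admissible over $\huaS$, the anchor compatibility \eqref{eq:conanchor} being verified in the proof of Theorem \ref{bifunctor'}.

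Next I would verify well-definedness on morphisms. For $\phi:(M;\rho)\to(M';\rho')$ a homomorphism of admissible representations, I need $\Id_V\otimes\phi$ to be an $A$-module homomorphism intertwining $(\rho\boxplus\theta)\circ\iota_H$ and $(\rho'\boxplus\theta')\circ\iota_H$. Both statements are the special case $\psi=\Id_V$ of the morphism part of the bifunctor $\huaF_H$, and the intertwining computation is identical to the one carried out for $F_H$ in the proof of Theorem \ref{bifunctor}, now with $\theta=\theta'$ and $\psi=\Id_V$; it is purely mechanical. Functoriality then follows immediately from that of $\huaF_H$: one has $\huaF_H^\theta(\Id_M)=\Id_V\otimes\Id_M=\Id_{V\ot_\K M}$, and $\huaF_H^\theta(\phi'\circ\phi)=\Id_V\otimes(\phi'\circ\phi)=(\Id_V\otimes\phi')\circ(\Id_V\otimes\phi)=\huaF_H^\theta(\phi')\circ\huaF_H^\theta(\phi)$.

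There is no genuine obstacle here: the statement is a formal consequence of Theorem \ref{bifunctor'}, since fixing one argument of a bifunctor always yields a functor in the remaining argument, exactly as Corollary \ref{twisting-functors} was deduced from Theorem \ref{bifunctor}. The only computation that is not completely automatic, namely checking that $\Id_V\otimes\phi$ commutes with the twisted action $(\rho\boxplus\theta)\circ\iota_H$, is precisely the admissible-pair analogue of the morphism verification in Theorem \ref{bifunctor}, and presents no difficulty beyond bookkeeping.
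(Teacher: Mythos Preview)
Your proposal is correct and matches the paper's approach: the corollary is stated without proof as an immediate consequence of Theorem~\ref{bifunctor'}, obtained exactly as you describe by fixing the first argument of the bifunctor $\huaF_H$ at $(V;\theta)$, in direct analogy with how Corollary~\ref{twisting-functors} follows from Theorem~\ref{bifunctor}.
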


A special but very interesting case of the above result is that $(V;\theta)=(\h;\ad)$.

According to Corollaries \ref{Shen-Larsson -functors} and \ref{Shen-Larsson-functors'},   the bifunctors $F_H$  in Theorems \ref{bifunctor} and $\huaF_H$ given in Theorems   \ref{bifunctor'}
are the actions of monoidal categories.

\section{Representations of Cartan type Lie algebras}\label{sec:grsl}

From the definition of a crossed homomorphism we see that it is generally   hard to find nontrivial crossed homomorphisms. Next we will show you some examples of crossed homomorphisms and their tremendous power  in obtaining new irreducible modules via results in the previous section.

\subsection{Shen-Larsson functors of  Witt type}\label{subsec:w}

For $n\geq1$, recall the Witt algebra $\huaW_n=\Der (A_n)$ over the
Laurent polynomial algebra $A_n=\C[x_1^{\pm1}, \cdots,x_n^{\pm1}]$,  which can be interpreted as the Lie algebra of (complex-valued)
polynomial vector fields on an $n$-dimensional torus.
Let $\partial_i=\frac{\partial}{\partial x_i}$ be the partial derivation with respect to the variable $x_i$ for $i=1,2,\dots,n$, denote $d_i=x_i\partial_i$, and
$x^r=x_1^{r_1}x_2^{r_2}\cdots x_n^{r_n}$ for $r=(r_1,r_2,\cdots, r_n)^T\in\mathbb{Z}^n$. Then
$$\huaW_n= {\text{span}}\{x^rd_i\mid r\in\mathbb{Z}^n,
1\leq i\leq n\}$$ with the Lie bracket:
$$
[x^rd_i,x^sd_j]_{\huaW_n}=s_ix^{r+s}d_j-r_jx^{r+s}d_i,\quad \forall\,1\leq i,j\leq n, r,s\in \mathbb {Z}^n.
$$
Obviously,  $(A_n,\huaW_n, [\cdot,\cdot]_{\huaW_n},\Id)$ is a Lie-Rinehart algebra. Certainly $ (A_n; \Id)$ is the natural representation of the Lie-Rinehart algebra $(A_n,\huaW_n, [\cdot,\cdot]_{\huaW_n},\Id)$. Let $\g=\gl_n$ be the Lie algebra of all $n \times n$ complex matrices. Then $\huaG=\gl_n\otimes A_n$ is a Lie $A_n$-algebra.  For $1\leq i, j \leq n$, we use $E_{ij}$ to denote the $n\times n$ matrix with $1$
at the $(i, j)$ entry and zeros elsewhere.

\begin{lem}\label{CH-W}
 The linear map $H:\huaW_n\to  \gl_n\ot A_n$  defined by
$$
H(x^rd_j)= \sum_{i=1}^n r_i E_{ij}\ot x^r,\quad\forall r\in \Z^n, 1\leq j\leq n
$$
 is a crossed homomorphism from $\huaW_n$ to $\gl_n\otimes A_n$.
\end{lem}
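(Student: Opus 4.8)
The plan is to verify the defining identity \eqref{crossed-homo} directly on a basis, since both sides are bilinear in the two arguments of the bracket. The relevant action here is $\alpha$ from \eqref{eq:actional}, under which a vector field in $\huaW_n$ differentiates the $A_n$-factor of $\gl_n\ot A_n$ and leaves the matrix factor untouched, while the Lie bracket on $\h=\gl_n\ot A_n$ is $[g\ot a,h\ot b]=[g,h]_{\gl_n}\ot ab$. So I would take $x=x^rd_j$ and $y=x^sd_k$ with $r,s\in\Z^n$ and $1\le j,k\le n$, and compare $H[x^rd_j,x^sd_k]_{\huaW_n}$ with the right-hand side of \eqref{crossed-homo}.

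For the left-hand side I would use the Witt bracket $[x^rd_j,x^sd_k]_{\huaW_n}=s_jx^{r+s}d_k-r_kx^{r+s}d_j$ together with the definition of $H$, obtaining $s_j\sum_i(r_i+s_i)E_{ik}\ot x^{r+s}-r_k\sum_i(r_i+s_i)E_{ij}\ot x^{r+s}$. For the right-hand side, the two action terms are computed from $d_j(x^s)=s_jx^s$, so that $\alpha(x^rd_j)(x^s)=s_jx^{r+s}$; this yields $\alpha(x^rd_j)(H(x^sd_k))=s_j\sum_i s_iE_{ik}\ot x^{r+s}$ for the first term and $-\alpha(x^sd_k)(H(x^rd_j))=-r_k\sum_i r_iE_{ij}\ot x^{r+s}$ for the second.

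The remaining contribution is the commutator $[H(x^rd_j),H(x^sd_k)]_\h$, and here the only real bookkeeping is the matrix commutator $[E_{ij},E_{pk}]=\delta_{jp}E_{ik}-\delta_{ki}E_{pj}$; summing against $r_is_p$ and collapsing the Kronecker deltas produces $s_j\sum_i r_iE_{ik}\ot x^{r+s}-r_k\sum_p s_pE_{pj}\ot x^{r+s}$. I would write this single step out most carefully, since it is the place where an index slip is easiest.

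Finally I would add the three right-hand contributions. Grouping the $E_{ik}$-terms gives the common factor $s_j\sum_i(r_i+s_i)E_{ik}\ot x^{r+s}$, and grouping the $E_{ij}$-terms gives $-r_k\sum_i(r_i+s_i)E_{ij}\ot x^{r+s}$, which is exactly the left-hand side computed above. Conceptually, the Kronecker deltas coming from the $\gl_n$-commutator are precisely what upgrades the "diagonal" factors $s_i$ and $r_i$ produced by the two action terms into the mixed factors $r_i+s_i$ needed to match $H[x^rd_j,x^sd_k]_{\huaW_n}$, so the three terms conspire exactly as required and $H$ is a crossed homomorphism.
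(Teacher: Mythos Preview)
Your direct verification is correct: every step checks out, including the bracket computation $[x^rd_j,x^sd_k]_{\huaW_n}=s_jx^{r+s}d_k-r_kx^{r+s}d_j$, the two action terms via $d_j(x^s)=s_jx^s$, and the matrix commutator $[E_{ij},E_{pk}]=\delta_{jp}E_{ik}-\delta_{ki}E_{pj}$, whose Kronecker deltas produce exactly the cross terms $s_j\sum_i r_iE_{ik}$ and $-r_k\sum_p s_pE_{pj}$ needed so that the three right-hand contributions sum to the left-hand side.

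The paper, however, does not carry out this computation. Its proof is a one-line appeal to the literature: it observes that formula (2.5) in \cite{GLLZ} (equivalently (2.3) and Lemma~2.1 in \cite{LLZ}) already establishes that the map $x\mapsto (x,Hx)$ into the semidirect product $\huaW_n\ltimes_\alpha(\gl_n\ot A_n)$ is a Lie algebra homomorphism, and then invokes Theorem~\ref{twitst-iso}(b) to conclude that $H$ is a crossed homomorphism. So the paper trades an explicit calculation for a translation between the crossed-homomorphism condition and the homomorphism property of $\iota_H$, relying on work done elsewhere. Your argument is more elementary and entirely self-contained, at the cost of a page of index bookkeeping; the paper's argument is shorter but depends on external references and on the equivalence established in Theorem~\ref{twitst-iso}.
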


\begin{proof}This follows from (2.5) in \cite{GLLZ} (or (2.3) and Lemma 2.1 in \cite{LLZ}), and Theorem \ref{twitst-iso}.
\end{proof}

 By Lemma \ref{CH-W} and Corollary \ref{Shen-Larsson -functors}, we obtain the following result.

 \begin{cor}\label{cor:trivialm} We have a functor $F_{{H}}^{A_n}:\Rep_{\C}(\gl_n)\to \WRep_{\C}(\huaW_n)$  given by
$$
F_{H}^{A_n}(V;\theta)=(V\ot_\C{A_n}; ({\Id}\boxplus \theta)\circ\iota_{H}),\quad\forall(V;\theta)\in \Rep_{\C}(\gl_n).
$$
\end{cor}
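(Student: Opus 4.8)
The plan is to obtain this functor as a direct instantiation of the general machinery already established, namely Corollary \ref{Shen-Larsson -functors}, applied to the concrete Lie-Rinehart data at hand. First I would record the three inputs that Corollary \ref{Shen-Larsson -functors} requires. The Lie-Rinehart algebra is $(A_n,\huaW_n,[\cdot,\cdot]_{\huaW_n},\Id)$, whose anchor is the identity $\alpha=\Id$; this is exactly the structure noted just above Lemma \ref{CH-W}, together with its natural representation $(A_n;\Id)$. The auxiliary Lie algebra is $\g=\gl_n$ over $\K=\C$, so that $\gl_n\otimes_\C A_n$ is the associated Lie $A_n$-algebra on which $\huaW_n$ acts through the anchor as in \eqref{eq:actional}. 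The crossed homomorphism is the map $H:\huaW_n\to\gl_n\otimes A_n$ of Lemma \ref{CH-W}, which that lemma certifies satisfies \eqref{crossed-homo}.

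With these inputs in place, I would simply invoke Corollary \ref{Shen-Larsson -functors}. That corollary produces a functor $F_H^A:\Rep_\C(\gl_n)\to\WRep_\C(\huaW_n)$ sending $(V;\theta)$ to $(V\otimes_\C A_n;(\alpha\boxplus\theta)\circ\iota_H)$. Since the anchor here is $\alpha=\Id$, the composite $\alpha\boxplus\theta$ specializes to $\Id\boxplus\theta$, and the assignment coincides verbatim with the formula in the statement, with $A=A_n$. Thus the claimed functor $F_H^{A_n}$ is precisely this instance of $F_H^A$, and the corollary follows.

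There is essentially no independent obstacle here: all of the verification --- that $(\Id\boxplus\theta)\circ\iota_H$ defines a weak representation of $\huaW_n$, and that the construction is functorial in $(V;\theta)$ --- has already been discharged once and for all in Lemma \ref{undeformed}, Corollary \ref{Pullback}, Theorem \ref{bifunctor}, and its Corollary \ref{Shen-Larsson -functors}. The only case-specific content is the crossed-homomorphism identity for $H$, which is the substance of Lemma \ref{CH-W}; granting that, the present statement is a pure specialization. If anything merits a second look, it is merely confirming that the anchor of the Witt Lie-Rinehart algebra is indeed $\Id$, so that the general $\alpha$ collapses to the identity and matches the notation $\Id\boxplus\theta$ used in the statement.
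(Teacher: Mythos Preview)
Your proposal is correct and matches the paper's approach exactly: the paper derives this corollary immediately from Lemma \ref{CH-W} and Corollary \ref{Shen-Larsson -functors}, which is precisely the specialization you carry out (with $\alpha=\Id$ for the Witt Lie-Rinehart algebra).
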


\begin{rmk}{ By forgetting the $A_n$-module structure, the corresponding functor $F_H^{A_n}$ is the well-known  Shen-Larsson   functor of type $(\huaW_n,\gl_n)$, introduced  by
%Rudakov \cite{Ru1, Ru2},
Shen \cite{Sh}  (over polynomial algebras), and   Larsson   \cite{L3} (over  Laurent polynomial algebras), independently in   different settings. For any simple $\gl_n$-module $V$ the simplicity of the $\huaW_n$-module
$F_{H}^{A_n}(V;\theta)$ was determined in \cite{E1,GZ,LZ0}. In particular, simple $\huaW_n$-modules of this class (with $V$ to be simple finite-dimensional $\gl_n$-modules) are all the simple Harish-Chandra $\huaW_n$-modules \cite{BF0}.
}
\end{rmk}

Let ${\huaA}_n=\C[x_1^{\pm1},\cdots,x_n^{\pm1},\partial_1,\cdots,\partial_n]$ be the Weyl algebra, which is  the universal enveloping algebra of  the Lie-Rinehart algebra $(A_n,\huaW_n, [\cdot,\cdot]_{\huaW_n},\Id)$. Let $(P;\rho)$ be a  representation of $\huaA_n$. It is obvious that  $(P;\rho|_{\huaW_n})$ is a $\huaW_n$-module. By   Lemma \ref{CH-W} and Corollary \ref{twisting-functors}, we obtain the following result.

\begin{cor}
We have a functor  $F_{{H}}^P:\Rep_{\C}(\gl_n)\to \WRep_{\C}(\huaW_n)$  given by
$$
F_{H}^P(V;\theta)=(V\ot_\C{P}; ({\rho|_{\huaW_n}}\boxplus\theta )\circ\iota_{H}),\quad\forall(V;\theta)\in \Rep_{\C}(\gl_n).
$$
\end{cor}

\begin{rmk}{  The functor $F_{H}^P$,  introduced by Liu, Lu and Zhao in \cite{LLZ},  is  a generalization of the Shen-Larsson  functor of type $(\huaW_n,\gl_n)$, which gives a class of new simple modules over $\huaW_n$. This class of simple $\huaW_n$-modules was used in the classification of simple  $\huaW_n$-modules that are finitely generated as modules over its Cartan subalgebra (see \cite{GLLZ}).
}
\end{rmk}

Next we take $\g=\C$, the one-dimensional trivial Lie algebra.  Let $p=(p_1,p_2,\cdots,p_n)\in \C[t_1^{\pm1}]\times  \C[t_2^{\pm1}]\times\cdots\times \C[t_n^{\pm1}],  q\in\C$.
Similar to the automorphism  $\sigma_b$ in Section 2 of  \cite{TZ}, we can easily see that
the linear map
$$\aligned  &\huaW_n\to \huaW_n\ltimes_{\Id} A_n,\\
&x^rd_i\mapsto x^r(d_i+p_i)+q r_ix^r,
\endaligned$$
is a Lie algebra homomorphism.
By Theorem \ref{twitst-iso} we see that the linear map
$$\aligned  H_{p,q}: &\huaW_n\to  \g\otimes A_n \cong A_n,\\
&
x^rd_i\mapsto (p_i+q r_i)x^r,\quad\forall r\in \Z^n, 1\leq i\leq n,
\endaligned$$is a crossed homomorphism from $\huaW_n$ to $A_n$. In fact, $H_{p,q}\in \Der_{\C}(\huaW_n,A_n)$. By Lemma \ref{CH-W} and Corollary \ref{twisting-functors}, we obtain the following result.

\begin{cor}\label{cor-5.6}
We have  a functor $F_{p,q} :\WRep_{\C}(\huaW_n)\to \WRep_{\C}(\huaW_n)$  defined by
$$
F_{p,q}({M};\rho)=({M} ;{\rho}\circ\iota_{H_{p,q}}),\quad\forall({M};\rho)\in\WRep_{\C}(\huaW_n).
$$
\end{cor}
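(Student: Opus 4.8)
The plan is to recognize $F_{p,q}$ as a special case of the twisting functor of Corollary \ref{twisting-functors}, namely the one attached to the trivial one-dimensional Lie algebra $\g=\C$ together with its natural one-dimensional representation. The only genuine input needed is that $H_{p,q}\colon\huaW_n\to\g\ot_\C A_n\cong A_n$ is a crossed homomorphism with respect to the action $\alpha=\Id$ of $\huaW_n$ on $\g\ot_\C A_n$; this is already settled above, since $\iota_{H_{p,q}}$ was exhibited as a Lie algebra homomorphism into $\huaW_n\ltimes_{\Id}A_n$, and Theorem \ref{twitst-iso}(b) then forces $H_{p,q}$ to be a crossed homomorphism (equivalently, a derivation, because $\g\ot_\C A_n$ is abelian).

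First I would fix the representation $(V;\theta)=(\C;\theta)$ of $\g=\C$, where $\theta$ is the nonzero one-dimensional representation with $\theta(1)=\Id_\C$. Applying Corollary \ref{twisting-functors} with $\L=\huaW_n$, $A=A_n$, $\g=\C$, the crossed homomorphism $H_{p,q}$, and this $(V;\theta)$ yields a functor $F^{\theta}_{H_{p,q}}\colon\WRep_\C(\huaW_n)\to\WRep_\C(\huaW_n)$ sending $({M};\rho)$ to $(\C\ot_\C M;(\rho\boxplus\theta)\circ\iota_{H_{p,q}})$.

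Next I would identify this functor with the one in the statement by means of the canonical $A_n$-module isomorphism $\C\ot_\C M\cong M$, $1\ot m\mapsto m$. Transporting the action along this isomorphism, a short computation on a generator gives $(\rho\boxplus\theta)\bigl(\iota_{H_{p,q}}(x^rd_i)\bigr)(m)=\rho(x^rd_i)m+(p_i+qr_i)x^r m$. This is precisely the operator obtained by extending $\rho$ to the semidirect product $\huaW_n\ltimes_{\Id}A_n$, letting $A_n$ act on $M$ by multiplication, and then precomposing with $\iota_{H_{p,q}}$; this extension is what the abbreviated notation $\rho\circ\iota_{H_{p,q}}$ in the statement refers to. Well-definedness on objects and morphisms, naturality, and functoriality are then inherited verbatim from Corollary \ref{twisting-functors}, so no separate verification is needed.

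The one point that needs care, and the main place a careless reading goes astray, is the choice of $\theta$: one must take the natural representation $\theta(1)=\Id_\C$ and not the trivial representation $\theta=0$. Indeed, with $\theta=0$ the summand $\theta(g)v\ot am$ in the definition of $\rho\boxplus\theta$ vanishes, the twist by $H_{p,q}$ disappears, and $F_{p,q}$ degenerates to the identity functor. Once this choice is pinned down, I expect no further obstacle, the statement being a transparent specialization of Corollary \ref{twisting-functors}.
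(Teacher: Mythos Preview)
Your proposal is correct and follows the same route as the paper, which simply cites Corollary~\ref{twisting-functors} together with the fact that $H_{p,q}$ is a crossed homomorphism. Your careful identification of the correct one-dimensional representation $\theta(1)=\Id_\C$ and of the implicit extension of $\rho$ to $\huaW_n\ltimes_{\Id}A_n$ fills in details the paper leaves to the reader.
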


\begin{rmk}{   By forgetting the $A_n$-module structure, the corresponding functor $F_{p,q}$ is just the twisting functor in the $\huaW_n$-module category introduced  in \cite{LGZ,LZ,TZ},  where a lot of new simple modules were obtained over the Virasoro algebra and $\huaW_n$.
}

\end{rmk}

\subsection{Shen  functors of  divergence zero type}

In this section we assume that $n\ge 2$.
Let us recall the divergence map $ \diver: \huaW_n\to A_n$ with $
x^rd_i\mapsto r_ix^r,$  for all $r\in \Z^n.$
It is well-known that
$$
\huaS_n=\{w\in \huaW_n\mid {\rm div}(w)=0\}
$$
  is a Lie subalgebra of $\huaW_n$, called the  Lie algebra of divergence
zero vector fields on an $n$-dimensional torus.  Let $d_{ij}(r)=r_jx^rd_i-r_ix^rd_j$. Then
$$
\huaS_n={\rm  span}_{\C}\{d_i, d_{ij}(r)\mid i,j=1,2\cdots,n\}.
$$
with the  Lie bracket
\begin{eqnarray*}
~[d_k,d_{ij}(r)]_{\huaW_n}&=&r_kd_{ij}(r),\\
~[d_{ij}(r),d_{pq}(s)]_{\huaW_n}&=&r_js_pd_{iq}(r+s)-r_js_qd_{ip}(r+s)-r_is_pd_{jq}(r+s)+r_is_qd_{jp}(r+s),
\end{eqnarray*}
for $r,s\in\Z^N, i,j,p,q=1,\cdots,n$.

Note that $\huaS_n$ is not a Lie-Rinehart subalgebra since $\huaS_n$ is not an $A_n$-module. It is straightforward to  see that $(A_n,\huaS_n,[\cdot,\cdot]_{\huaW_n},\Id)$ is a Leibniz pair.

Recall that   $\sln_n$ is the Lie subalgebra of $\gl_n$ consisting
of all traceless complex matrices.
The restriction $H|_{\huaS_n}$ of the crossed homomorphism $H$ in Lemma \ref{CH-W} is  a crossed homomorphism from
$\huaS_n$ to $\sln_n\otimes A_n$.
By Corollary \ref{Shen-Larsson-functors'}, we obtain the following result.

\begin{cor}
We have a functor $\huaF_{{H}}^{A_n}:\Rep_{\C}(\sln_n)\to \ARep_{\C}(\huaS_n)$  defined by
$$
\huaF_{H}^{A_n}(V;\theta)=(V\ot_\C{A_n}; ({\Id}\boxplus \theta)\circ\iota_{H}),\quad\forall(V;\theta)\in \Rep_{\C}(\sln_n).
$$
\end{cor}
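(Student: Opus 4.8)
The plan is to reduce the statement to Corollary \ref{Shen-Larsson-functors'}, so the only thing that genuinely needs justification is the assertion made just above it: that the restriction $H|_{\huaS_n}$ of the crossed homomorphism $H$ of Lemma \ref{CH-W} is a crossed homomorphism from the Leibniz pair $(A_n,\huaS_n,[\cdot,\cdot]_{\huaW_n},\Id)$ into $\sln_n\otimes A_n$ with respect to the restricted action. Once this is in hand, one takes the natural admissible representation $(A_n;\Id)$ of $\huaS_n$ (here the anchor $\beta$ is $\Id$) and applies Corollary \ref{Shen-Larsson-functors'} verbatim with $\huaS=\huaS_n$ and $\h=\sln_n$, yielding exactly the functor $\huaF_H^{A_n}$ with $(V;\theta)\mapsto(V\ot_\C A_n;(\Id\boxplus\theta)\circ\iota_H)$.

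The first, and essentially only nontrivial, point is that $H|_{\huaS_n}$ lands in $\sln_n\otimes A_n$ rather than merely in $\gl_n\otimes A_n$. I would verify this by checking that the composite of $H$ with the trace map $\mathrm{tr}\otimes\Id_{A_n}:\gl_n\otimes A_n\to A_n$ equals the divergence. Indeed, from $H(x^rd_j)=\sum_{i=1}^n r_iE_{ij}\otimes x^r$ the matrix $\sum_i r_iE_{ij}$ has trace $r_j$, so $(\mathrm{tr}\otimes\Id_{A_n})(H(x^rd_j))=r_jx^r=\diver(x^rd_j)$. By linearity $(\mathrm{tr}\otimes\Id_{A_n})\circ H=\diver$ on all of $\huaW_n$, and since $\huaS_n=\Ker(\diver)$ by definition we conclude $H(\huaS_n)\subseteq\Ker(\mathrm{tr}\otimes\Id_{A_n})=\sln_n\otimes A_n$.

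It then remains to observe that the crossed homomorphism identity is inherited by restriction. Here $\huaS_n$ is a Lie subalgebra of $\huaW_n$, so $[x,y]_{\huaS_n}=[x,y]_{\huaW_n}$; the target $\sln_n\otimes A_n$ is a Lie subalgebra of $\gl_n\otimes A_n$ because $\sln_n$ is closed under the commutator; and the action $\alpha(w)(g\otimes a)=g\otimes w(a)$ touches only the $A_n$-factor, hence preserves $\sln_n\otimes A_n$ and restricts to the required action of the Leibniz pair $\huaS_n$ on $\sln_n\otimes A_n$. Since the defining relation \eqref{crossed-homo} is an identity for each individual pair $(x,y)$, the fact that $H$ satisfies it on $\huaW_n$ (Lemma \ref{CH-W}, via Theorem \ref{twitst-iso}) immediately gives that $H|_{\huaS_n}$ satisfies it on $\huaS_n$, all four terms now lying in $\sln_n\otimes A_n$ by the previous paragraph.

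I do not expect a real obstacle here: the entire content sits in the trace computation $(\mathrm{tr}\otimes\Id_{A_n})\circ H=\diver$, which pins the image inside $\sln_n\otimes A_n$, after which the functor is produced mechanically by Corollary \ref{Shen-Larsson-functors'}. The one bookkeeping subtlety worth stating explicitly is that $\huaS_n$ is only a Leibniz pair and not a Lie-Rinehart algebra, since it fails to be an $A_n$-submodule of $\huaW_n$; this is precisely why the admissible-representation machinery (Theorem \ref{bifunctor'} and Corollary \ref{Shen-Larsson-functors'}) is the correct tool rather than the Lie-Rinehart version of Section \ref{sec:wr}.
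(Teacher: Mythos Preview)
Your proposal is correct and follows exactly the paper's approach: the paper simply asserts that $H|_{\huaS_n}$ is a crossed homomorphism from $\huaS_n$ to $\sln_n\otimes A_n$ and then invokes Corollary~\ref{Shen-Larsson-functors'}. Your trace computation $(\mathrm{tr}\otimes\Id_{A_n})\circ H=\diver$ is a clean way to justify the image claim that the paper leaves implicit, and your remark that the Leibniz-pair framework (rather than the Lie-Rinehart one) is required here is exactly the point the paper makes.
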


\begin{rmk}{  The functor $\huaF_{H}^{A_n}$ is the well-known Shen-Larsson  functor of type $(\huaS_n,\sln_n)$, introduced  by
%Rudakov \cite{Ru2},
Shen  over polynomial algebras \cite{Sh} and further studied in \cite{BT,T}  over Laurent polynomial algebras. }
\end{rmk}

Let $(P;\rho)$ be a  representation of $\huaA_n$. It follows that $(P;\rho|_{\huaS_n})$ is an admissible representation of $\huaS_n$ since $\huaS_n\subset \huaA_n$. By Theorem \ref{bifunctor'}, we obtain the following result.

\begin{cor}
We have a functor  $\huaF_{{H}}^P:\Rep_{\C}(\sln_n)\to \ARep_{\C}(\huaS_n)$  defined by
$$
\huaF_{H}^P(V;\theta)=(V\ot_\C{P}; ({\rho|_{\huaS_n}}\boxplus \theta)\circ\iota_{H}),\quad\forall(V;\theta)\in \Rep_{\C}(\sln_n).
$$

\end{cor}

\begin{rmk}{  The functor $\huaF_{H}^P$  was introduced in   \cite{DGYZ} which is a generalization of the Shen-Larsson  functor of type $(\huaS_n,\sln_n)$, to give a class of new simple modules over $\huaS_n$.
}
\end{rmk}

\subsection{Shen  functors of Hamiltonian type}\label{hh}
For $r\in\Z^{2n}$, let
$$
h(r) = \sum_{i=1}^n(r_{n+i}x^r\partial_{i}-r_{i}x^r\partial_{n+i})\in \huaW_{2n}.
$$
It is well-known that
$
\huaH_n={\rm Span}_\C\{h(r)\mid r\in \Z^{2n}\}
$  is a Lie  subalgebra of $\huaW_{2n}$, with
$$
[h(r),h(s)]_{\huaW_{2n}}=\sum_{i=1}^n(r_{n+i}s_i-s_{n+i}r_{i})h(r+s),\quad\forall r,s\in \Z^{2n}.$$
This  Lie algebra $\huaH_n$ is called the Lie algebra of Hamiltonian vector fields on a $2n$-dimensional torus.
Note that $\huaH_n$ is not a Lie-Rinehart algebra  since $\huaH_n$ is not an $A_{2n}$-module.  It is straightforward to  see that $(A_n,\huaH_n,[\cdot,\cdot]_{\huaW_{2n}},\Id)$ is a Leibniz pair.

Let  $\mathfrak{sp}_{2n}$ be the Lie subalgebra of $\gl_{2n}$ consisting
of all symplectic matrices.
The restriction $H|_{\huaH_n}$ of the crossed homomorphism $H$ in Lemma \ref{CH-W} is  a linear map
$\huaH_n\to \mathfrak{sp}_{2n}\otimes A_{2n}$ given by
$$
H(h(r))=\begin{pmatrix}
  r_1r_{n+1}&\cdots&r_1r_{2n}&-r_1r_{1}&\cdots&-r_1r_{n}\\
  \vdots&\cdots&\vdots&\vdots&\cdots&\vdots\\
  r_nr_{n+1}&\cdots&r_nr_{2n}&-r_nr_{1}&\cdots&-r_nr_{n}\\
  r_{n+1}r_{n+1}&\cdots&r_{n+1}r_{2n}&-r_{n+1}r_{1}&\cdots&-r_{n+1}r_{n}\\
  \vdots&\cdots&\vdots&\vdots&\cdots&\vdots\\
  r_{2n}r_{n+1}&\cdots&r_{2n}r_{2n}&-r_{2n}r_{1}&\cdots&-r_{2n}r_{n}\\
\end{pmatrix}\ot x^r\in  \mathfrak{sp}_{2n}\otimes  A_{2n},
$$ which is certainly a crossed homomorphism from $\huaH_n$ to $\mathfrak{sp}_{2n}\otimes A_{2n}$.
By Corollary \ref{Shen-Larsson-functors'}, we obtain the following result.

\begin{cor}\label{cor-5.12}
We have a functor $\huaF_{{H}}^{A_{2n}}:\Rep_{\C}(\mathfrak{sp}_{2n})\to \ARep_{\C}(\huaH_n)$  defined by
$$
\huaF_{H}^{A_{2n}}(V;\theta)=(V\ot_\C{A_{2n}}; ({\Id}\boxplus \theta)\circ\iota_{H}),\quad\forall(V;\theta)\in \Rep_{\C}(\mathfrak{sp}_{2n}).
$$

\end{cor}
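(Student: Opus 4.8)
The plan is to obtain this corollary as an immediate instance of Corollary \ref{Shen-Larsson-functors'}, applied to the Leibniz pair $(A_{2n},\huaH_n,[\cdot,\cdot]_{\huaW_{2n}},\Id)$, the $\K$-Lie algebra $\h=\mathfrak{sp}_{2n}$, and the restricted map $H|_{\huaH_n}$, with the natural admissible representation $(A_{2n};\Id)$ playing the role of $(A;\beta)$. Thus the entire content of the proof reduces to a single claim: that $H|_{\huaH_n}$ is a crossed homomorphism from $\huaH_n$ to $\mathfrak{sp}_{2n}\otimes A_{2n}$. Once this is established, the functor $\huaF_H^{A_{2n}}$ together with its prescribed action on objects is produced verbatim by that corollary, so no further categorical verification is required.

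First I would record two soft observations. Since $H$ from Lemma \ref{CH-W} satisfies the crossed homomorphism identity \eqref{crossed-homo} for all $x,y\in\huaW_{2n}$, it does so in particular for $x,y\in\huaH_n$; hence $H|_{\huaH_n}$ is automatically a crossed homomorphism from the Lie subalgebra $\huaH_n$ into $\gl_{2n}\otimes A_{2n}$ with respect to the restricted action. Moreover $\mathfrak{sp}_{2n}\otimes A_{2n}$ is a Lie $A_{2n}$-subalgebra of $\gl_{2n}\otimes A_{2n}$ preserved by the action $\Id$, since $\Id$ acts only on the $A_{2n}$-factor via $w\cdot(X\otimes f)=X\otimes w(f)$ and $[\mathfrak{sp}_{2n},\mathfrak{sp}_{2n}]\subseteq\mathfrak{sp}_{2n}$. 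Consequently the only genuine point is that the image of $H|_{\huaH_n}$ already lands in $\mathfrak{sp}_{2n}\otimes A_{2n}$; granting this, the crossed homomorphism identity for the corestricted map is inherited from the one into $\gl_{2n}\otimes A_{2n}$.

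The heart of the argument, and the step I expect to demand real (if routine) work, is therefore to verify that each matrix $H(h(r))$ is symplectic. I would compute $H(h(r))$ by expressing $h(r)$ in the basis $\{x^s d_j\}$, applying the formula $H(x^s d_j)=\sum_k s_k E_{kj}\otimes x^s$ of Lemma \ref{CH-W}, and collecting the coefficients into the displayed $2n\times 2n$ matrix $M(r)$. Writing $a=(r_1,\dots,r_n)^{T}$ and $b=(r_{n+1},\dots,r_{2n})^{T}$, this matrix takes the rank-one block form
\begin{equation*}
M(r)=\begin{pmatrix} a b^{T} & -a a^{T}\\ b b^{T} & -b a^{T}\end{pmatrix}.
\end{equation*}
With the standard symplectic form $J=\begin{pmatrix} 0 & I_n\\ -I_n & 0\end{pmatrix}$, membership $M(r)\in\mathfrak{sp}_{2n}$ is equivalent to $JM(r)$ being symmetric, and a one-line computation gives
\begin{equation*}
JM(r)=\begin{pmatrix} b b^{T} & -b a^{T}\\ -a b^{T} & a a^{T}\end{pmatrix},
\end{equation*}
whose diagonal blocks $bb^{T}$ and $aa^{T}$ are symmetric and whose off-diagonal blocks are transposes of one another. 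Hence $JM(r)$ is symmetric and $M(r)\in\mathfrak{sp}_{2n}$, so the image of $H|_{\huaH_n}$ lies in $\mathfrak{sp}_{2n}\otimes A_{2n}$.

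Having established that $H|_{\huaH_n}$ is a crossed homomorphism into $\mathfrak{sp}_{2n}\otimes A_{2n}$, I would conclude by invoking Corollary \ref{Shen-Larsson-functors'} with $\huaS=\huaH_n$, $A=A_{2n}$, $\h=\mathfrak{sp}_{2n}$, and $H=H|_{\huaH_n}$, which yields precisely the functor $\huaF_H^{A_{2n}}(V;\theta)=(V\ot_\C A_{2n};(\Id\boxplus\theta)\circ\iota_H)$. The only anticipated nuisances are bookkeeping ones, namely rewriting $h(r)$ in the $\{x^s d_j\}$ basis so that the coefficients assemble into $M(r)$ and keeping the $A_{2n}$-indexing consistent throughout, neither of which affects the structure of the argument.
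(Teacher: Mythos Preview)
Your proposal is correct and follows exactly the paper's approach: the paper states the explicit matrix $H(h(r))$, asserts it lies in $\mathfrak{sp}_{2n}\otimes A_{2n}$ and is therefore a crossed homomorphism, and then invokes Corollary~\ref{Shen-Larsson-functors'}. You supply the one detail the paper leaves implicit, namely the verification that $JM(r)$ is symmetric, and your block computation is correct.
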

\begin{rmk}{   $\huaF_{H}^{A_{2n}}$ is the well-known Shen-Larsson  functor of type $(\huaH_n,\mathfrak{sp}_{2n})$, introduced  by
%Rudakov \cite{Ru2} and
Shen  over polynomial algebras \cite{Sh}. Note that there are no results for $\huaH_n$ similar to those in \cite{E1, GZ, LZ0}.}
\end{rmk}

\subsection{Actions of monoidal categories  for generalized  Cartan type }

Let $A$ be a commutative associative $\C$-algebra, and let $\Delta$ be a nonzero $\C$-vector space of commuting $\C$-derivations of $A$. Let us first recall the  construction of the generalized Witt algebras from \cite{Pa}. The tensor product $A\Delta:=A\otimes_{\C}\Delta$ acts on $A$ by
$$
a\ot\partial: x\mapsto a\partial(x),\quad  a,x\in A, \partial\in \Delta.
$$
Since $A$ is commutative, this gives rise to a linear transformation
$
\al: A\Delta\to \Der_{\C}(A).
$
Define a bracket $[\cdot,\cdot]_{A\Delta}$ on $A\Delta$ by
$$
[a\partial,b\delta]_{A\Delta}=a\partial(b)\delta- b\delta(a)\partial,\quad\forall a,b\in A, \partial,\delta\in\Delta,
$$
which gives a Lie algebra structure on $A\Delta$. Then $\al$ is
clearly an action of $A\Delta$ on the commutative Lie algebra $A$.  Assume that $\dim_{\C}\Delta <\infty$. Then there are   $\partial_1,\cdots,\partial_n\in \Delta$ such that $A\Delta$ is a free $A$-module with basis  $\{\partial_1,\cdots,\partial_n\}$ (see \cite{Z}). We denote this Lie algebra by $\huaW_{n}(A,\Delta)$. Note that $(A,\huaW_{n}(A,\Delta),[\cdot,\cdot]_{_{A\Delta}},\al)$ is a Lie-Rinehart algebra.

Now we have a generalization of Lemma \ref{CH-W}.

\begin{lem}\label{CH-GW}
The linear map $H: \huaW_n(A,\Delta)\to  \gl_n \ot A$  defined by
$$
H\left(\sum_{i=1}^n a_i\partial_i\right)=\sum_{i=1}^n\sum_{j=1}^n E_{ij}\ot\partial_i (a_j),\quad a_i\in A
$$
 is a crossed homomorphism from $\huaW_n(A,\Delta)$ to $\gl_n \ot A$.
\end{lem}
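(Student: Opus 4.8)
The plan is to verify the crossed-homomorphism identity \eqref{crossed-homo} directly, with respect to the action \eqref{eq:actional}, namely $\al(x)(g\ot a)=g\ot\al(x)(a)$ for $x\in\huaW_n(A,\Delta)$, $g\in\gl_n$, $a\in A$; equivalently, by Theorem \ref{twitst-iso}(b) one may instead check that $\iota_H(x)=(x,Hx)$ is a Lie algebra homomorphism into $\huaW_n(A,\Delta)\ltimes_{\al}(\gl_n\ot A)$, which amounts to the same computation. Fix $x=\sum_i a_i\partial_i$ and $y=\sum_j b_j\partial_j$ as arbitrary elements. Two facts will be used repeatedly: the derivations $\partial_i$ commute with one another, and $A$ is commutative.

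First I would compute the left-hand side. From the bracket one gets $[x,y]_{A\Delta}=\sum_k c_k\partial_k$ with $c_k=\sum_i\big(a_i\partial_i(b_k)-b_i\partial_i(a_k)\big)$, hence
\[
H[x,y]_{A\Delta}=\sum_{k,l}E_{kl}\ot\partial_k(c_l).
\]
Expanding $\partial_k(c_l)$ by the Leibniz rule and turning each second-order $\partial_k\partial_i$ into $\partial_i\partial_k$ (this is where the commutativity of $\Delta$ enters) splits the coefficient of $E_{kl}$ into four sums over $i$: the first-order products $\partial_k(a_i)\partial_i(b_l)$ and $-\partial_k(b_i)\partial_i(a_l)$, and the second-order terms $a_i\partial_i\partial_k(b_l)$ and $-b_i\partial_i\partial_k(a_l)$.

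Next I would expand the right-hand side $\al(x)(Hy)-\al(y)(Hx)+[Hx,Hy]$. A direct computation gives $\al(x)(Hy)=\sum_{k,l,i}E_{kl}\ot a_i\partial_i\partial_k(b_l)$ and $\al(y)(Hx)=\sum_{k,l,i}E_{kl}\ot b_i\partial_i\partial_k(a_l)$, which are exactly the two second-order sums above. For the bracket I would use $[g\ot u,h\ot v]_{\gl_n\ot A}=[g,h]_{\gl_n}\ot uv$ and $[E_{ij},E_{pq}]=\delta_{jp}E_{iq}-\delta_{qi}E_{pj}$; after relabelling the indices this contributes $\sum_{k,l}E_{kl}\ot\big(\sum_i\partial_k(a_i)\partial_i(b_l)-\sum_i\partial_i(a_l)\partial_k(b_i)\big)$, which matches the two first-order sums — the last one only after invoking commutativity of $A$ to rewrite $\partial_i(a_l)\partial_k(b_i)$ as $\partial_k(b_i)\partial_i(a_l)$. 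Comparing coefficients of $E_{kl}$ shows the two sides coincide, so \eqref{crossed-homo} holds. There is no conceptual obstacle here; the only things to watch are the bookkeeping of the four index-laden sums and the two separate appeals to commutativity — of the $\partial_i$ for the second-order terms and of $A$ for the last first-order term. Specializing to $A=A_n$ and $\Delta=\mathrm{span}\{d_1,\dots,d_n\}$ recovers Lemma \ref{CH-W}.
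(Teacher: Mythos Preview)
Your proposal is correct and follows exactly the approach the paper indicates: the paper's proof reads in full ``It is straightforward but tedious to verify the above formula. We omit the details.'' You have carried out precisely that verification, and your bookkeeping---the four-term split of $\partial_k(c_l)$, the identification of the two second-order sums with $\al(x)(Hy)-\al(y)(Hx)$, and the relabelling of the $[E_{ij},E_{pq}]$ contribution to match the first-order terms---is accurate.
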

\begin{proof}
It is straightforward but tedious to verify the above formula. We omit the details.
\end{proof}

Similar to  Corollary \ref{cor:trivialm}, by Lemma \ref{CH-GW}  and Theorem \ref{bifunctor} we obtain the following result.

 \begin{cor}\label{cor:gw}
We have a functor $F_{{H}}^{A}:\Rep_{\C}(\gl_n)\to \WRep_{\C}(\huaW_n(A,\Delta))$  defined by
$$
F_{H}^A(V;\theta)=(V\ot_\C{A}; ({\alpha}\boxplus \theta)\circ\iota_{H}),\quad\forall(V;\theta)\in \Rep_{\C}(\gl_n).
$$
 \end{cor}

\begin{rmk}{
\begin{enumerate}
\item If $A=\C[x_1^{\pm1},\cdots,x_n^{\pm1}]$ and  $\Delta={\rm Span}_\C\{x_1\frac{\partial}{\partial x_1},\cdots, x_n\frac{\partial}{\partial x_n}\}$, $\huaW_n(A,\Delta)$ is the standard Witt algebra $\huaW_n$ and the corresponding $F_{H}^A$  is the Shen-Larsson  functor of type $(\huaW_n,\gl_n)$.

 \item If $A=\C[x_1^{\pm1},\cdots,x_n^{\pm1}]$ and  $\bar\Delta={\rm Span}_\C\{\frac{\partial}{\partial x_1},\cdots, \frac{\partial}{\partial x_n}\}$, $\huaW_n(A,\bar\Delta)$ is also the standard Witt algebra $\huaW_n$. However,  the corresponding Shen-Larsson  functor $\bar F_{H}^A$  is different from the standard $F_{H}^A$ except on the category of finite-dimensional $\gl_n$-modules. This was pointed out by Liu, Lu and Zhao in \cite{LLZ}.

 \item If $A$ is taken to be a polynomial algebra with finitely many variables $x_i$ together with some $x_i^{-1}$, $\Delta$ to be some mixed differential operators w.r.t $x_i$,  the Lie algebra  $\huaW_n(A,\Delta)$ was
introduced by Xu \cite{Xu}.  The corresponding Shen-Larsson  functor $F_{H}^A$ was introduced and studied by  Zhao \cite{Zhy}, generalizing Rao's results in \cite{E1}.

\item Under certain finite conditions,  the functor $F_{H}^A$  is the Shen-Larsson  functor $\huaW_n(A,\Delta)$ introduced and studied by Skryabin in \cite{S2}.

 \item Let $A$ be the coordinate ring of an irreducible affine variety and  $\Delta$ certain subalgebra of $\Der (A)$. The corresponding Shen-Larsson  functor $F_{H}^A$  have been introduced and studied in  \cite{BF,BFN} to give new simple modules over $\huaW_n(A,\Delta)$.
\end{enumerate}
}
\end{rmk}

Now let us define the divergence map $\diver: \huaW_n(A,\Delta)\to A$ to be the $\C$-linear
extension of
$$
\diver(a\partial) = \partial(a),\quad \forall a\in A, \partial\in \Delta.
$$
Let
$
\huaS_n(A,\Delta)=\{w\in A\Delta\mid {\rm div}(w)=0\}.
$
Then $\huaS_n(A,\Delta)$ is a Lie subalgebra of $\huaW_n(A,\Delta)$, see \cite{BP} for more details. If $A=\C[x_1^{\pm1},\cdots,x_n^{\pm1}]$ and  $\Delta={\rm Span}_\C\{x_1\frac{\partial}{\partial x_1},\cdots, x_n\frac{\partial}{\partial x_n}\}$, $\huaS_n(A,\Delta)$ is the Lie algebra $S_n$ of divergence
zero vector fields on an $n$-dimensional torus.

Note that $\huaS_n(A,\Delta)$ is not a Lie-Rinehart subalgebra since $\huaS_n(A,\Delta)$ is not an $A$-module. It is straightforward to  see that $(A,\huaS_n(A,\Delta),[\cdot,\cdot]_{A\Delta},\Id)$ is a Leibniz pair.

It is clear that $H|_{\huaS_n(A,\Delta)}$ is a crossed homomorphism from $\huaS_n(A,\Delta)$ to $\sln_n \ot A$.
Similar to  Corollary \ref{cor-5.6}, by Lemma \ref{CH-GW}  and Corollary \ref{Shen-Larsson-functors'}, we obtain the following result.

\begin{cor}\label{cor:gs}
We have  a functor $\huaF_{{H}}^{A}:\Rep_{\C}(\sln_n)\to \ARep_{\C}(\huaS_n(A,\Delta))$  defined by
$$
\huaF_{H}^A(V;\theta)=(V\ot_\C{A}; ({\alpha}\boxplus \theta)\circ\iota_{H}),\quad\forall(V;\theta)\in \Rep_{\C}(\sln_n).
$$
\end{cor}

Now let us define a map $D: A\to \huaW_{2n}(A,\Delta) $ to be the linear
extension of
$$
D(a) = \sum_{i=1}^n(\partial_{i}(a)\partial_{n+i}-\partial_{n+i}(a)\partial_{i}),\quad \forall a\in A.
$$
Let
$
\huaH_n(A,\Delta)=\{D(a)\mid a\in A\}.
$
Then $\huaH_n(A,\Delta)$ is a Lie  subalgebra of $\huaW_{2n}(A,\Delta)$, with
$$
[D(a),D(b)]_{A\Delta}=D\left(\sum_{i=1}^n(\partial_{i}(a)\partial_{n+i}(b)-\partial_{n+i}(a)\partial_{i}(b))\right),\quad\forall a,b\in A.
$$
If $A=\C[x_1^{\pm1},\cdots,x_{2n}^{\pm1}]$ and  $\Delta={\rm Span}_\C\{x_1\frac{\partial}{\partial x_1},\cdots, x_{2n}\frac{\partial}{\partial x_{2n}}\}$, then $\huaH_n(A,\Delta)$ is the Lie algebra  of Hamiltonian vector fields on a $2n$-dimensional torus.

 Note that $\huaH_n(A,\Delta)$ is not a Lie-Rinehart algebra  since $\huaH_n(A,\Delta)$ is not an $A$-module.  It is straightforward to  see that $(A,\huaH_n(A,\Delta),[\cdot,\cdot]_{A\Delta},\Id)$ is a Leibniz pair.

The restriction $H|_{\huaH_n(A,\Delta)}$ of the crossed homomorphism $H$ in Lemma \ref{CH-GW} is  a crossed homomorphism from $\huaH_n(A,\Delta)$ to $\mathfrak{sp}_{2n}\otimes A$. Similar to  Corollary \ref{cor-5.12}, by Lemma \ref{CH-GW}  and Corollary \ref{Shen-Larsson-functors'}, we obtain the following result.

\begin{cor}\label{cor:gh}
We have  a functor $\huaF_{{H}}^{A}:\Rep_{\C}(\mathfrak{sp}_{2n})\to \ARep_{\C}(\huaH_n(A,\Delta))$  defined by
$$
\huaF_{H}^A(V;\theta)=(V\ot_\C{A}; ({\alpha}\boxplus \theta)\circ\iota_{H}),\quad\forall(V;\theta)\in \Rep_{\C}(\mathfrak{sp}_{2n}).
$$
\end{cor}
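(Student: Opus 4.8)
The plan is to obtain the functor by feeding the right data into Corollary~\ref{Shen-Larsson-functors'}: the Leibniz pair $(A,\huaH_n(A,\Delta),[\cdot,\cdot]_{A\Delta},\alpha)$, the Lie algebra $\h=\mathfrak{sp}_{2n}$, and the crossed homomorphism $H|_{\huaH_n(A,\Delta)}$. Once I know that $H|_{\huaH_n(A,\Delta)}$ is genuinely a crossed homomorphism from $\huaH_n(A,\Delta)$ \emph{into $\mathfrak{sp}_{2n}\ot_\C A$} with respect to the restricted action, Corollary~\ref{Shen-Larsson-functors'} produces $\huaF_H^A$ verbatim, the anchor $\alpha|_{\huaH_n(A,\Delta)}$ playing the role of $\beta$. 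So the entire content of the proof is the assertion made just before the statement, and I would establish it in two steps.

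First, the restriction. Lemma~\ref{CH-GW}, applied to the $2n$ commuting derivations underlying $\huaW_{2n}(A,\Delta)$, says that $H\colon\huaW_{2n}(A,\Delta)\to\gl_{2n}\ot_\C A$ is a crossed homomorphism with respect to the action \eqref{eq:actional}, which touches only the $A$-factor. Since the defining identity \eqref{crossed-homo} involves only $x$, $y$ and their bracket $[x,y]$, and since $\huaH_n(A,\Delta)$ is a Lie subalgebra of $\huaW_{2n}(A,\Delta)$ on which the action restricts, \eqref{crossed-homo} persists for $x,y\in\huaH_n(A,\Delta)$. Hence $H|_{\huaH_n(A,\Delta)}$ is automatically a crossed homomorphism into $\gl_{2n}\ot_\C A$.

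Second, and this is the main point, I would check that the image in fact lands in $\mathfrak{sp}_{2n}\ot_\C A$. Writing $D(a)=\sum_{k=1}^{2n}b_k\partial_k$ with $b_i=-\partial_{n+i}(a)$ and $b_{n+i}=\partial_i(a)$ for $1\le i\le n$, Lemma~\ref{CH-GW} gives $H(D(a))=\sum_{k,j=1}^{2n}E_{kj}\ot\partial_k(b_j)$. Splitting the coefficient matrix $M(a)=(\partial_k(b_j))$ into $n\times n$ blocks $\left(\begin{smallmatrix}P&Q\\ R&S\end{smallmatrix}\right)$, one finds $P_{il}=-\partial_i\partial_{n+l}(a)$, $Q_{il}=\partial_i\partial_l(a)$, $R_{il}=-\partial_{n+i}\partial_{n+l}(a)$, and $S_{il}=\partial_{n+i}\partial_l(a)$. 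Because the derivations in $\Delta$ commute, $Q$ and $R$ are symmetric and $S=-P^T$, which are exactly the block relations defining $\mathfrak{sp}_{2n}$ for the standard symplectic form; this is the direct analogue of the explicit matrix displayed in Corollary~\ref{cor-5.12}. Thus $M(a)\in\mathfrak{sp}_{2n}$ for every $a\in A$, so $H|_{\huaH_n(A,\Delta)}$ maps into $\mathfrak{sp}_{2n}\ot_\C A$.

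With these two facts, Corollary~\ref{Shen-Larsson-functors'} applies directly and yields $\huaF_H^A$ with the stated formula $(V;\theta)\mapsto(V\ot_\C A;(\alpha\boxplus\theta)\circ\iota_H)$. I expect the only genuine obstacle to be the symplectic verification: one must keep the index bookkeeping between the two blocks of $D(a)$ straight and invoke commutativity of $\Delta$ at the right places. Everything else is a formal invocation of the earlier bifunctor result; a minor point worth noting is that the restricted action \eqref{eq:actional} preserves $\mathfrak{sp}_{2n}\ot_\C A$, which is immediate since it acts only on the $A$-factor, and that $\mathfrak{sp}_{2n}\ot_\C A$ is a Lie $A$-subalgebra of $\gl_{2n}\ot_\C A$.
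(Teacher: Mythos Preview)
Your proposal is correct and follows exactly the route the paper indicates: invoke Lemma~\ref{CH-GW}, restrict $H$ to the Lie subalgebra $\huaH_n(A,\Delta)$, verify the image lies in $\mathfrak{sp}_{2n}\ot_\C A$, and then apply Corollary~\ref{Shen-Larsson-functors'}. The paper leaves the symplectic verification implicit by analogy with the explicit matrix displayed before Corollary~\ref{cor-5.12}, whereas you spell out the block computation and correctly use commutativity of $\Delta$ to obtain $Q=Q^T$, $R=R^T$, $S=-P^T$; this is the same argument, only more detailed.
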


\emptycomment{

\section{Appendix:  left module categories over monoidal categories}

\begin{defi}A monoidal category is a $6$-tuple $(\huaC,\otimes, a,\mathbf{1},l,r)$ consists of the following data:
\begin{itemize}
\item A category $\huaC$.
\item A bifunctor $\otimes:\huaC\times \huaC\to \huaC$ called the monoidal product.
\item A natural isomorphism $a:\otimes\circ(\otimes\times \Id_\huaC)\lon\otimes\circ(\Id_\huaC\times \otimes)$ called the associativity isomorphism.
\item An object $\mathbf{1}\in \ob(\huaC)$ called the unit object.
\item A natural isomorphism $l:\otimes\circ(\mathbf{1}\times\Id_\huaC)\lon\Id_\huaC$ called the left unit isomorphism and a natural isomorphism $r:\otimes\circ(\Id_\huaC\times\mathbf{1})\lon\Id_\huaC$ called the right unit isomorphism.
\end{itemize}
These data satisfy the following two axioms:

$(1)$ the {\bf pentagon axiom}: the pentagon diagram
$$
\xymatrix{
&&((W\otimes X)\otimes Y)\otimes Z\ar[lld]_{a_{W\otimes X,Y,Z}}\ar[rrd]^{a_{W,X,Y}\otimes{\Id}_Z} &&\\
(W\otimes X)\otimes (Y\otimes Z)\ar[d]_{a_{W,X,Y\otimes Z}}&&&&(W\otimes (X\otimes Y))\otimes Z\ar[d]^{a_{W,X\otimes Y,Z}}\\
W\otimes( X\otimes (Y\otimes Z))&&&&W\otimes ((X\otimes Y)\otimes Z)\ar[llll]_{{\Id}_W\otimes a_{X,Y,Z}}\\
}
$$
commutes for the all $W,X,Y,Z\in\ob(\huaC)$.

$(2)$ the   {\bf triangle axiom}: the triangle diagram
$$
\xymatrix{
    (X\otimes\mathbf{1}) \otimes Y\ar[rr]^{a_{X,\mathbf{1},Y}}\ar@/_/[dr]_{r_X\otimes {\Id}_Y} & & X\otimes(\mathbf{1} \otimes Y ) \ar@/^/[dl]^{{{\Id}_X}\otimes l_Y} \\
     & X\otimes Y &
    }
$$
commutes for the all $X,Y\in\ob(\huaC)$.

The monoidal category $\huaC$ is  {\bf strict} if the associativity isomorphism, left unit isomorphism and right unit isomorphism $a,l,r$ are all identities.
\end{defi}

\begin{ex}{\rm Let $\mathcal{C}$ be category. Let us denote by $\mathcal{E}nd(\mathcal{C})$ the category of endofunctors (the functors from $\mathcal{C}$ into itself). $\mathcal{E}nd(\mathcal{C})$ is
a strict monoidal category with the composition of functors as the monoidal product and the identity functor as the unit object of this category.
}
\end{ex}

\begin{ex}{\rm If $\g$ is a Lie algebra over $\K$, then the category of its representations
$\Rep_{\K}(\g)$ is a monoidal category: the monoidal product of $(V_1;\theta_1)$ and $(V_2;\theta_2)$ is defined
by
$$
(V_1;\theta_1)\otimes (V_2;\theta_2):=(V_1\otimes V_2;\theta_1\otimes {\Id}_{V_2}+{\Id}_{V_1}\otimes \theta_2).
$$
and the unit object $\mathbf{1}$ is the 1-dimensional trivial representation $(\K;0)$ of $\g$. Moreover, the associativity isomorphism $$
a_{(V_1;\theta_1),(V_2;\theta_2),(V_3;\theta_3)}:((V_1;\theta_1)\otimes (V_2;\theta_2))\otimes (V_3;\theta_3)\lon (V_1;\theta_1)\otimes ((V_2;\theta_2)\otimes (V_3;\theta_3))
$$
is defined by
\begin{eqnarray}
a_{(V_1;\theta_1),(V_2;\theta_2),(V_3;\theta_3)}\Big((v_1\otimes v_2)\otimes v_3\Big):=v_1\otimes (v_2\otimes v_3),\,\,\,\,\forall v_i\in V_i,~i=1,2,3,
\end{eqnarray}
the left unit isomorphism $l_{(V;\theta)}$ and the right unit isomorphism $r_{(V;\theta)}$ are defined by
\begin{eqnarray}
l_{(V;\theta)}(k\otimes v):=kv,\,\,\,\,r_{(V;\theta)}(v\otimes k):=kv,\,\,\,\,\forall k\in\K,~v\in V.
\end{eqnarray}
}
\end{ex}

\begin{defi}
Let $(\huaC,\otimes, a,\mathbf{1},l,r)$ and $(\huaC',\otimes',a', \mathbf{1}',l',r')$ be two monoidal
categories. A {\bf monoidal functor} from $\huaC$ to $\huaC'$ is a triple $(F,\varphi_0,\varphi_2)$ where
$F : \huaC\to  \huaC'$ is a functor, $\varphi_0$ is an isomorphism from $\mathbf{1}'$ to $F(\mathbf{1})$, and $\varphi_2$ is a natural isomorphism
$$F(\cdot)\otimes'F(\cdot)\to F(\cdot\otimes\cdot)$$ such that for all $X,Y,Z\in\ob(\huaC)$, the following three diagrams commute:
$$
 \xymatrix{
(F(X)\otimes'F(Y))\otimes' F(Z)\ar[rr]^{a'_{F(X),F(Y),F(Z)}}\ar[d]_{\varphi_2(X,Y)\otimes' {\Id}_{F(Z)}}& &F(X)\otimes'(F(Y)\otimes' F(Z))\ar[d]^{{\Id}_{F(X)}\otimes' \varphi_2(Y,Z)} \\
F(X\otimes Y)\otimes'F(Z)\ar[d]_{\varphi_2(X\otimes Y,Z)}&  &F(X)\otimes' F(Y\otimes Z)\ar[d]^{\varphi_2(X,Y\otimes Z)}\\
F((X\otimes Y)\otimes Z)\ar[rr]^{F(a_{X,Y,Z})}&& F(X\otimes (Y\otimes Z))
,}
$$
$$
\xymatrix{
\mathbf{1}'\otimes' F(X) \ar[d]_{ \varphi_0\otimes' {\Id}_{F(X)}}\ar[rr]^{l'_{F(X)}}
                && F(X)   \\
F(\mathbf{1})\otimes' F(X) \ar[rr]^{\varphi_2(\mathbf{1},X)}
                && F(\mathbf{1}\otimes X)\ar[u]_{F(l_X)} ,}\,\,\,\,
\xymatrix{
F(X)\otimes' \mathbf{1}' \ar[d]_{ {\Id}_{F(X)}\otimes' \varphi_0}\ar[rr]^{r'_{F(X)}}
                && F(X)   \\
F(X)\otimes' F(\mathbf{1})\ar[rr]^{\varphi_2(X,\mathbf{1})}
                && F( X\otimes\mathbf{1})\ar[u]_{F(r_X)} .}
$$
\end{defi}

\begin{defi}
Let $(\huaC,\otimes, a,\mathbf{1},l,r)$ and $(\huaC',\otimes',a', \mathbf{1}',l',r')$ be two monoidal
categories, and let $(F,\varphi_0,\varphi_2)$ and $(F',\varphi'_0,\varphi'_2)$ be two monoidal functors from $\huaC$ to
$\huaC'$. A {\bf monoidal natural transformation} $\eta:(F,\varphi_0,\varphi_2)\to (F',\varphi'_0,\varphi'_2)$  is a natural transformation $\eta: F\to F'$ such that for all $X,Y\in\ob(\huaC)$, the following two diagrams commute:
$$
\xymatrix{
    \mathbf{1}' \ar[rr]^{\varphi_0}\ar@/_/[dr]_{\varphi'_0} & & F(\mathbf{1}) \ar@/^/[dl]^{\eta_{\mathbf{1}}} \\
     & F'(\mathbf{1}) &
    }\,\,\,\,
\xymatrix{
F(X)\otimes' F(Y)\ar[d]_{\eta_X\otimes' \eta_Y}\ar[rr]^{\varphi_2}         &&F(X\otimes Y)\ar[d]^{\eta_{X\otimes Y}}\\
F'(X)\otimes' F'(Y)\ar[rr]^{\varphi'_2}                                               &&F'(X\otimes Y).}
$$

A {\bf monoidal natural isomorphism} is a monoidal natural transformation which is a natural isomorphism. Two monoidal functors are monoidal isomorphic if there is a monoidal natural isomorphism between them.
\end{defi}

\begin{defi}\label{left-module-cate}
Let $(\huaC,\otimes, a,\mathbf{1},l,r)$ be a monoidal category. A {\bf left module category} over $\huaC$ is a category $\M$ equipped with a bifunctor $\otimes^{\M}:\huaC\times \M\lon\M$, a natural isomorphism $a^{\M}:\otimes^{\M}\circ(\otimes\times {\Id}_\M)\lon\otimes^{\M}\circ({\Id}_\huaC\times \otimes^{\M})$, and a natural isomorphism $l^\M:\otimes^\M\circ(\mathbf{1}\times\Id_\M)\lon\Id_\M$ such that
the pentagon diagram
$$
\xymatrix{
&&((X\otimes Y)\otimes Z)\otimes^\M M\ar[lld]_{a^\M_{X\otimes Y,Z,M}}\ar[rrd]^{a_{X,Y,Z}\otimes^\M{\Id}_M} &&\\
(X\otimes Y)\otimes^\M(Z\otimes^\M M)\ar[d]_{a^\M_{X,Y,Z\otimes^\M M}}&&&&(X\otimes (Y\otimes Z))\otimes^\M M\ar[d]^{a^\M_{X,Y\otimes Z,M}}\\
X\otimes^\M( Y\otimes^\M (Z\otimes^\M M))&&&&X\otimes^\M ((Y\otimes Z)\otimes^\M M)\ar[llll]_{{\Id}_X\otimes^\M a^\M_{Y,Z,M}}\\
}
$$
and the triangle diagram
$$
\xymatrix{
    (X\otimes\mathbf{1}) \otimes^\M M\ar[rr]^{a^\M_{X,\mathbf{1},M}}\ar@/_/[dr]_{r_X\otimes^\M {\Id}_M} & & X\otimes^\M(\mathbf{1} \otimes^\M M ) \ar@/^/[dl]^{{{\Id}_X}\otimes^\M l^\M_M} \\
     & X\otimes^\M M &
    }
$$
commutes for the all $X,Y,Z\in\ob(\huaC),M\in\ob(\M)$.
\end{defi}

\begin{ex}
Any monoidal category $(\huaC,\otimes, a,\mathbf{1},l,r)$ is a left module category over itself. More precisely, we set $\otimes^\huaC=\otimes,a^\huaC=a,l^\huaC=l$. This left module category can be considered as a categorification of the regular representation of an associative algebra.
\end{ex}

\begin{pro}\label{left-module-functor}
Structures of a left module category $\M$ over a monoidal category $(\huaC,\otimes, a,\mathbf{1},l,r)$ are in a natural one-to-one correspondence with monoidal functors $(F,\varphi_0,\varphi_2):\huaC\lon\mathcal{E}nd(\M)$.
\end{pro}

\begin{defi}
Let $\M_1$ and $\M_2$ be two left module categories over a monoidal category $(\huaC,\otimes, a,\mathbf{1},l,r)$. A {\bf left module functor} from $\M_1$ to $\M_2$ is a pair $(F,s)$ where $F:\M_1\lon\M_2$ is a functor, and $s:F(\cdot\otimes^{\M_1}\cdot)\to (\cdot)\otimes^{\M_2}F(\cdot)$ is a natural isomorphism such that the following two diagrams
$$
\xymatrix{
F((X\otimes Y)\otimes^{\M_1}M)\ar[dd]_{F(a^{\M_1}_{X,Y,M})}  \ar[rr]^{s_{X\otimes Y,M}}&&(X\otimes Y)\otimes^{\M_2}F(M)\ar[d]^{a^{\M_2}_{X,Y,F(M)}}\\
&&
X\otimes^{\M_2} (Y\otimes^{\M_2}F(M))\\
F(X\otimes^{\M_1} (Y\otimes^{\M_1}M))\ar[rr]^{s_{X,Y\otimes^{\M_1}M}}&&X\otimes^{\M_2} F(Y\otimes^{\M_1}M)\ar[u]_{{\Id}_X\otimes^{\M_2} s_{Y,M}}\ }
$$
and
$$
\xymatrix{
    F(\mathbf{1}\otimes^{\M_1} M)\ar[rr]^{s_{\mathbf{1},M}}\ar@/_/[dr]_{F(l^{\M_1}_M)} &&\mathbf{1}\otimes^{\M_2} F(M)\ar@/^/[dl]^{l^{\M_2}_{F(M)}} \\
     & F(M)&
    }
$$
commute for the all $X,Y\in\ob(\huaC),M\in\ob(\M)$.

A {\bf left module equivalence} from $\M_1$ to $\M_2$ is a left module functor $(F,s)$ from $\M_1$ to $\M_2$ such that $F$ is a equivalence of categories.
\end{defi}}

\section{Deformation and cohomologies  of crossed homomorphisms}\label{sec:deformation}

In this section, first we give the Maurer-Cartan characterization of crossed homomorphisms of Lie algebras. In particular, we give the differential graded Lie algebra that control deformations of crossed homomorphisms.  Then we define the cohomology groups of crossed homomorphisms, which can be applied   to study linear deformations of crossed homomorphisms.

\subsection{The differential graded Lie algebra controlling deformations}

\begin{defi}\label{1.1} A {\bf differential graded Lie algebra}
$(\g,[\cdot,\cdot],d)$ is   a
$\Z$-graded vector space
$\g=\oplus_{i\in \Z}\g_i$ together with a bilinear bracket
$[\cdot,\cdot]\colon \g\otimes \g\to \g$ and a linear map $d\colon \g\to \g$ satisfying
the following conditions:\begin{itemize}

\item  $[\g_i,\g_j]\subset\g_{i+j}$ and
$[a,b]=-(-1)^{\overline{a}\overline{b}}[b,a]$ for every $a,b$ homogeneous.

\item  Every $a,b,c$ homogeneous satisfy the Jacobi identity
\[ [a,[b,c]]=[[a,b],c]+(-1)^{\overline{a}\overline{b}}[b,[a,c]].\]

\item  $d(\g_i)\subset \g_{i+1}$, $d\circ d=0$ and
$d[a,b]=[da,b]+(-1)^{\overline{a}}[a,db]$. The map $d$ is called the
{\bf differential} of $\g$.
\end{itemize}
We have used the notation $\bar a=i$ if $a\in \g_i$.
\end{defi}

\begin{defi}\label{MCE}{\rm (\cite{LV})}
  Let $(\g=\oplus_{k\in\mathbb Z}\g_k,[\cdot,\cdot],d)$ be a differential graded Lie algebra.  A degree $1$ element $\theta\in\g_1$ is called a {\bf Maurer-Cartan element} of $\g$ if it
  satisfies the following {\bf Maurer-Cartan equation:}
\vspace{-.3cm}
  \begin{equation}
  d \theta+\half[\theta,\theta]=0.
  \label{eq:mce}
  \end{equation}
  \end{defi}

\begin{pro}\label{deformation-MCE}{\rm (\cite{LV})}
Let $(\g=\oplus_{k\in\mathbb Z}\g_k,[\cdot,\cdot],d)$ be a differential graded Lie algebra and let $\mu\in \g_1$ be a Maurer-Cartan element. Then the map
$$ d_\mu: \g \longrightarrow \g,\quad \ d_\mu(x):=d(x)+[\mu, x], \quad \forall x\in \g,$$
is a differential on the graded Lie algebra $(\g,[\cdot,\cdot])$. For any $v\in \g_1$, the sum $\mu+v$ is a
Maurer-Cartan element of the differential graded Lie algebra $(\g,
[\cdot,\cdot],d)$ if and only if $v$ is a Maurer-Cartan element of the differential graded Lie algebra $(\g, [\cdot,\cdot], d_\mu)$.
\end{pro}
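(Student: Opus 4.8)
The plan is to establish the two assertions separately: first that $d_\mu = d + [\mu,-]$ squares to zero and is a graded derivation of the bracket, so that $(\g,[\cdot,\cdot],d_\mu)$ is again a differential graded Lie algebra; and then that the Maurer-Cartan equation for $\mu+v$ relative to $d$ disassembles into the Maurer-Cartan equation for $\mu$ together with the Maurer-Cartan equation for $v$ relative to $d_\mu$. Throughout, the only inputs are graded antisymmetry $[a,b]=-(-1)^{\overline{a}\,\overline{b}}[b,a]$, the graded Jacobi identity, the derivation rule $d[a,b]=[da,b]+(-1)^{\overline a}[a,db]$, and the hypothesis $d\mu+\half[\mu,\mu]=0$, remembering $\overline\mu=1$.

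For the first assertion I would write $d_\mu=d+\ad_\mu$ with $\ad_\mu:=[\mu,-]$; since $\mu\in\g_1$, both summands raise degree by one. The operator $\ad_\mu$ is a degree-one derivation: specializing the graded Jacobi identity to $a=\mu$ gives exactly $[\mu,[b,c]]=[[\mu,b],c]+(-1)^{\overline b}[b,[\mu,c]]$, which is the Leibniz rule for a degree-one map. As $d$ is also a degree-one derivation, so is their sum $d_\mu$. It then remains to check $d_\mu\circ d_\mu=0$. Expanding $d_\mu^2(x)$ and using $d^2=0$ together with $d[\mu,x]=[d\mu,x]-[\mu,dx]$ collapses the cross terms to $[d\mu,x]$; meanwhile the graded Jacobi identity applied at $a=b=\mu$ yields $[\mu,[\mu,x]]=\half[[\mu,\mu],x]$. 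Adding these gives $d_\mu^2(x)=[\,d\mu+\half[\mu,\mu],\,x\,]$, which vanishes by the Maurer-Cartan equation for $\mu$.

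For the second assertion I would simply expand $d(\mu+v)+\half[\mu+v,\mu+v]=0$. Since $\mu,v\in\g_1$, graded antisymmetry gives $[v,\mu]=[\mu,v]$, so the bracket expands as $[\mu,\mu]+2[\mu,v]+[v,v]$, and regrouping the terms produces
\[
\Big(d\mu+\half[\mu,\mu]\Big)+\Big(dv+[\mu,v]+\half[v,v]\Big)=0.
\]
The first parenthesis vanishes because $\mu$ is Maurer-Cartan, so the whole equation is equivalent to $dv+[\mu,v]+\half[v,v]=0$, that is $d_\mu v+\half[v,v]=0$, which is precisely the Maurer-Cartan equation for $v$ in $(\g,[\cdot,\cdot],d_\mu)$.

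The only genuine obstacle is disciplined sign bookkeeping: every step hinges on $\overline\mu=1$, which turns the various factors $(-1)^{\overline\mu}$ and $(-1)^{\overline\mu\,\overline v}$ into $-1$ and makes the cross terms combine cleanly. No convergence or structural difficulty arises, since the statement is purely formal once these signs are tracked.
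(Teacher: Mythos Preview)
Your argument is correct. Note, however, that the paper does not supply its own proof of this proposition: it is stated as a known result with a citation to \cite{LV} (Loday--Vallette, \emph{Algebraic Operads}), so there is no paper proof to compare against. Your write-up is the standard verification and would serve perfectly well as a self-contained proof; the sign bookkeeping you flag (using $\overline\mu=1$ to get $d[\mu,x]=[d\mu,x]-[\mu,dx]$, $[\mu,[\mu,x]]=\half[[\mu,\mu],x]$, and $[v,\mu]=[\mu,v]$) is handled correctly.
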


Let $(\g,[\cdot,\cdot]_\g)$ and $(\h,[\cdot,\cdot]_\h)$ be Lie algebras and  $\rho:\g\lon\Der(\h)$ an action of $\g$ on $\h$. Consider the graded vector space
$$\huaC^*(\g,\h):=\oplus_{k\geq 0}\Hom(\wedge^{k}\g,\h).$$
Define $d:\Hom(\wedge^m\g,\h)\lon\Hom(\wedge^{m+1}\g,\h)$ by
\begin{eqnarray}
\label{graded-der}(df) ( x_1,\cdots, x_{m+1} )
&=&\sum_{i=1}^{m+1}(-1)^{m+i}\rho(x_i)f(x_1,\cdots,\hat{x}_i,\cdots,x_{m+1})\\
\nonumber&&+\sum_{1\le i< j\le m+1}(-1)^{m+i+j-1}f([x_i,x_j]_{\g},x_1,\cdots,\hat{x}_i,\cdots,\hat{x}_{j},\cdots,x_{m+1}),
\end{eqnarray}
 for all $f\in \Hom(\wedge^m\g,\h)$. Define a skew-symmetric bracket operation $\Courant{\cdot,\cdot}: \Hom(\wedge^m\g,\h)\times \Hom(\wedge^n\g,\h)\longrightarrow \Hom(\wedge^{m+n}\g,\h)$ by
\begin{eqnarray}
&&\nonumber\Courant{f_1,f_2}(x_1,x_2,\cdots,x_{m+n})\\
\label{graded-Lie}&=&(-1)^{mn+1}\sum_{\sigma\in \mathbb S_{(m,n)}}(-1)^{\sigma}[f_1(x_{\sigma(1)},\cdots,x_{\sigma(m)}),f_2(x_{\sigma(m+1)},\cdots,x_{\sigma(m+n)})]_\h
\vspace{-.1cm}
\end{eqnarray}
for all $f_1\in\Hom(\wedge^m\g,\h)$ and $f_2\in\Hom(\wedge^n\g,\h)$. Here $\mathbb S_{(m,n)}$   denotes   the set of all $(m,n)$-shuffles.

Note that for all $u,v\in\h$, $\Courant{u,v}=-[u,v]_\h$.

\begin{pro}\label{crossed-homo-MC}
  With the above notations, $(\huaC^*(\g,\h),\Courant{\cdot,\cdot},d)$ is a differential graded Lie algebra. Its Maurer-Cartan elements are precisely   crossed homomorphisms from $\g$ to $\h$ with respect to the action $\rho$.
\end{pro}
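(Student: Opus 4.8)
The plan is to check the four axioms of Definition \ref{1.1} for the triple $(\huaC^*(\g,\h),\Courant{\cdot,\cdot},d)$ and then to unwind the Maurer-Cartan equation \eqref{eq:mce} in the lowest degree. What organizes all the sign bookkeeping is the observation that, under the canonical identification
\[
\huaC^*(\g,\h)=\bigoplus_{k\ge0}\Hom(\wedge^k\g,\h)\cong\big(\wedge^\bullet\g^*\big)\ot_\K\h,
\]
the underlying graded vector space is the tensor product of the graded-commutative algebra $\wedge^\bullet\g^*$ with the Lie algebra $(\h,[\cdot,\cdot]_\h)$; the bracket $\Courant{\cdot,\cdot}$ of \eqref{graded-Lie} is, up to the degree-shift sign $(-1)^{mn+1}$ built into it, the bracket $(\alpha\ot u,\beta\ot v)\mapsto(\alpha\wedge\beta)\ot[u,v]_\h$ induced by these two structures; and $d$ of \eqref{graded-der} is the Chevalley-Eilenberg differential of $\g$ with coefficients in the $\g$-module $(\h,\rho)$, normalized by the degree-dependent sign $(-1)^m$ so as to become a degree $+1$ graded derivation. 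With this dictionary an element of $\Hom(\wedge^m\g,\h)$ has degree $m$, so a crossed homomorphism $H\in\Hom(\g,\h)$ sits in degree $1$, exactly where a Maurer-Cartan element should live.

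The verification I would carry out then splits cleanly. Graded skew-symmetry $\Courant{f_1,f_2}=-(-1)^{mn}\Courant{f_2,f_1}$ follows by combining the bijection $\mathbb{S}_{(m,n)}\cong\mathbb{S}_{(n,m)}$ between shuffles with the skew-symmetry of $[\cdot,\cdot]_\h$, the leftover sign being absorbed by the prefactor $(-1)^{mn+1}$; as a sanity check, in degree $0$ this recovers $\Courant{u,v}=-[u,v]_\h$. The graded Jacobi identity involves only $\Courant{\cdot,\cdot}$ and not $\rho$ at all, and after collecting shuffle contributions it reduces to the Jacobi identity of $[\cdot,\cdot]_\h$, precisely as for the bracket on $\big(\wedge^\bullet\g^*\big)\ot\h$. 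The relation $d\circ d=0$ uses only that $\rho:\g\lon\Der(\h)\subset\gl(\h)$ is a Lie algebra homomorphism, i.e. that $\h$ is a genuine $\g$-module, which is the classical fact that the Chevalley-Eilenberg operator squares to zero. Finally, the graded Leibniz rule $d\Courant{f_1,f_2}=\Courant{df_1,f_2}+(-1)^{\bar{f_1}}\Courant{f_1,df_2}$ is the one place where the two structures interact: on expanding both sides, the terms carrying a bracket $[x_i,x_j]_\g$ match by the combinatorics of shuffles, while the terms in which a $\rho(x_i)$ hits the value $[f_1(\cdots),f_2(\cdots)]_\h$ match exactly because each $\rho(x_i)$ is a derivation of $[\cdot,\cdot]_\h$, guaranteed by $\rho(x_i)\in\Der(\h)$.

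For the Maurer-Cartan characterization I would simply evaluate in degree $1$. Taking $H\in\Hom(\wedge^1\g,\h)$ and setting $m=1$ in \eqref{graded-der} gives $(dH)(x,y)=\rho(x)(Hy)-\rho(y)(Hx)-H([x,y]_\g)$, while setting $m=n=1$ in \eqref{graded-Lie} the two $(1,1)$-shuffles produce $\half\Courant{H,H}(x,y)=[Hx,Hy]_\h$. Hence \eqref{eq:mce} becomes $\rho(x)(Hy)-\rho(y)(Hx)-H([x,y]_\g)+[Hx,Hy]_\h=0$, which is exactly \eqref{crossed-homo}; so the Maurer-Cartan elements are precisely the crossed homomorphisms with respect to $\rho$. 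I expect the main obstacle to be the graded Leibniz rule for $d$: it is the only axiom genuinely mixing the wedge/$\h$-bracket with the $\rho$-coefficients, so it cannot be read off formally from the tensor-product picture and really needs the hypothesis $\rho(x)\in\Der(\h)$; once that and the degree-shift normalization of $d$ are pinned down, skew-symmetry, Jacobi and $d^2=0$ are routine bookkeeping around standard facts.
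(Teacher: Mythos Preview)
Your proposal is correct, and the Maurer-Cartan computation in degree $1$ is exactly right. However, your route is genuinely different from the paper's.

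The paper does not verify the DGLA axioms directly. Instead it embeds everything into the Nijenhuis--Richardson graded Lie algebra $\big(\oplus_{k\ge0}\Hom(\wedge^k(\g\oplus\h),\g\oplus\h),[\cdot,\cdot]_{\NR}\big)$, observes that $\mu_\g+\rho$ and $\mu_\h$ are Maurer--Cartan elements there, and then defines $\Courant{\cdot,\cdot}$ as a \emph{derived bracket} $\Courant{f_1,f_2}=(-1)^{m-1}[[\mu_\h,f_1]_{\NR},f_2]_{\NR}$ and $d$ as $[\mu_\g+\rho,\cdot]_{\NR}$. The graded Jacobi identity and $d^2=0$ then come for free from general derived-bracket theory (citing Kosmann-Schwarzbach and Voronov), and the Leibniz rule is deduced from the single identity $[\mu_\g+\rho,\mu_\h]_{\NR}=0$, which encodes $\Img\rho\subset\Der(\h)$.

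Your approach is more elementary and self-contained: you recognize $\huaC^*(\g,\h)\cong(\wedge^\bullet\g^*)\ot_\K\h$ as the tensor product of a graded-commutative algebra with a Lie algebra, so graded skew-symmetry and Jacobi are immediate, $d$ is the Chevalley--Eilenberg differential so $d^2=0$ is classical, and you correctly isolate the Leibniz rule as the one place where $\rho(x)\in\Der(\h)$ is actually needed. What the paper's approach buys is that no shuffle combinatorics need be written out and the argument generalizes verbatim to other operadic contexts; what your approach buys is that it requires no external machinery (no Nijenhuis--Richardson bracket, no derived-bracket formalism) and makes transparent exactly which hypothesis drives each axiom.
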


\begin{proof}
In short, the graded Lie algebra $(\huaC^*(\g,\h),\Courant{\cdot,\cdot})$ is obtained via the derived bracket \cite{Kosmann-Schwarzbach,Vor}. In fact, the Nijenhuis-Richardson bracket $[\cdot,\cdot]_{\NR}$ associated to the direct sum vector space $\g\oplus V$ gives rise to a graded Lie algebra $(\oplus_{k\geq 0}\Hom(\wedge^k(\g\oplus \h),\g\oplus \h),[\cdot,\cdot]_{\NR})$. Obviously $\oplus_{k\geq 0}\Hom(\wedge^k\g,\h)$ is an abelian subalgebra. We denote the Lie brackets $[\cdot,\cdot]_\g$ and $[\cdot,\cdot]_\h$ by $\mu_\g$ and $\mu_\h$ respectively. Since $\rho$ is an  action of the Lie algebra $(\g,[\cdot,\cdot]_\g)$. We deduce that $\mu_\g+\rho$ is a semidirect product Lie algebra structure on $\g\oplus\h$. Thus $\mu_\g+\rho$ and $\mu_\h$ are Maurer-Cartan elements of the graded Lie algebra $(\huaC^*(\g\oplus\h,\g\oplus\h),[\cdot,\cdot]_{\NR})$. Define a differential $d_{\mu_\h}$ on $(\huaC^*(\g\oplus\h,\g\oplus\h),[\cdot,\cdot]_{\NR})$ via
$$
  d_{\mu_\h}:=[\mu_\h,\cdot]_{\NR}.
$$
Further, we define the derived bracket on the graded vector space $\oplus_{k\geq 0}\Hom(\wedge^k\g,\h)$ by
\vspace{-.1cm}
$$
  \Courant{f_1,f_2}:=(-1)^{m-1}[[\mu_\h,f_1]_{\NR},f_2]_{\NR},\quad\forall f_1\in\Hom(\wedge^m\g,\h),~f_2\in\Hom(\wedge^n\g,\h),
\vspace{-.1cm}
$$
which is exactly the bracket given by \eqref{graded-Lie}. By $[\mu_\h,\mu_\h]_{\NR}=0,$ we deduce that $(\huaC^*(\g,\h),\Courant{\cdot,\cdot})$ is a graded Lie algebra.

Moreover, by $\Img\rho\subset\Der(\h)$, we have $[\mu_\g+\rho,\mu_\h]_{\NR}=0.$ We define a linear map $d=:[\mu_\g+\rho,\cdot]_{\NR}$ on the graded space $\huaC^*(\g\oplus\h,\g\oplus\h)$. We obtain that $d$ is closed on the subspace $\oplus_{k\geq 0}\Hom(\wedge^k\g,\h)$, and is given by \eqref{graded-der}.

By $[\mu_\g+\rho,\mu_\g+\rho]_{\NR}=0$, we obtain that $d\circ d=0.$ Moreover, by $[\mu_\g+\rho,\mu_\h]_{\NR}=0$, we deduce that $d$ is a derivation of $(\huaC^*(\g,\h),\Courant{\cdot,\cdot})$. Therefore,  $(\huaC^*(\g,\h),\Courant{\cdot,\cdot},d)$ is a differential graded Lie algebra.

Finally, for a degree one element ${H}\in \Hom(\g,\h)$, we have
\begin{eqnarray*}
\big(dH+\frac{1}{2}\Courant{{H},{H}}\big)(x,y)=\rho(x)({H} y)-\rho(y)({H} x)-{H}[x,y]_\g+[{H} x,{H} y]_\h.
\end{eqnarray*}
Thus, Maurer-Cartan elements are precisely crossed homomorphisms from  $(\g,[\cdot,\cdot]_\g)$ to $(\h,[\cdot,\cdot]_\h)$ with respect to the action $\rho$. The proof is finished.
\end{proof}

\emptycomment{
\begin{proof}
By Lemma \ref{twilled-DGLA-concrete}, we get the dgLa structure on $\huaC^*(\g,\h)=\oplus_{k\geq 0}\Hom(\wedge^{k}\g,\h)$ as above. For ${H}\in C^2(\g,\h)$, we have
\begin{eqnarray*}
\Big(d({H})+\frac{1}{2}[[{H},{H}]]\Big)(x_1,x_2)=\rho(x_1)({H} x_2)-\rho(x_2)({H} x_1)-{H}[x_1,x_2]_\g+[{H} x_1,{H} x_2]_\h.
\end{eqnarray*}
Thus, the Maurer-Cartan elements are precisely crossed homomorphism from Lie algebra $(\g,[\cdot,\cdot]_\g)$ to $(\h,[\cdot,\cdot]_\h)$. The proof is finished.
\end{proof}
}

Let ${H}:\g\longrightarrow\h$ be a crossed homomorphism with respect to the action $\rho$. Since ${H}$ is a Maurer-Cartan element of the differential graded Lie algebra $(\huaC^*(\g,\h),\Courant{\cdot,\cdot},d)$ by Proposition~\ref{crossed-homo-MC}, it follows from Proposition~\ref{deformation-MCE} that
$d_{{H}}:=d+\Courant{{H},\cdot}$
 is a graded derivation on the graded Lie
algebra $(\huaC^*(\g,\h),\Courant{\cdot,\cdot})$ satisfying $d^2_{{H}}=0$.
  Therefore, $(\huaC^*(\g,\h),\Courant{\cdot,\cdot},d_{{H}})$ is a differential graded Lie algebra.
This differential graded Lie algebra can control deformations of crossed homomorphisms.  We have obtained the following result.

\begin{thm}\label{thm:deformation}
Let ${H}:\g\longrightarrow\h$ be a crossed homomorphism with respect to the action $\rho$. For a
linear map ${H}':\g\longrightarrow\h$, then  ${H}+{H}'$ is still
a crossed homomorphism from $\g$ to $\h$ with respect to the action $\rho$ if and only if ${H}'$ is a Maurer-Cartan
element of the differential graded Lie algebra
$(\huaC^*(\g,\h),\Courant{\cdot,\cdot},d_{{H}})$.
\end{thm}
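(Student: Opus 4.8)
The plan is to deduce the statement directly from the two Maurer--Cartan principles already in hand, namely Proposition~\ref{crossed-homo-MC} and Proposition~\ref{deformation-MCE}, without ever reopening the explicit cochain formulas \eqref{graded-der} and \eqref{graded-Lie}. The point to keep in mind throughout is that both ${H}$ and ${H}'$ are linear maps $\g\to\h$, hence both lie in $\Hom(\g,\h)=\Hom(\wedge^1\g,\h)$, the degree-one component of the differential graded Lie algebra $(\huaC^*(\g,\h),\Courant{\cdot,\cdot},d)$, so the abstract deformation machinery applies verbatim.

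First I would record that, by Proposition~\ref{crossed-homo-MC}, a linear map $\g\to\h$ is a crossed homomorphism with respect to $\rho$ precisely when it is a Maurer--Cartan element of $(\huaC^*(\g,\h),\Courant{\cdot,\cdot},d)$; in particular the hypothesis that ${H}$ is a crossed homomorphism says exactly that ${H}$ satisfies the Maurer--Cartan equation $d{H}+\frac12\Courant{{H},{H}}=0$. Next I would invoke Proposition~\ref{deformation-MCE} with the Maurer--Cartan element $\mu={H}$: it guarantees that $d_{{H}}=d+\Courant{{H},\cdot}$ is a differential making $(\huaC^*(\g,\h),\Courant{\cdot,\cdot},d_{{H}})$ into a differential graded Lie algebra (as was already noted in the paragraph preceding the theorem), and it supplies the equivalence that, for $v={H}'$, the sum ${H}+{H}'$ is a Maurer--Cartan element of $(\huaC^*(\g,\h),\Courant{\cdot,\cdot},d)$ if and only if ${H}'$ is a Maurer--Cartan element of $(\huaC^*(\g,\h),\Courant{\cdot,\cdot},d_{{H}})$.

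To finish I would simply chain the two equivalences: ${H}+{H}'$ is a crossed homomorphism, by Proposition~\ref{crossed-homo-MC}, exactly when ${H}+{H}'$ is a Maurer--Cartan element of the untwisted differential graded Lie algebra, which by Proposition~\ref{deformation-MCE} happens exactly when ${H}'$ is a Maurer--Cartan element of the twisted differential graded Lie algebra $(\huaC^*(\g,\h),\Courant{\cdot,\cdot},d_{{H}})$. There is no genuine computation to perform; the only point requiring a moment's care is the degree bookkeeping, namely confirming that ${H}$ and ${H}'$ are both degree-one elements so that $\mu$ and $v$ in Proposition~\ref{deformation-MCE} are of the correct degree. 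This is the mildest of obstacles rather than a real difficulty, and once it is checked the proof is immediate.
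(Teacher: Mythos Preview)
Your proposal is correct and follows essentially the same route as the paper: the theorem is stated as an immediate consequence of Proposition~\ref{crossed-homo-MC} (identifying crossed homomorphisms with Maurer--Cartan elements) together with Proposition~\ref{deformation-MCE} (the twisting principle), with no further computation. The paper simply writes ``We have obtained the following result'' after recalling these two facts, which is exactly the chain of equivalences you describe.
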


\subsection{Cohomologies of crossed homomorphisms}\label{sec:deformation2}

In this subsection, we define cohomologies of a crossed homomorphism, which can be used to study linear deformations in Section \ref{sec:ld}.

Recall that $\rho_H$ defined by \eqref{eq:newrep} is a representation of $\g$ on $\h$.
Let $d_{\rho_{{H}}}: \Hom(\wedge^k\g,\h)\longrightarrow
\Hom(\wedge^{k+1}\g,\h)$ be the corresponding Chevalley-Eilenberg
coboundary operator. More precisely, for all $f\in
\Hom(\wedge^k\g,\h)$ and $x_1,\cdots,x_{k+1}\in \g$, we have
\begin{eqnarray}
&& d_{\rho_{{H}}} f(x_1,\cdots,x_{k+1})\notag\\
&=&\sum_{i=1}^{k+1}(-1)^{i+1}\rho(x_i)f(x_1,\cdots,\hat{x}_i,\cdots, x_{k+1})+\sum_{i=1}^{k+1}(-1)^{i+1}[{H} x_i,f(x_1,\cdots,\hat{x}_i,\cdots, x_{k+1})]_\h\\
&&+\sum_{1\le i<j\le k+1}(-1)^{i+j}f([x_i,x_j]_\g,x_1,\cdots,\hat{x}_i,\cdots,\hat{x}_j,\cdots, x_{k+1}). \notag
\end{eqnarray}
It is obvious that $u\in\h$ is closed if and only if
         $
           \rho(x)u+[{H} x,u]_\h=0
         $
         for all $x\in\g$,
               and   $f\in \Hom(\g,\h)$  is closed  if and only if
               $$
              \rho(x_1)f(x_2)-\rho(x_2)f(x_1)+[{H} x_1,f(x_2)]_\h-[{H} x_2,f(x_1)]_\h-f([x_1,x_2]_\g)=0,\quad\forall x_1,x_2\in\g.
               $$

\begin{defi}
Let ${H}:\g\longrightarrow\h$ be a crossed homomorphism with respect to the action $\rho$. Denote by $\huaC^k(\g,\h)=\Hom(\wedge^k\g,\h)$ and $(\huaC^*(\g,\h)=\oplus _{k\geq 0}\huaC^k(\g,\h),d_{\rho_{{H}}})$ the above cochain complex. Denote the set of $k$-cocycles by $\huaZ^k(\g,\h)$ and the set of $k$-coboundaries by $\huaB^k(\g,\h)$. Denote by
  \begin{equation}
  \huaH^k(\g,\h)=\huaZ^k(\g,\h)/\huaB^k(\g,\h), \quad k \geq 0,
  \label{eq:ocoh}
  \end{equation}
the $k$-th cohomology group which will be taken to be the {\bf $k$-th cohomology group for the crossed homomorphism $ H$}.
\label{de:opcoh}
\end{defi}

Comparing the coboundary operators $d_{\rho_{{H}}}$ given above and the
operators $d_{{H}}=d+\Courant{{H},\cdot}$ defined by the Maurer-Cartan element ${H}$, we have

\begin{pro}\label{pro:danddT}
 Let ${H}:\g\longrightarrow\h$ be a crossed homomorphism. Then we have
 $$
 d_{\rho_{{H}}}f=(-1)^{k-1}d_{{H}}f,\quad \forall f\in \Hom(\wedge^k\g,\h).
 $$
\end{pro}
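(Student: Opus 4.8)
The plan is to prove the identity by a direct term-by-term comparison of the two explicit coboundary formulas, since both $d_{\rho_{H}}$ and $d_{H}$ are already written out componentwise; the only real content is sign bookkeeping, so I would organize the computation to make the signs transparent. First I would observe that the Chevalley--Eilenberg operator $d_{\rho_{H}}$ groups, for each omitted index $i$, the term $\rho(x_i)f(\cdots)$ together with $[{H}x_i, f(\cdots)]_\h$ into a single $\rho_{H}(x_i)f(\cdots)$, using the very definition $\rho_{H}(x)u=\rho(x)u+[{H}x,u]_\h$ from \eqref{eq:newrep}. Thus $d_{\rho_{H}}f(x_1,\ldots,x_{k+1})$ equals $\sum_i (-1)^{i+1}\rho_{H}(x_i)f(x_1,\ldots,\hat{x}_i,\ldots,x_{k+1})$ plus the bracket term $\sum_{i<j}(-1)^{i+j} f([x_i,x_j]_\g,\ldots)$.

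Next I would expand $d_{H}f=df+\Courant{{H},f}$ using \eqref{graded-der} and \eqref{graded-Lie} with $f\in\Hom(\wedge^k\g,\h)$, so that $m=k$ in \eqref{graded-der}. The term $df$ supplies exactly the $\rho(x_i)$ contributions with sign $(-1)^{k+i}$ and the $f([x_i,x_j]_\g,\ldots)$ contributions with sign $(-1)^{k+i+j-1}$. The key step is to evaluate $\Courant{{H},f}$ for ${H}$ of arity $1$, i.e.\ with $m=1,\ n=k$ in \eqref{graded-Lie}. The relevant $(1,k)$-shuffles are indexed by the choice of which argument is pulled into the first slot: if $x_i$ is chosen, the remaining arguments stay in increasing order and the shuffle sign is $(-1)^{i-1}$. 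Combined with the prefactor $(-1)^{mn+1}=(-1)^{k+1}$, this yields $\Courant{{H},f}(x_1,\ldots,x_{k+1})=\sum_i (-1)^{k+i}[{H}x_i, f(x_1,\ldots,\hat{x}_i,\ldots,x_{k+1})]_\h$.

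Then I would add the two pieces: for each omitted index $i$, the $\rho(x_i)$ term from $df$ and the $[{H}x_i,\cdot]_\h$ term from $\Courant{{H},f}$ both carry the sign $(-1)^{k+i}$, so they combine into $(-1)^{k+i}\rho_{H}(x_i)f(\cdots)$, exactly paralleling the grouping in $d_{\rho_{H}}$. Hence $d_{H}f(x_1,\ldots,x_{k+1})=\sum_i (-1)^{k+i}\rho_{H}(x_i)f(\cdots)+\sum_{i<j}(-1)^{k+i+j-1} f([x_i,x_j]_\g,\ldots)$. Finally, factoring the global sign $(-1)^{k-1}$ converts $(-1)^{k+i}$ into $(-1)^{i+1}$ and $(-1)^{k+i+j-1}$ into $(-1)^{i+j}$, which are precisely the signs appearing in $d_{\rho_{H}}f$; this gives $d_{\rho_{H}}f=(-1)^{k-1}d_{H}f$ and completes the argument.

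The main obstacle, such as it is, lies entirely in the shuffle-sign computation for $\Courant{{H},f}$ and in keeping the three sign conventions consistent: the Chevalley--Eilenberg signs $(-1)^{i+1},(-1)^{i+j}$, the $d$-signs $(-1)^{k+i},(-1)^{k+i+j-1}$ from \eqref{graded-der}, and the shuffle prefactor $(-1)^{k+1}$ from \eqref{graded-Lie}. There is no conceptual difficulty; indeed one could alternatively note that Proposition~\ref{deformation-MCE} already guarantees $d_{H}^2=0$, so the statement merely pins down $d_{H}$, up to the scalar sign $(-1)^{k-1}$, as the Chevalley--Eilenberg differential of the action $\rho_{H}$ furnished by the preceding lemma.
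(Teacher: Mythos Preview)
Your proposal is correct and follows essentially the same approach as the paper: a direct term-by-term expansion of $d_{H}f=df+\Courant{{H},f}$ using \eqref{graded-der} and \eqref{graded-Lie}, followed by matching signs against the Chevalley--Eilenberg formula for $d_{\rho_{H}}$. The only cosmetic difference is the order of operations---the paper multiplies through by $(-1)^{k-1}$ at the outset and then identifies the result with $d_{\rho_{H}}f$, whereas you first group the $\rho(x_i)$ and $[Hx_i,\cdot]_{\h}$ contributions into $\rho_{H}(x_i)$ and factor out the global sign at the end---but the computation and its content are identical.
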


\begin{proof}
Indeed, for all $x_1,x_2,\cdots,x_{k+1}\in \g$ and $f\in \Hom(\wedge^k\g,\h)$ , we have
\begin{eqnarray*}
(-1)^{k-1}(d_{{H}}f)(x_1,x_2,\cdots,x_{k+1}) &=&(-1)^{k-1}(df+\Courant{{H},f})(x_1,\cdots,x_{k+1})\\
&=&\sum_{i=1}^{i+1}(-1)^{i+1}\rho(x_i)f(x_1,\cdots,\hat{x}_i,\cdots,x_{k+1})\\
&&+\sum_{1\le i< j\le k+1}(-1)^{i+j}f([x_i,x_j]_{\g},x_1,\cdots,\hat{x}_i,\cdots,\hat{x}_{j},\cdots,x_{k+1})\\
&&+(-1)^{k-1}(-1)^{k+1}\sum_{\sigma\in \mathbb S_{(1,k)}}(-1)^{\sigma}[{H} x_{\sigma(1)},f(x_{\sigma(2)},\cdots,x_{\sigma(k+1)})]_\h\\
&=&\sum_{i=1}^{i+1}(-1)^{i+1}\rho(x_i)f(x_1,\cdots,\hat{x}_i,\cdots,x_{k+1})\\&&+\sum_{1\le i< j\le k+1}(-1)^{i+j}f([x_i,x_j]_{\g},x_1,\cdots,\hat{x}_i,\cdots,\hat{x}_{j},\cdots,x_{k+1})\\
&&+\sum_{i=1}^{k+1}(-1)^{i-1}[{H} x_{i},f(x_{1},\cdots,\hat{x}_i,\cdots,x_{k+1})]_\h\\
&=&(d_{\rho_{{H}}}f)(x_1,x_2,\cdots,x_{k+1}),
\end{eqnarray*}
which implies that $d_{\rho_{{H}}}f=(-1)^{k-1}d_{{H}}f$.
\end{proof}

At the end of this section, we show that certain homomorphisms between crossed homomorphisms induce homomorphisms between the corresponding cohomology groups.  Let ${H}$ and $\widetilde{H}$ be two crossed homomorphisms from $\g$ to $\h$ with respect to the action $\rho$, and $(\phi_\g,\phi_\h)$ a homomorphism   from $\widetilde{H}$ to ${H}$ in which $\phi_\g$ is invertible.  For all $k\geq 0$, define
 \begin{eqnarray*}
\Phi:&&\Hom(\wedge^k\g,\h)\lon \Hom(\wedge^k\g,\h)\\
 &&f\mapsto \phi_\h\circ f\circ (\phi_\g^{-1})^{\otimes k}.
\end{eqnarray*}

\begin{thm}
   Let ${H}$ and $\widetilde{H}$ be two crossed homomorphisms from $\g$ to $\h$ with respect to the action $\rho$ of $\g$ on $\h$, and $(\phi_\g,\phi_\h)$ a homomorphism   from $\widetilde{H}$ to ${H}$ in which $\phi_\g$ is invertible.  Then the above $\Phi$ is a cochain map from the cochain complex $(\huaC^*(\g,\h),d_{\rho_{\widetilde{H}}})$ to $(\huaC^*(\g,\h),d_{\rho_{{H}}})$. Consequently, $\Phi$ induces a homomorphism $\Phi_*: \widetilde{\huaH}^k(\g,\h)\lon  \huaH^k(\g,\h)$ between corresponding cohomology groups.
\end{thm}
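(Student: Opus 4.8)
The plan is to reduce the whole statement to a single intertwining identity relating the twisted actions $\rho_{\widetilde{H}}$ and $\rho_{{H}}$, and then to obtain the cochain-map property $d_{\rho_{{H}}}\circ\Phi=\Phi\circ d_{\rho_{\widetilde{H}}}$ by a direct substitution. Observe first that the three sums defining $d_{\rho_{{H}}}$ above combine, via \eqref{eq:newrep}, into the Chevalley--Eilenberg coboundary of the representation $\rho_{{H}}$, and likewise for $d_{\rho_{\widetilde{H}}}$; so it suffices to compare the two Chevalley--Eilenberg complexes through $\Phi$.

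First I would establish the key compatibility
\begin{equation}\label{eq:key-intertwine}
\phi_\h\circ\rho_{\widetilde{H}}(x)=\rho_{{H}}(\phi_\g(x))\circ\phi_\h,\qquad\forall x\in\g.
\end{equation}
To verify \eqref{eq:key-intertwine}, I would expand $\rho_{\widetilde{H}}(x)u=\rho(x)u+[\widetilde{H}x,u]_\h$ using \eqref{eq:newrep}, apply $\phi_\h$, and handle the two summands separately. For the first, condition \eqref{homo-2} gives $\phi_\h(\rho(x)u)=\rho(\phi_\g(x))(\phi_\h(u))$. For the second, since $\phi_\h$ is a Lie algebra homomorphism we have $\phi_\h([\widetilde{H}x,u]_\h)=[\phi_\h(\widetilde{H}x),\phi_\h(u)]_\h$, and then \eqref{homo-1} (with $\widetilde{H}$ in the role of $H'$) replaces $\phi_\h(\widetilde{H}x)$ by ${H}(\phi_\g(x))$. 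Adding the two contributions produces $\rho(\phi_\g(x))(\phi_\h(u))+[{H}(\phi_\g(x)),\phi_\h(u)]_\h=\rho_{{H}}(\phi_\g(x))(\phi_\h(u))$, which is exactly \eqref{eq:key-intertwine}.

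Next I would verify the cochain-map property. For $f\in\Hom(\wedge^k\g,\h)$ I evaluate $d_{\rho_{{H}}}(\Phi f)$ on $(x_1,\dots,x_{k+1})$ and perform the substitution $y_l:=\phi_\g^{-1}(x_l)$, so that $x_l=\phi_\g(y_l)$ and $(\Phi f)(x_{i_1},\dots)=\phi_\h\big(f(y_{i_1},\dots)\big)$. In the derivation terms the factor $\rho_{{H}}(x_i)=\rho_{{H}}(\phi_\g(y_i))$ is pulled through $\phi_\h$ using \eqref{eq:key-intertwine}, converting it into $\phi_\h\circ\rho_{\widetilde{H}}(y_i)$; in the bracket terms, since $\phi_\g$ is a Lie algebra homomorphism, $[x_i,x_j]_\g=\phi_\g([y_i,y_j]_\g)$, hence $\phi_\g^{-1}[x_i,x_j]_\g=[y_i,y_j]_\g$. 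The signs $(-1)^{i+1}$ and $(-1)^{i+j}$ are untouched since the substitution preserves the index ordering. Collecting terms, every summand acquires an overall $\phi_\h$ and is expressed entirely in the $y_l$, yielding $\phi_\h\big((d_{\rho_{\widetilde{H}}}f)(y_1,\dots,y_{k+1})\big)$, which is precisely $\big(\Phi(d_{\rho_{\widetilde{H}}}f)\big)(x_1,\dots,x_{k+1})$.

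Finally, a cochain map carries cocycles to cocycles and coboundaries to coboundaries, so $\Phi$ sends the closed (resp. exact) elements of $(\huaC^*(\g,\h),d_{\rho_{\widetilde{H}}})$ into $\huaZ^k(\g,\h)$ (resp. $\huaB^k(\g,\h)$), and therefore descends to the induced homomorphism $\Phi_*:\widetilde{\huaH}^k(\g,\h)\to\huaH^k(\g,\h)$. I expect the only genuine obstacle to be the assembly of \eqref{eq:key-intertwine}: it is the unique point where all three hypotheses---\eqref{homo-1}, \eqref{homo-2}, and $\phi_\h$ being a Lie homomorphism---are used at once, and the invertibility of $\phi_\g$ is exactly what makes the substitution $y_l=\phi_\g^{-1}(x_l)$ legitimate and $\Phi$ well defined in the first place. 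Everything after \eqref{eq:key-intertwine} is routine sign bookkeeping.
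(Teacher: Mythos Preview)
Your proof is correct and follows essentially the same approach as the paper's. The only organizational difference is that you isolate the intertwining relation $\phi_\h\circ\rho_{\widetilde{H}}(x)=\rho_{{H}}(\phi_\g(x))\circ\phi_\h$ as a separate lemma and then invoke it once in the Chevalley--Eilenberg computation, whereas the paper keeps the $\rho$-term and the $[\widetilde{H}\,\cdot,\,\cdot]_\h$-term separate and applies \eqref{homo-1}, \eqref{homo-2}, and the homomorphism property of $\phi_\h$ inline to each; the underlying manipulations are identical.
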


\begin{proof} By the fact that $(\phi_\g,\phi_\h)$ is a homomorphism   from $\widetilde{H}$ to ${H}$, we have
 \begin{eqnarray*}
    && (\Phi (d_{\rho_{\widetilde{{H}}}}f))(x_1,\cdots,x_{k+1})=\phi_\h (d_{\rho_{\widetilde{{H}}}}f)(\phi_\g^{-1} (x_1),\cdots,\phi_\g^{-1} (x_{k+1}))\\
     &=&\sum_{i=1}^{i+1}(-1)^{i+1}\phi_\h\rho(\phi_\g^{-1}(x_i))f(\phi_\g^{-1} (x_1),\cdots,\hat{x}_i,\cdots,\phi_\g^{-1} (x_{k+1}))\\
     &&+\sum_{1\le i< j\le k+1}(-1)^{i+j}\phi_\h f([\phi_\g^{-1} (x_i),\phi_\g^{-1} (x_j)]_{\g},\phi_\g ^{-1} (x_1),\cdots,\hat{x}_i,\cdots,\hat{x}_{j},\cdots,\phi_\g^{-1} (x_{k+1}))\\
&&+\sum_{i=1}^{k+1}(-1)^{i+1}\phi_\h[\widetilde{{H}} \phi_\g^{-1} (x_{i}),f(\phi_\g^{-1} (x_{1}),\cdots,\hat{x}_i,\cdots,\phi_\g^{-1}( x_{k+1}))]_\h\\
&=&\sum_{i=1}^{i+1}(-1)^{i+1}\rho( x_i)\phi_\h f(\phi_\g^{-1} (x_1),\cdots,\hat{x}_i,\cdots,\phi_\g^{-1} (x_{k+1}))\\
     &&+\sum_{1\le i< j\le k+1}(-1)^{i+j}\phi_\h f(\phi_\g^{-1}[ x_i,x_j]_{\g},\phi_\g ^{-1} (x_1),\cdots,\hat{x}_i,\cdots,\hat{x}_{j},\cdots,\phi_\g^{-1} (x_{k+1}))\\
&&+\sum_{i=1}^{k+1}(-1)^{i+1}[{H} (x_{i}),\phi_\h f(\phi_\g^{-1} (x_{1}),\cdots,\hat{x}_i,\cdots,\phi_\g^{-1}( x_{k+1}))]_\h\\
&=& d_{\rho_{{H}}}\Phi (f)(x_1,\cdots,x_{k+1}),
 \end{eqnarray*}
 which implies that $\Phi$ is a cochain map.
\end{proof}

\begin{cor}
Let ${H}$ and $\widetilde{H}$ be two isomorphic crossed homomorphisms. Then the cohomology groups $\widetilde{\huaH}^k(\g,\h)$ and $  \huaH^k(\g,\h)$ are isomorphic for any $k\in\Z_+$.
\end{cor}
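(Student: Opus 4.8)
The plan is to construct an explicit inverse to the induced map $\Phi_*$ by applying the preceding theorem a second time, now to the inverse pair of homomorphisms. Since $H$ and $\widetilde{H}$ are isomorphic, there is an isomorphism $(\phi_\g,\phi_\h)$ from $\widetilde{H}$ to $H$ in which both $\phi_\g$ and $\phi_\h$ are invertible. The key observation is that the pair $(\phi_\g^{-1},\phi_\h^{-1})$ is then itself an isomorphism, from $H$ back to $\widetilde{H}$, so the theorem applies in both directions and the two induced cochain maps will turn out to be mutually inverse already at the cochain level.

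First I would verify that $(\phi_\g^{-1},\phi_\h^{-1})$ satisfies Definition \ref{crossed-homo-homo}. From \eqref{homo-1}, namely $H\circ\phi_\g=\phi_\h\circ\widetilde{H}$, composing with $\phi_\h^{-1}$ on the left and $\phi_\g^{-1}$ on the right yields $\widetilde{H}\circ\phi_\g^{-1}=\phi_\h^{-1}\circ H$, which is exactly \eqref{homo-1} for the inverse pair. Similarly, substituting $x\mapsto\phi_\g^{-1}(x)$ and $u\mapsto\phi_\h^{-1}(u)$ in \eqref{homo-2} and then applying $\phi_\h^{-1}$ gives $\phi_\h^{-1}(\rho(x)u)=\rho(\phi_\g^{-1}(x))(\phi_\h^{-1}(u))$, which is \eqref{homo-2} for $(\phi_\g^{-1},\phi_\h^{-1})$. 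As $\phi_\g^{-1}$ and $\phi_\h^{-1}$ are invertible Lie algebra homomorphisms, the pair $(\phi_\g^{-1},\phi_\h^{-1})$ is an isomorphism from $H$ to $\widetilde{H}$.

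Next I would invoke the preceding theorem twice. For $(\phi_\g,\phi_\h)$ it produces the cochain map $\Phi(f)=\phi_\h\circ f\circ(\phi_\g^{-1})^{\otimes k}$ from $(\huaC^*(\g,\h),d_{\rho_{\widetilde{H}}})$ to $(\huaC^*(\g,\h),d_{\rho_{H}})$, and for $(\phi_\g^{-1},\phi_\h^{-1})$ it produces a cochain map $\Psi(f)=\phi_\h^{-1}\circ f\circ(\phi_\g)^{\otimes k}$ in the opposite direction. A direct computation gives $\Psi(\Phi(f))=\phi_\h^{-1}\circ\phi_\h\circ f\circ(\phi_\g^{-1})^{\otimes k}\circ(\phi_\g)^{\otimes k}=f$, since $(\phi_\g^{-1})^{\otimes k}\circ(\phi_\g)^{\otimes k}=\Id$, and likewise $\Phi(\Psi(f))=f$. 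Hence $\Phi$ and $\Psi$ are mutually inverse isomorphisms of cochain complexes, so on passing to cohomology $\Phi_*$ and $\Psi_*$ are mutually inverse, and $\Phi_*\colon\widetilde{\huaH}^k(\g,\h)\lon\huaH^k(\g,\h)$ is an isomorphism for every $k$. There is no serious obstacle here: the entire argument reduces to the previous theorem together with the formal fact that the inverse pair is again a homomorphism of crossed homomorphisms, and the compatibility $\Psi\circ\Phi=\Id=\Phi\circ\Psi$ is immediate at the cochain level.
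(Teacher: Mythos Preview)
Your proposal is correct and is exactly the intended argument: the paper states this corollary without proof, expecting the reader to observe that when $(\phi_\g,\phi_\h)$ is an isomorphism the inverse pair $(\phi_\g^{-1},\phi_\h^{-1})$ is an isomorphism in the opposite direction, so the preceding theorem applied to both yields mutually inverse cochain maps and hence mutually inverse maps on cohomology.
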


\subsection{Linear deformations of crossed homomorphisms}\label{sec:ld}

In this subsection, we study linear deformations of crossed homomorphisms using the cohomology theory introduced in   Section \ref{sec:deformation2}, and show that isomorphic linear deformations are identified  with the same class in the second cohomology group. We give the notion of a Nijenhuis element associated to a crossed homomorphism, which  gives rise to a trivial deformation.

\begin{defi}
Let ${H}:\g\longrightarrow\h$ be a crossed homomorphism with respect to the action $\rho$ and $\frkH:\g\longrightarrow\h$ a linear map. If  ${H}_t={H}+t\frkH$ is still a crossed homomorphism from $\g$ to $\h$ with respect to the action $\rho$ for all $t$, we say that $\frkH$ generates a {\bf (one-parameter) linear deformation} of the crossed homomorphism ${H}$.
    \end{defi}
It is direct to check that ${H}_t={H}+t\frkH$ is a linear deformation of a crossed homomorphism ${H}$ if and only if
 for any $x,y\in \g$,
\begin{eqnarray}
\label{deformation-1}\rho(x)\frkH y-\rho(y)\frkH x+[{H} x,\frkH y]_\h+[\frkH x,{H} y]_\h-\frkH[x,y]_\g&=&0,\\
\label{deformation-2}[\frkH x,\frkH y]_\h&=&0.
\end{eqnarray}
Note that Eq.~\eqref{deformation-1} means that $\frkH$ is a 1-cocycle of the crossed homomorphism $H$.

\begin{defi} Let ${H}$ be a crossed homomorphism from $\g$ to $\h$ with respect to the action $\rho$.
\begin{itemize}
\item[\rm(i)]Two linear deformations ${H}_t^1={H}+t\frkH_1$ and
${H}^2_t={H}+t\frkH_2$ are said to be {\bf equivalent} if there exists
an $x\in\g$ such that $({\Id}_\g+t\ad_x,{\Id}_\h+t\rho(x))$ is a
homomorphism from ${H}^2_t$ to ${H}^1_t$.

\item[\rm(ii)]A
linear deformation ${H}+t\frkH$ of a
crossed homomorphism ${H}$ is said to be {\bf trivial} if there exists
an $x\in\g$ such that $({\Id}_\g+t\ad_x,{\Id}_\h+t\rho(x))$ is a
homomorphism from ${H}_t$ to ${H}$.
\end{itemize}
\end{defi}

Let $({\Id}_\g+t\ad_x,{\Id}_\h+t\rho(x))$ be a homomorphism from
${H}^2_t$ to ${H}^1_t$. Then ${\Id}_\g+t\ad_x$ and ${\Id}_\h+t\rho(x)$ are Lie algebra
endomorphisms. Thus, we have
\begin{eqnarray*}
({\Id}_\g+t\ad_x)[y,z]_\g&=&[({\Id}_\g+t\ad_x)(y),({\Id}_\g+t\ad_x)(z)]_\g, \;\forall y,z\in \g,\\
({\Id}_\h+t\rho(x))[u,v]_\h&=&[({\Id}_\h+t\rho(x))(u),({\Id}_\h+t\rho(x))(v)]_\h, \;\forall u,v\in \h.
\end{eqnarray*}
which implies that $x$ satisfies
\begin{eqnarray}
\label{eq:Nij1}[[x,y]_\g,[x,z]_\g]_\g=0,\quad \forall y,z\in\g,\\
\label{eq:Nij2}[\rho(x)u,\rho(x)v]_\h=0,\quad \forall u,v\in\h.
\end{eqnarray}

Then by Eq.~\eqref{homo-1}, we get
$$
({H}+t\frkH_1)({\Id}_\g+t\ad_x)(y)=({\Id}_\h+t\rho(x))({H}+t\frkH_2)(y),\quad\forall y\in \g,
$$
which implies
\begin{eqnarray}
 (\frkH_2-\frkH_1)(y)&=&-\rho(y){H} x-[{H} y,{H} x]_\h,\label{eq:deforiso1} \\
  \frkH_1[x,y]_\g&=&\rho(x)(\frkH_2y), \quad \forall y\in \g.
  \label{eq:deforiso2}
\end{eqnarray}
Finally, Eq.~\eqref{homo-2} gives
$$
({\Id}_\h+t\rho(x))\rho(y)(u)=\rho(({\Id}_\g+t\ad_x)(y))({\Id}_\h+t\rho(x))(u),\quad \forall y\in\g, u\in \h,
$$
which implies that $x$ satisfies
\begin{equation}
  \rho([x,y]_\g)\rho(x)=0,\quad\forall y\in\g.\label{eq:Nij3}
\end{equation}

Note that Eq.~\eqref{eq:deforiso1} means that $\frkH_2-\frkH_1=d_{\rho_{{H}}} (-{H} x)$. Thus, we have

\begin{thm}\label{thm:iso3} Let ${H}$ be a crossed homomorphism from $\g$ to $\h$ with respect to the action $\rho$.
  If two linear deformations ${H}^1_t={H}+t\frkH_1$ and ${H}^2_t={H}+t\frkH_2$ are equivalent, then $\frkH_1$ and $\frkH_2$ are in the same cohomology class of $\huaH^1(\g,\h)=\huaZ^1(\g,\h)/\huaB^1(\g,\h)$ defined in
  Definition~\ref{de:opcoh}.
\end{thm}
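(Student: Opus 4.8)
The plan is to show directly that $\frkH_1$ and $\frkH_2$ are both $1$-cocycles of the crossed homomorphism ${H}$ whose difference is a $1$-coboundary, so that they represent the same class in $\huaH^1(\g,\h)$.

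First I would record that each $\frkH_i$ is a cocycle. Since $\frkH_i$ generates a linear deformation ${H}^i_t={H}+t\frkH_i$, it satisfies \eqref{deformation-1}, and this equation is precisely the statement $d_{\rho_{{H}}}\frkH_i=0$; hence $\frkH_1,\frkH_2\in\huaZ^1(\g,\h)$. Only the degree-one constraint \eqref{deformation-1} is needed here; the quadratic condition \eqref{deformation-2}, while part of being a deformation, plays no role in the cohomological comparison.

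Next I would invoke the equivalence hypothesis. By definition there is an $x\in\g$ for which $({\Id}_\g+t\ad_x,{\Id}_\h+t\rho(x))$ is a homomorphism from ${H}^2_t$ to ${H}^1_t$. Unwinding the homomorphism condition \eqref{homo-1}, as already carried out immediately before the statement, yields the identity \eqref{eq:deforiso1}, namely
$$(\frkH_2-\frkH_1)(y)=-\rho(y){H} x-[{H} y,{H} x]_\h,\qquad\forall y\in\g.$$

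The crux is then to recognize the right-hand side as a coboundary. Viewing $-{H} x\in\h=\huaC^0(\g,\h)$ as a degree-zero cochain and applying the Chevalley--Eilenberg operator $d_{\rho_{{H}}}$ associated to the representation $\rho_{{H}}$ of \eqref{eq:newrep}, one computes $d_{\rho_{{H}}}(-{H} x)(y)=\rho_{{H}}(y)(-{H} x)=-\rho(y)({H} x)-[{H} y,{H} x]_\h$, which matches the displayed expression exactly. Therefore $\frkH_2-\frkH_1=d_{\rho_{{H}}}(-{H} x)\in\huaB^1(\g,\h)$, and combined with the cocycle property of both maps this gives $[\frkH_1]=[\frkH_2]$ in $\huaH^1(\g,\h)=\huaZ^1(\g,\h)/\huaB^1(\g,\h)$. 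I expect no serious obstacle: the substantive computation, the passage from the homomorphism axioms to \eqref{eq:deforiso1}, has already been performed, so the remaining task is only the bookkeeping identification of that difference with the coboundary of $-{H} x$, together with noting that the deformation axiom \eqref{deformation-1} is literally the $1$-cocycle condition.
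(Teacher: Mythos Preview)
Your proposal is correct and follows essentially the same approach as the paper: the paper derives \eqref{eq:deforiso1} from the homomorphism condition and then simply notes that this equation says $\frkH_2-\frkH_1=d_{\rho_{{H}}}(-{H} x)$, which is exactly your argument. Your write-up is in fact slightly more explicit, since you also spell out that \eqref{deformation-1} is the $1$-cocycle condition for each $\frkH_i$ and verify the coboundary identification by computing $d_{\rho_{{H}}}(-{H} x)(y)$ directly.
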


\begin{defi}
Let ${H}$ be a crossed homomorphism from $\g$ to $\h$ with respect to the action $\rho$. An element $x\in\g$ is called a {\bf Nijenhuis element} associated to ${H}$ if $x$ satisfies Eqs.~\eqref{eq:Nij1}, \eqref{eq:Nij2}, \eqref{eq:Nij3} and the equation
      \begin{eqnarray}
         \rho(x)\Big(\rho(y){H} x+[{H} y,{H} x]_\h\Big)=0,\quad\forall y\in\g.
         \label{eq:Nij4}
        \end{eqnarray}
   Denote by $\Nij({H})$ the set of Nijenhuis elements associated to a crossed homomorphism ${H}$.
    \end{defi}

By Eqs.~\eqref{eq:Nij1}-\eqref{eq:Nij3}, it is obvious that a
trivial linear deformation gives rise to a
Nijenhuis element. The following result is in close analogue to
the fact that the differential of a Nijenhuis operator on a Lie
algebra generates a trivial linear
deformation of the Lie algebra~\cite{Do}, justifying the notion of
Nijenhuis elements.

\begin{thm}\label{thm:trivial}
   Let ${H}$ be a crossed homomorphism from $\g$ to $\h$ with respect to the action $\rho$. Then for any  $x\in \Nij({H})$, ${H}_t:={H}+t \frkH$ with $\frkH:=d_{\rho_{H}} (-{H} x)$ is a  linear  deformation of the crossed homomorphism ${H}$. Moreover, this deformation is trivial.
\end{thm}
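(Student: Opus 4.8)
The plan is to verify the two defining equations \eqref{deformation-1} and \eqref{deformation-2} for the candidate direction $\frkH:=d_{\rho_{{H}}}(-{H}x)$, and then to exhibit $(\Id_\g+t\ad_x,\Id_\h+t\rho(x))$ as a homomorphism from ${H}_t$ to ${H}$. First I would compute $\frkH$ explicitly: since $-{H}x$ is a $0$-cochain, the formula for $d_{\rho_{{H}}}$ in degree $0$ gives
$$\frkH(y)=\rho(y)(-{H}x)+[{H}y,-{H}x]_\h=-\rho(y)({H}x)-[{H}y,{H}x]_\h=-\rho_{{H}}(y)({H}x),\quad\forall y\in\g.$$
Equation \eqref{deformation-1} asserts precisely that $\frkH$ is a $1$-cocycle, i.e. $d_{\rho_{{H}}}\frkH=0$; this is immediate because $\frkH=d_{\rho_{{H}}}(-{H}x)$ is a coboundary and $d_{\rho_{{H}}}^2=0$ (the Chevalley--Eilenberg coboundary of the representation $(\h;\rho_{{H}})$). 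Hence only \eqref{deformation-2}, namely $[\frkH y,\frkH z]_\h=0$, remains, and this is the heart of the matter.

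To handle \eqref{deformation-2} I would work inside the semidirect product $\g\ltimes_\rho\h$ and use Theorem \ref{twitst-iso}(b). Set $\phi_\g=\Id_\g+t\ad_x$, $\phi_\h=\Id_\h+t\rho(x)$, and $\Psi_t=(\phi_\g,\phi_\h)\colon\g\ltimes_\rho\h\to\g\ltimes_\rho\h$. Using the Jacobi identity together with \eqref{eq:Nij1} one checks $\phi_\g$ is a Lie endomorphism of $\g$; using that $\rho(x)\in\Der(\h)$ together with \eqref{eq:Nij2}, that $\phi_\h$ is a Lie endomorphism of $\h$; and expanding $[\cdot,\cdot]_\rho$, one finds $\Psi_t$ is a Lie endomorphism of $\g\ltimes_\rho\h$ precisely when in addition the mixed relation $\phi_\h(\rho(y)u)=\rho(\phi_\g y)(\phi_\h u)$ holds, whose order-$t^2$ term is exactly \eqref{eq:Nij3} (this is \eqref{homo-2}). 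Next, a direct expansion shows ${H}\circ\phi_\g=\phi_\h\circ {H}_t$, i.e. \eqref{homo-1}: the order-$t$ term is ${H}[x,y]_\g=\frkH y+\rho(x)({H}y)$, which is just \eqref{crossed-homo} for ${H}$ at the pair $(x,y)$, and the order-$t^2$ term is $\rho(x)\frkH y=0$, which is \eqref{eq:Nij4}.

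Rewriting \eqref{homo-1} as $\iota_{{H}}\circ\phi_\g=\Psi_t\circ\iota_{{H}_t}$, and noting that $\iota_{{H}}$ (by Theorem \ref{twitst-iso}(b)) and $\phi_\g$ are Lie homomorphisms, the left-hand side is a Lie homomorphism; since $\Psi_t$ is also a Lie homomorphism, applying $\Psi_t$ to the defect $\iota_{{H}_t}[y,z]_\g-[\iota_{{H}_t}y,\iota_{{H}_t}z]_\rho$ shows this defect lies in $\ker\Psi_t$. The defect has vanishing $\g$-component, so it equals $(0,F(t))$ with $F(t)={H}_t[y,z]_\g-\rho(y){H}_tz+\rho(z){H}_ty-[{H}_ty,{H}_tz]_\h\in\h$, and $\Psi_t(0,F(t))=(0,(\Id_\h+t\rho(x))F(t))=0$ forces $(\Id_\h+t\rho(x))F(t)=0$ as an identity in $t$. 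Expanding $F(t)=F_0+tF_1+t^2F_2$, the term $F_0$ vanishes because ${H}$ is a crossed homomorphism and $F_1$ vanishes because $\frkH$ is a $1$-cocycle (already established), while $F_2=-[\frkH y,\frkH z]_\h$. Thus $(\Id_\h+t\rho(x))(t^2F_2)=0$, and reading off the coefficient of $t^2$ yields $F_2=0$, i.e. \eqref{deformation-2}. Together with \eqref{deformation-1} this shows ${H}_t={H}+t\frkH$ is a crossed homomorphism for every $t$, so $\frkH$ generates a linear deformation. Finally, having verified that $\phi_\g,\phi_\h$ are Lie homomorphisms (by \eqref{eq:Nij1}, \eqref{eq:Nij2}) and that \eqref{homo-1} and \eqref{homo-2} hold (the latter from \eqref{eq:Nij3}), the pair $(\Id_\g+t\ad_x,\Id_\h+t\rho(x))$ is by definition a homomorphism from ${H}_t$ to ${H}$, so the deformation is trivial.

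I expect the main obstacle to be \eqref{deformation-2}, the vanishing $[\frkH y,\frkH z]_\h=0$. A head-on expansion entangles all four Nijenhuis conditions awkwardly; the semidirect-product argument above is what makes \eqref{eq:Nij1}--\eqref{eq:Nij4} enter cleanly and simultaneously, and it has the bonus of delivering triviality essentially for free once \eqref{deformation-2} is in hand.
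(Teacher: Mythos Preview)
Your argument is correct and follows essentially the same path as the paper: both proofs verify, from the Nijenhuis conditions \eqref{eq:Nij1}--\eqref{eq:Nij4}, that $\phi_\g=\Id_\g+t\ad_x$ and $\phi_\h=\Id_\h+t\rho(x)$ are Lie algebra endomorphisms satisfying \eqref{homo-1} and \eqref{homo-2}, and then deduce that ${H}_t$ is a crossed homomorphism for every $t$ and that the pair $(\phi_\g,\phi_\h)$ witnesses triviality.

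The one technical difference is in how the crossed-homomorphism property of ${H}_t$ is extracted. The paper invokes invertibility of $\phi_\g,\phi_\h$ for $t$ sufficiently small, applies Lemma~\ref{lem:isomorphism} to write ${H}_t=\phi_\h^{-1}\circ{H}\circ\phi_\g$, and then argues that since \eqref{deformation-1}--\eqref{deformation-2} are polynomial conditions in $t$ that hold for small $t$, they hold for all $t$. You instead read off the coefficients of $t$ directly from the polynomial identity $(\Id_\h+t\rho(x))F(t)=0$, which avoids both the ``sufficiently small $t$'' detour and the auxiliary Lemma~\ref{lem:isomorphism}. Your route is a bit more self-contained; the paper's route makes the conceptual point (pullback of a crossed homomorphism along compatible isomorphisms is again a crossed homomorphism) explicit as a separate lemma. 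Either packaging is fine.
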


We need the following lemma to prove this theorem.

\begin{lem}\label{lem:isomorphism}
Let ${H}$ be a crossed homomorphism from $\g$ to $\h$ with respect to the action $\rho$.  Let $\phi_\g:\g\longrightarrow\g$ and $\phi_\h:\h\longrightarrow \h$ be Lie algebra isomorphisms such that Eq.~\eqref{homo-2} holds. Then $\phi_\h^{-1}\circ{H}\circ\phi_\g$ is a crossed homomorphism from $\g$ to $\h$ with respect to the action $\rho$.
\end{lem}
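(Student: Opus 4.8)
The plan is to verify directly that the linear map $H' := \phi_\h^{-1}\circ H\circ\phi_\g$ satisfies the defining identity \eqref{crossed-homo} of a crossed homomorphism with respect to $\rho$. The whole argument rests on recasting the compatibility hypothesis \eqref{homo-2} as an intertwining relation between the action and the isomorphism $\phi_\h$. Reading \eqref{homo-2} as an equality of operators on $\h$ gives $\phi_\h\circ\rho(x)=\rho(\phi_\g(x))\circ\phi_\h$ for every $x\in\g$; since $\phi_\h$ is invertible, conjugating yields the key identity
$$\phi_\h^{-1}\circ\rho(\phi_\g(x))=\rho(x)\circ\phi_\h^{-1},\qquad\forall x\in\g,$$
which I will use to push $\phi_\h^{-1}$ through the action.

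First I would compute the left-hand side of \eqref{crossed-homo} for $H'$. Using that $\phi_\g$ is a Lie algebra homomorphism, $H'[x,y]_\g=\phi_\h^{-1}H[\phi_\g x,\phi_\g y]_\g$. Applying the crossed homomorphism identity for $H$ at the pair $(\phi_\g x,\phi_\g y)$ and then applying $\phi_\h^{-1}$ expresses this as a sum of three terms. For the two terms involving $\rho$, the intertwining identity above converts $\phi_\h^{-1}\rho(\phi_\g x)$ into $\rho(x)\phi_\h^{-1}$, producing $\rho(x)(H'y)$ and $\rho(y)(H'x)$. For the bracket term, I would use that $\phi_\h^{-1}$ is again a Lie algebra homomorphism to obtain $\phi_\h^{-1}[H\phi_\g x,H\phi_\g y]_\h=[H'x,H'y]_\h$. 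Collecting the three contributions gives precisely $\rho(x)(H'y)-\rho(y)(H'x)+[H'x,H'y]_\h$, which is \eqref{crossed-homo} for $H'$.

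There is no serious obstacle here: the statement is an elementary conjugation/naturality argument, and the only point needing care is bookkeeping the directions of the two intertwining relations (that $\phi_\h$ intertwines $\rho(x)$ with $\rho(\phi_\g x)$, while $\phi_\h^{-1}$ intertwines $\rho(\phi_\g x)$ with $\rho(x)$), so that the correct arguments $x$ and $y$ reappear after all the maps are pushed through. I expect the verification to be only a few lines once the identity $\phi_\h^{-1}\circ\rho(\phi_\g(x))=\rho(x)\circ\phi_\h^{-1}$ is in place, and the result will serve as the technical input for the triviality claim of Theorem~\ref{thm:trivial}, where one takes $\phi_\g={\Id}_\g+t\ad_x$ and $\phi_\h={\Id}_\h+t\rho(x)$.
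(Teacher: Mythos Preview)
Your proposal is correct and is exactly the direct verification the paper has in mind; the paper's own proof consists of the single line ``It follows from straightforward computations.'' Your identification of the key intertwining identity $\phi_\h^{-1}\circ\rho(\phi_\g(x))=\rho(x)\circ\phi_\h^{-1}$ and the three-term check is precisely how that straightforward computation goes.
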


\begin{proof}
 It follows from
straightforward computations.
\end{proof}

{\bf The proof of Theorem \ref{thm:trivial}:}
For any Nijenhuis element $x\in\Nij({H})$, we define
 \begin{eqnarray}\label{trivial-genetator}
 \frkH=d_{{H}} (-{H} x).
 \end{eqnarray}
By the definition of Nijenhuis elements of ${H}$, for any $t$, ${H}_t={H}+t\frkH$ satisfies
\begin{eqnarray*}
        {H}\circ \Big({\Id}_\g+t\ad_x\Big)&=&\Big({\Id}_\h+t\rho(x)\Big)\circ {H}_t,\\
       \Big({\Id}_\h+t\rho(x)\Big)\circ\rho(y)&=&\rho\Big(({\Id}_\g+t\ad_x)(y)\Big)\circ\Big({\Id}_\h+t\rho(x)\Big),\quad\forall y\in\g.
\end{eqnarray*}
 For $t$ sufficiently small, we see that ${\Id}_\g+t\ad_x$ and ${\Id}_\h+t\rho(x)$ are Lie algebra isomorphisms. Thus, we have
$${H}_t=\Big({\Id}_\h+t\rho(x)\Big)^{-1}\circ{H}\circ \Big({\Id}_\g+t\ad_x\Big).$$
By Lemma \ref{lem:isomorphism},
we deduce that ${H}_t$ is a crossed homomorphism from $\g$ to $\h$, for $t$ sufficiently small.
  Thus, $\frkH$
  given by
Eq.~\eqref{trivial-genetator} satisfies the conditions
\eqref{deformation-1} and \eqref{deformation-2}. Therefore,
${H}_t$ is a crossed homomorphism for all
$t$, which means that $\frkH$
given by
Eq.~\eqref{trivial-genetator} generates a deformation. It is straightforward to see  that
this deformation is trivial.\qed

\vspace{2mm}

It is generally not easy to  find  Nijenhuis elements  associated to a crossed homomorphism ${H}$  from a Lie algebra $\g$ to $\h$. Next  we give  examples  on some special  Lie algebras where the Nijenhuis elements
can be explicitly determined.

\begin{ex}{\rm
Let $\g$ be a 2-step nilpotent Lie algebra, i.e.,  $[\g,[\g,\g] ]=0$, and $H:\g\to \g$ a crossed homomorphism with respect to the adjoint action ad of $\g$ on $\g$. It is easy to see  that (\ref{eq:Nij1}), (\ref{eq:Nij2}), (\ref{eq:Nij3}), (\ref{eq:Nij4}) hold for any $x\in\g$. Therefore   $\Nij({H})=\g$ for  any crossed homomorphism $H$ with respect to the adjoint action ad of $\g$ on $\g$.
For example we can take $\g$ to be any Heisenberg algebra.\qed
}
\end{ex}

    \begin{ex}
    \label{ex:2dim}
    {\rm
    Consider the unique $2$-dimensional non-abelian Lie algebra on $\mathbb C^2$. The  Lie bracket is given by
   $[e_1,e_2]=e_1$ for a given basis $\{e_1,e_2\}$.
   For a matrix $\left(\begin{array}{cc}a_{11}&a_{12}\\
a_{21}&a_{22}\end{array}\right)$,
define
$$
   {H} e_1=a_{11}e_1+a_{21}e_2,\quad  {H} e_2=a_{12}e_1+a_{22}e_2.
$$
 ${H}$ is a crossed homomorphism from $\mathbb C^2$ to $\mathbb C^2$ with respect to the adjoint action if and only if
   $$
  {H}[e_1,e_2]=[{H} e_1,e_2]+[e_1,{H} e_2]+[{H} e_1,{H} e_2].
   $$
   By a straightforward computation, we conclude that
  ${H}$ is a crossed homomorphism if and only if
  $
  a_{21}=0, ~(1+a_{11}) a_{22}=0.
  $
So we have the following two cases to consider.

\noindent
(i) If $a_{22}=0$, then we deduce that any ${H}=\left(\begin{array}{cc}a_{11}&a_{12}\\
   0&0\end{array}\right)$ is a crossed homomorphism. In this case, $x=t_1e_1+t_2e_2$ is a Nijenhuis element of ${H}$ if and only if
   $
   t_2(t_1a_{11}+t_2a_{12})=0.
   $
Then for any $t_1\in\mathbb C,$ $t_1e_1$ is a Nijenhuis element for the crossed homomorphism ${H}=\left(\begin{array}{cc}a_{11}&a_{12}\\
   0&0\end{array}\right)$.

\noindent
(ii) If $1+a_{11}=0$, then we deduce that any ${H}=\left(\begin{array}{cc}-1&a_{12}\\
   0&a_{22}\end{array}\right)$ is a crossed homomorphism. In this case, $x=t_1e_1+t_2e_2$ is a Nijenhuis element of ${H}$ if and only if
   $
   t_2(t_2a_{12}-t_1a_{22}-t_1)=0.
   $
   In particular, $e_1+e_2$ is a Nijenhuis element for the crossed homomorphism ${H}=\left(\begin{array}{cc}-1&2\\
   0&1\end{array}\right)$.\qed
   }
    \end{ex}

\begin{ex}{\rm For any crossed homomorphism $H$  from  a finite dimensional semisimple Lie algebra $\g$ over $\C$ to another Lie algebra $\h$  with respect to any action $\rho$, we claim that   $\Nij(H)=0$.

Let $x\in\g$ be a fixed nonzero vector  and assume that $\g_0=[x, \g]$ is abelian, i.e. \eqref{eq:Nij1} holds. We will show that this is impossible.

Denote $n=\dim\g, \g_x=\{y\in\g:[x,y]=0\}$. Considering the linear map $\ad(x):\g\to \g$, we see that $\dim\g_0+\dim\g_x=n$. Let $(\cdot, \cdot)$ be a nondegenerate invariant bilinear form on $\g$. It is easy to see that $\g_x=\g_0^\perp$.
From $0=(0, \g)=([[x, \g], [x, \g]], \g)= ([x, \g],[[x, \g], \g])$ we have
$$[[x, \g], \g]\subset \g_0^\perp=\g_x.$$
We deduce that
$0=[x, [[x, \g], \g]]]=[[x, [x, \g]], \g]$. Since $\g$ is semisimple we see that $[x, [x, \g]]=0$. Thus $x$ is nilpotent.  From Jacobson-Morozov theorem, there are elements $f, h\in \mathfrak{g}$ such that
$$[h, x]=2x, \quad[h, f]=-2f,\quad [x, f]=h.$$
We see that
$[[x, h], [x, f]]= 4x\ne0.$
So $\g_0$ is noncommutative which is a contradiction. Therefore $\Nij(H)=0$.\qed
}
\end{ex}

\section{Conclusion}\label{Sect.6}

We introduce the notions of weak representations of Lie-Rinehart algebras and admissible representations of Leibniz pairs.
By using crossed homomorphisms between Lie algebras, we construct two actions of the monoidal category of representations of Lie algebras   on the category of weak representations  of Lie-Rinehart algebras and the category of admissible representations of Leibniz pairs respectively. In particular, the corresponding bifunctors, called the actions of monoidal categories,   unify and generalize various constructions of modules over certain Cartan type Lie algebras.  New representations of some Lie algebras are also constructed using the actions of monoidal categories. To better understand  crossed homomorphisms and the actions of monoidal categories, we also give a systematic study of deformations and cohomologies of crossed homomorphisms.

There are some natural questions worthy to be considered in the future:
\begin{itemize}
  \item[{\rm(i)}] Whether  the bifunctors $F_H$ and $\huaF_H$  preserve certain properties of representations. For example, when $F_H(V,M) $ and $\huaF_H(V,M)$ are simple if both   $V$ and $M$ are simple?

      \item[{\rm(ii)}] For two crossed homomorphisms $H$ and $H'$, under what conditions the bifunctors $F_H$ and $F_{H'}$ are naturally  isomorphic?

      \item[{\rm(iii)}] How to classify  simple objects in the categories $\WRep_\K(\L)$ and $\ARep_\K(\huaS)$ under certain conditions?
\end{itemize}
\vspace{2mm}
\noindent
{\bf Acknowledgements. } This research is supported by NSFC (11871190,11971315,11922110) and NSERC (311907-2020). We would like to thank L. Guo and S. Ng for help discussions on the first version of the paper.

%\begin{thebibliography}{abcdsfgh}

 \end{document}